\theoremstyle{plain}
\newtheorem{theorem}{Theorem}[section]
\newtheorem{lemma}[theorem]{Lemma}
\newtheorem{proposition}[theorem]{Proposition}
\newtheorem{corollary}[theorem]{Corollary}
\newtheorem{construction}[theorem]{Construction}
\newtheorem{situation}[theorem]{Situation}
\theoremstyle{definition}
\newtheorem{definition}[theorem]{Definition}
\newtheorem{example}[theorem]{Example}
\theoremstyle{remark}
\newtheorem{remark}[theorem]{Remark}
\newcommand{\Rom}[1]{\uppercase\expandafter{\romannumeral#1}}
\newcommand{\A}{\mathcal A}
\newcommand{\E}{\mathcal E}
\newcommand{\F}{\mathcal F}
\renewcommand{\O}{\mathcal O}
\newcommand{\ZZ}{\mathbb Z}
\newcommand{\fil}{\mathrm{Fil}}
\newcommand{\gr}{\mathrm{gr}}
\DeclareMathOperator{\Tr}{Tr}
\newcommand{\GL}{\operatorname{GL}}
\DeclareMathOperator{\Fil}{Fil}
\DeclareMathOperator{\im}{Im}
\DeclareMathOperator{\Lotimes}{\otimes^{\bf{L}}}
\DeclareMathOperator{\Mod}{Mod}
\DeclareMathOperator{\rank}{rank}
\DeclareMathOperator{\Tot}{Tot}
\newcommand{\blank}{\underline{\,\,\,\,}} 
\DeclareMathOperator{\Hom}{Hom}
\DeclareMathOperator{\localRHom}{R \mathcal{H} om}
\DeclareMathOperator{\localHom}{\mathcal{H} om}
\DeclareMathOperator{\Ext}{Ext}
\DeclareMathOperator{\Tor}{Tor}
\DeclareMathOperator{\CF}{CF_b}
\DeclareMathOperator{\CFD}{CFD_b}
\DeclareMathOperator{\DF}{DF_b}
\DeclareMathOperator{\DFD}{DFD_b}
\DeclareMathOperator{\FAc}{FAc_b}
\DeclareMathOperator{\FDAc}{FDAc_b}
\DeclareMathOperator{\FDQis}{FDQis_b}
\DeclareMathOperator{\FQis}{FQis_b}
\DeclareMathOperator{\KF}{KF_b}
\DeclareMathOperator{\KFD}{KFD_b}
\DeclareMathOperator{\Spec}{Spec}
\DeclareMathOperator{\id}{id}
\DeclareMathOperator{\Lie}{Lie}
\DeclareMathOperator{\QCoh}{QCoh}
\DeclarePairedDelimiter\abs{\lvert}{\rvert}
\let\oldabs\abs
\def\abs{\@ifstar{\oldabs}{\oldabs*}}
\theoremstyle{definition} 
\newcommand{\thistheoremname}{}
\newtheorem*{genericthm}{\thistheoremname}
  \newcommand{\mb}{\mathbb}
\newcommand{\mc}{\mathcal}
\renewcommand{\O}{\mc{O}}
\newcommand{\eps}{\epsilon}
\newcommand*{\sheafhom}{\mathcal{H}\kern -.5pt om}
\newcommand{\be}{\begin{equation*}}
\newcommand{\ee}{\end{equation*}}
\newcommand{\bel}{\begin{equation}}
\newcommand{\eel}{\end{equation}}
\newcommand{\bea}{\begin{eqnarray}}
\newcommand{\eea}{\end{eqnarray}}
\newcommand{\ben}{\begin{enumerate}}
\newcommand{\een}{\end{enumerate}}
\newcommand{\bi}{\begin{itemize}}
\newcommand{\ei}{\end{itemize}}
\newcommand{\gw}{\omega}
\newcommand{\ep}{\epsilon}
\newcommand{\bt}{\boxtimes}
\newcommand{\tten}{\otimes^{\mb{L}}}
\newcommand{\ebe}{\mathcal{E}\boxtimes \mathcal{E}^*}
\newcommand{\rg}{R\Gamma}
\newcommand{\cA}{\mathcal{A}}
\newcommand{\cC}{\mathcal{C}}
\newcommand{\cE}{\mathcal{E}}
\newcommand{\cF}{\mathcal{F}}
\newcommand{\cG}{\mathcal{G}}
\newcommand{\cH}{\mathcal{H}}
\newcommand{\cO}{\mathcal{O}}
\newcommand{\cP}{\mathcal{P}}
\newcommand{\cS}{\mathcal{S}}
\newcommand{\dd}[1]{\frac{\partial}{\partial #1}}
\newcommand{\ddd}[2]{\frac{\partial #1}{\partial #2}}
\title{Duality of differential operators and algebraic de Rham cohomology}
\author{Caleb Ji}
\address{Caleb Ji \\
    Department of Mathematics \\
	Columbia University \\
	New York, NY 10027 }
\email{cj2670@columbia.edu}
\author{Casimir Kothari}
\address{Casimir Kothari \\
    Department of Mathematics \\
	University of Chicago \\
	Chicago, IL 60637 }
\email{ckothari@uchicago.edu}
\author{Oliver Li}
\address{Oliver Li \\
    Department of Mathematics \\
	University of Melbourne \\
	Parkville VIC 3052, Australia }
\email{oliver.li@student.unimelb.edu.au}
\author{Svetlana Makarova}
\address{Svetlana Makarova \\
    Department of Mathematics \\
	Australian National University \\
	Canberra ACT 2601, Australia }
\email{svetlana.makarova@anu.edu.au}
\author{Shubhankar Sahai}
\address{Shubhankar Sahai \\
    Department of Mathematics \\
	University of California San Diego \\
	La Jolla, CA 92093 }
\email{ssahai@ucsd.edu}
\author{Sridhar Venkatesh}
\address{Sridhar Venkatesh \\
    Department of Mathematics \\
	University of Michigan \\
	Ann Arbor, MI 48109}
\email{srivenk@umich.edu}
\date{\today}
\begin{document}

\begin{abstract}
    Given a smooth proper morphism $f\colon X\rightarrow S$, we introduce a certain derived category where morphisms are permitted to be $\O_S$-linear differential operators. 
 We then prove a generalisation of Serre duality that applies to two-term complexes of this type.  We apply this to give a new proof of Poincar\'e duality for relative algebraic de Rham cohomology. 
\end{abstract}

\maketitle

\setcounter{tocdepth}{1}
\tableofcontents

\section{Introduction} 
Let $X$ be a smooth proper variety of dimension $n$ over a field of characteristic 0. 
Poincar\'e duality for algebraic de Rham cohomology for $X$, proven by Hartshorne in \cite{Hartshorne}*{Corollary II.5.3}, gives an isomorphism 
\[
H^k(X, \Omega_{X}^{\bullet})\cong H^{2n-k}(X, \Omega_{X}^{\bullet})^\vee. 
\]
Recalling the fact that for such schemes, the dualising complex is isomorphic to $\Omega^n_X$,  one can notice that this statement is reminiscent of Serre duality.
However, its direct application is prevented by the fact that the differentials occurring in the algebraic de Rham complex are not $\O_X$-linear.  To resolve this issue, Hartshorne shows in his proof that Serre duality is compatible with the differentials in a suitable sense. 

We now know that Poincar\'e duality for algebraic de Rham cohomology holds in much greater generality.  In fact, a relative version holds for any quasi-compact quasi-separated base. 

\begin{theorem}
\cite[\href{https://stacks.math.columbia.edu/tag/0G8K}{Tag 0G8K}]{stacks-project} 
Let $S$ be a quasi-compact and quasi-separated scheme. Let $f \colon X \to S$
be a proper smooth morphism of schemes all of whose fibres are nonempty
and equidimensional of dimension $n$. Then there exists an
$\mathcal{O}_S$-module map
$$
t \colon R^{2n}f_*\Omega^\bullet_{X/S} \longrightarrow \mathcal{O}_S,
$$
unique up to precomposing by multiplication by a unit of
$H^0(X, \mathcal{O}_X)$,
such that the pairing
$$
Rf_*\Omega^\bullet_{X/S}
\otimes_{\mathcal{O}_S}^\mathbb{L}
Rf_*\Omega^\bullet_{X/S}[2n]
\longrightarrow
\mathcal{O}_S, \quad
(\xi, \xi') \longmapsto t(\xi \cup \xi')
$$
is a perfect pairing of perfect complexes on $S$.
\end{theorem}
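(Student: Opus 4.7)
The plan is to derive the theorem from the generalised Serre duality for two-term complexes of differential operators proved earlier in the paper, by recognising $\Omega^\bullet_{X/S}$ as essentially self-dual under Serre duality \emph{compatibly} with its non-$\O_X$-linear differentials. The trace $t$ and the claimed perfect pairing on $Rf_*\Omega^\bullet_{X/S}$ would then fall out of this self-duality.

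First I would set up the termwise picture. Each $\Omega^i_{X/S}$ is locally free of finite rank, and the wedge pairing $\Omega^i_{X/S} \otimes_{\O_X} \Omega^{n-i}_{X/S} \to \Omega^n_{X/S} = \omega_{X/S}$ identifies $\Omega^{n-i}_{X/S}$ with the $\O_X$-linear Serre dual of $\Omega^i_{X/S}$, so classical relative Serre duality produces perfect pairings
\[
Rf_*\Omega^i_{X/S} \otimes_{\O_S}^{\mathbb{L}} Rf_*\Omega^{n-i}_{X/S}[n] \longrightarrow \O_S,
\]
which pair $H^p(X,\Omega^q_{X/S})$ with $H^{n-p}(X,\Omega^{n-q}_{X/S})$ so that total degrees sum to $2n$, as required for the shift in the theorem. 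The crucial step is to promote these into a pairing of the whole de Rham complex via the paper's two-term duality: for $[\cF \xrightarrow{D} \cG]$ with $D$ an $\O_S$-linear differential operator, the generalised Serre duality should produce a dual two-term complex $[\cG^\vee \xrightarrow{D^\star} \cF^\vee]$ with a canonical transpose $D^\star$. The key compatibility I would verify is that the transpose of $d \colon \Omega^i_{X/S} \to \Omega^{i+1}_{X/S}$, under the wedge-product identification of Serre duals, is, up to a sign, the de Rham differential $d \colon \Omega^{n-i-1}_{X/S} \to \Omega^{n-i}_{X/S}$ — a local calculation computing the formal adjoint of exterior differentiation.

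Finally, I would assemble the two-term self-dualities into a self-duality of the full de Rham complex by induction on its length, using the stupid-truncation triangles
\[
\sigma^{\geq i+1}\Omega^\bullet_{X/S} \longrightarrow \sigma^{\geq i}\Omega^\bullet_{X/S} \longrightarrow \Omega^i_{X/S}[-i]
\]
to glue dualities at each step, with properness of $f$ ensuring the resulting complexes are perfect over $S$. The main obstacle is precisely the compatibility check for adjoints together with the careful sign and shift bookkeeping in the inductive gluing; the whole point of the paper's differential-operator derived category is presumably to make the comparison of adjoints a formal consequence of the two-term duality rather than a hands-on computation repeated at every length, so the hard labour of Hartshorne's original argument is absorbed into the abstract duality statement.
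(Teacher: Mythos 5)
Your strategy is genuinely different from the paper's, and considerably more ambitious than what the paper actually establishes. The paper does not reprove the theorem from scratch: its new content is a proof of the single input Lemma \ref{lemma: key} (that $d^{n-1}\colon R^nf_*\Omega^{n-1}_{X/S}\to R^nf_*\Omega^n_{X/S}$ vanishes), obtained by exhibiting $d^{n-1}$ as the Serre dual of $d^0\colon f_*\O_X\to f_*\Omega^1_{X/S}$ via a duality theory for \emph{two-term} complexes $[\cE\xrightarrow{D}\cF]$ of differential operators (Theorem \ref{thm: copairing} applied to $[\O_X\to\Omega^1_{X/S}]$); the full Poincar\'e duality statement is then deduced exactly as in the Stacks project, via the Hodge--de Rham spectral sequence. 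Your plan --- a direct self-duality of all of $Rf_*\Omega^\bullet_{X/S}$ by induction on stupid truncations --- is the natural extension, and one the authors themselves gesture at as future work, but it is not what the paper's machinery delivers.

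The concrete gaps are these. First, every tool in the paper (the copairing class $\tau$, the cocycle condition $(D\boxtimes 1)\tau_{\cE}=(1\boxtimes D^*)\tau_{\cF}$ of Proposition \ref{proposition: equality}, the hypercohomology argument of Proposition \ref{prop: hyperrep}, and Theorem \ref{thm: copairing} itself) is stated and proved only for two-term complexes; your induction needs a duality between the multi-term truncations $\sigma^{\geq i}\Omega^\bullet_{X/S}$ and $\sigma^{\leq n-i}\Omega^\bullet_{X/S}$ at every stage, which requires extending all of this to $(n+1)$-term complexes --- in particular verifying the cocycle condition for $\sum_i\tau_{\Omega^i}$ against every differential simultaneously and rerunning the spectral-sequence argument. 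Second, gluing dualities along the truncation triangles is not formal: given perfect pairings on the outer terms of two triangles one must (i) construct a pairing on the middle terms restricting to the given ones and (ii) prove the connecting morphisms are mutually adjoint; item (ii) at the extreme steps is precisely the duality of $d^0$ and $d^{n-1}$, i.e.\ the content of Lemma \ref{lemma: key}, which your outline never isolates or proves. Relatedly, the trace $t$ cannot even be written down as a map $R^{2n}f_*\Omega^\bullet_{X/S}\to\O_S$ factoring through the Serre trace on $R^nf_*\Omega^n_{X/S}$ until one knows $R^{2n}f_*\Omega^\bullet_{X/S}\cong R^nf_*\Omega^n_{X/S}$, which is again Lemma \ref{lemma: key}; and the uniqueness of $t$ up to units of $H^0(X,\O_X)$ is not addressed at all. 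If you supplied the $(n+1)$-term extension and the adjointness of every $d^i$ (not just $d^0$), your route would yield a cleaner, spectral-sequence-free proof; as written, it presupposes exactly the steps that carry the difficulty.
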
 

The proof in the Stacks project uses the Hodge-de Rham spectral sequence, reducing the statement to the vanishing of certain differentials.  The key point is the following lemma. 

\begin{lemma}
\cite[\href{https://stacks.math.columbia.edu/tag/0G8J}{Tag 0G8J}]{stacks-project}
\label{lemma: key}
    Let $S$ be a quasi-compact and quasi-separated scheme and $f\colon X\rightarrow S$ be a proper smooth morphism of schemes all of whose fibres are nonempty and equidimensional of dimension $n$.  Then the map $d^{n-1} \colon R^nf_*\Omega^{n-1}_{X/S}\rightarrow R^nf_*\Omega^n_{X/S}$ is zero.
\end{lemma}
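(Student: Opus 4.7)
My plan is to dualize, via Grothendieck duality, the map $d^{n-1}$ into a map in degree zero, where the vanishing is transparent.

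Since $f$ is smooth and proper of relative dimension $n$, the relative dualizing sheaf is $\Omega^n_{X/S}$, and the wedge pairings $\Omega^j_{X/S}\otimes_{\O_X}\Omega^{n-j}_{X/S}\to\Omega^n_{X/S}$ are perfect. Composition with the Grothendieck trace $\Tr\colon R^n f_*\Omega^n_{X/S}\to\O_S$ therefore produces perfect pairings of locally free $\O_S$-modules
\[
R^n f_*\Omega^{n-1}_{X/S}\otimes_{\O_S} f_*\Omega^1_{X/S}\to \O_S \quad\text{and}\quad R^n f_*\Omega^n_{X/S}\otimes_{\O_S} f_*\O_X\to \O_S.
\]
The vanishing of $d^{n-1}$ is thus equivalent to the vanishing of its transpose $\delta\colon f_*\O_X\to f_*\Omega^1_{X/S}$.

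The heart of the argument is to identify $\delta$ with the universal derivation $d\colon f_*\O_X\to f_*\Omega^1_{X/S}$. Note that $d^{n-1}$ is not $\O_X$-linear, so it is not a morphism in the derived category of coherent sheaves, and classical Grothendieck duality does not apply directly. However $d^{n-1}$ is a differential operator, so becomes a morphism in the enlarged derived category introduced in this paper; the generalisation of Serre duality proved here for two-term differential-operator complexes then identifies $\delta$ with $d^0$ formally.

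To finish, $d^0\colon f_*\O_X\to f_*\Omega^1_{X/S}$ vanishes via Stein factorization. The smooth proper morphism $f$ factors as $X\xto{\pi}Y\xto{g}S$ with $g$ finite \'etale and $\pi$ of geometrically connected fibres. Since $g$ is \'etale, the cotangent sequence yields $\Omega^1_{X/S}=\Omega^1_{X/Y}$, so $d$ annihilates the image of $\pi^{-1}\O_Y\to\O_X$; pushing forward, $d^0$ vanishes on $f_*\O_X = g_*\O_Y$.

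The main obstacle is the middle step: identifying the Grothendieck-Serre transpose of a non-$\O_X$-linear map requires upgrading Grothendieck duality to cover differential-operator morphisms. Without this upgrade (which is the paper's main contribution), one would fall back to an explicit Čech-cocycle computation using the Leibniz rule $d(\alpha\wedge\beta)=d\alpha\wedge\beta+(-1)^{|\alpha|}\alpha\wedge d\beta$ together with a hands-on compatibility between $\Tr$ and $d$; this is essentially the strategy of Hartshorne's original proof, and is delicate because naively applying Leibniz leads to a relation $\Tr(\alpha\cdot d\beta)=\Tr(d(\alpha\beta))-\Tr(d\alpha\wedge\beta)$ that feeds the conclusion back into itself.
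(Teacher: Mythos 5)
Your proposal follows essentially the same route as the paper: reduce to the affine case, use the extension of Serre duality to differential operators to identify $d^{n-1}$ (up to sign) with the transpose of $d^0\colon f_*\O_X\to f_*\Omega^1_{X/S}$, and conclude from the vanishing of $d^0$ (which the paper cites from the Stacks Project, whose proof is exactly your Stein-factorization argument). The only cosmetic difference is that the paper phrases the duality at the level of filtered perfect complexes and extracts the statement on $H^0$ and $H^n$ via a tor-amplitude argument, rather than asserting perfectness of the individual cohomology pairings directly.
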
  

The currently existing proof of this lemma in the Stacks Project uses intricate computations with explicit Gysin maps
 and hides the fact that $d^{n-1}$ turns out to be the dual of another differential, which happens to be $0$ for trivial reasons.
Indeed, we recall that the map $d^0 \colon f_*\O_{X}\rightarrow f_*\Omega^1_{X/S}$ is $0$ \cite[\href{https://stacks.math.columbia.edu/tag/0G8F}{Tag 0G8H}]{stacks-project}. 
Clausen proposed another proof in \cite{clausen} which, though sharing some of the characteristics of the proof we will give, also does go through this duality statement. 
De Jong proposed to prove an extension of Serre duality to the case of differential operators, which would show that the differentials $d^{n-1}$ and $d^0$ are dual, thus simplifying the existing proof of the lemma.

Denote $\omega_{X/S}\coloneqq \Omega^n_{X/S}$.  If $\mc{E}$ is a locally free $\O_X$-module of finite rank, define its dual $\mathcal{E}^*\coloneqq \mc{E}^\vee\otimes_{\O_S}\omega_{X/S}$, where $\mc E^\vee := \localHom (\mc E , \mc O_X)$.  
Given a differential operator $D\colon \mc{E}\rightarrow\mc{F}$ between two locally free $\O_X$-modules, we will first define a dual differential operator $D^* \colon \mc{F}^*\rightarrow\mathcal{E}^*$ that satisfies $(d^0)^*=-(d^{n-1})^*$.
This definition naturally extends to the filtered derived category of $S$-differential complexes with uniformly bounded filtrations, a category which we define and study in detail in Section \ref{sec: kunneth}.  
The main new result of this paper is Theorem \ref{thm: copairing}, a duality statement that extends Serre duality to take differential operators into account.  
More concretely, it implies the following statement.

\begin{proposition} 
\label{prop: serre d}
Let $f\colon X\rightarrow S$ be smooth and proper of relative dimension $n$, where $S=\Spec A$ is affine. Let $M$ be a two-term complex of locally free sheaves of finite rank in the filtered derived category of $S$-differential complexes with uniformly bounded filtrations
    Then there is a perfect pairing of complexes 
    \[
Rf_*M \otimes^{\mb{L}}_{\O_S} Rf_* (M^*)[n] \rightarrow \O_S.  
    \]
\end{proposition}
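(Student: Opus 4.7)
The plan is to deduce Proposition \ref{prop: serre d} from Theorem \ref{thm: copairing} by specializing to the two-term case. Write $M = [\mc{E} \xrightarrow{D} \mc{F}]$ where $D$ is a differential operator, so that $M^* = [\mc{F}^* \xrightarrow{D^*} \mc{E}^*]$ using the dual differential operator construction introduced earlier. First I would record that for any locally free sheaf $\mc{G}$ of finite rank on $X$, ordinary Grothendieck--Serre duality provides a perfect pairing
\[
Rf_*\mc{G} \otimes^{\mb{L}}_{\O_S} Rf_*(\mc{G}^\vee \otimes_{\O_X} \omega_{X/S})[n] \to \O_S.
\]
Applied termwise to $\mc{E}$ and $\mc{F}$, this yields perfect pairings between the individual terms of $Rf_*M$ and $Rf_*(M^*)$. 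Note that since $D$ is $f^{-1}\O_S$-linear, the pushforward $Rf_*D$ is honestly $\O_S$-linear, so $Rf_*M$ and $Rf_*(M^*)$ are genuine two-term complexes of perfect objects in $D(\O_S)$.

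The crux is to glue these termwise pairings into a pairing of the full two-term complexes. This amounts to verifying that the Serre duality pairings are compatible with the differentials, i.e.\ that $Rf_*D \otimes \mathrm{id}$ on the pairing for $\mc{F}$ agrees, up to the appropriate sign, with $\mathrm{id} \otimes Rf_*D^*$ on the pairing for $\mc{E}$. This compatibility is exactly what Theorem \ref{thm: copairing} establishes; combining the two termwise pairings using it produces the desired morphism $Rf_*M \otimes^{\mb{L}}_{\O_S} Rf_*(M^*)[n] \to \O_S$.

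Perfectness then follows by a five-lemma argument. The canonical distinguished triangle $Rf_*\mc{E} \to Rf_*M \to Rf_*\mc{F}[1]$ and the analogous triangle for $Rf_*(M^*)[n]$ assemble, via the constructed pairing, into a morphism of distinguished triangles from $Rf_*M$ to $\RHom_{\O_S}(Rf_*(M^*)[n], \O_S)$. Since the induced maps on the outer vertices are isomorphisms by termwise Serre duality, the five lemma yields that the middle map is also an isomorphism. The main obstacle I anticipate is the compatibility check in the second step — tracking the signs and twists coming from the shift by $[n]$ and the definition of $D^*$ — but this is precisely what Theorem \ref{thm: copairing} is built to handle, and the specialization to two-term complexes only requires verifying that the general construction unpacks correctly in this situation.
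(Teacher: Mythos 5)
Your overall strategy --- reduce to termwise Serre duality for $\mc{E}$ and $\mc{F}$, invoke Theorem \ref{thm: copairing} for the compatibility with the differentials, and deduce perfectness by checking the outer vertices of the distinguished triangles --- is the route the paper takes, and the proposition is indeed presented there as a direct consequence of Theorem \ref{thm: copairing}. The one step that does not work as literally described is the ``gluing'' of the two termwise evaluation pairings into a pairing on the totalisations: in the derived category one cannot assemble a morphism $Rf_*M \otimes^{\mb{L}}_{\O_S} Rf_*(M^*)[n] \to \O_S$ from maps on the graded pieces together with a sign compatibility, since that data does not determine a map out of a (generally non-split) extension, and Theorem \ref{thm: copairing} is not a statement about the evaluation pairings at all. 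The paper works on the other side of the duality: it constructs an explicit \emph{coevaluation} $\eta\colon A\{0\} \to R\Gamma(X,M)\otimes^{\mb{L}}_A R\Gamma(X,M^*)[n]$ as a single cohomology class $\tau_{\mc{E}}+\tau_{\mc{F}}$ on $X\times_S X$ supported on the diagonal, where the compatibility with $D$ and $D^*$ that you correctly identify as the crux becomes the concrete identity $(D\boxtimes 1)(\tau_{\mc{E}}) = (1\boxtimes D^*)(\tau_{\mc{F}})$ of Proposition \ref{proposition: equality}; it then shows $\eta$ is a graded coevaluation and obtains the evaluation abstractly from the dualisability formalism of Proposition \ref{lemma: copairing sym}: tensoring $\eta$ with the dual gives a map $Q^\vee\to P$ which is an isomorphism because it is one on graded pieces --- which is exactly where your five-lemma argument lives. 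So your proof becomes correct once the gluing step is replaced by the observation that Theorem \ref{thm: copairing} exhibits $R\Gamma(X,M^*)[n]$ as the monoidal dual of $R\Gamma(X,M)$ in $(DF(A),\underline{\otimes}^L)$, whence the canonical evaluation map is the desired perfect pairing.
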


We will use Theorem \ref{thm: copairing} to prove Lemma \ref{lemma: key} by taking $M$ to be the complex $\O_X\xrightarrow{d}\Omega^1_{X/S}$, thus giving a new proof of Poincar\'e duality for algebraic de Rham cohomology in the relative setting.  

\subsection{Outline of paper}
We will now outline the contents of this paper in more detail.  
In Section \ref{sec: dual diff}, we construct the dual differential operator $D^*\colon \mc{F}^*\rightarrow \mc{E}^*$ to a differential operator $D\colon \mc{E}\rightarrow \mc{F}$ between locally free sheaves of finite rank.
Over the next few sections, given a two-term complex of the form $M=\mc{E}\xrightarrow{D}\mc{F}$, our goal is to construct a perfect copairing $\eta\colon A\rightarrow R\Gamma(X, M)[n]\otimes_A^{\mathbb{L}}R\Gamma(X, M^*)$.    
In Section \ref{sec: kunneth}, we will prove a version of the filtered K\"unneth isomorphism so that such a copairing is equivalent to giving an element $\eta(1)\in H^n(X\times_S X, M\boxtimes M^*)$.   
In Section \ref{sec: copairing locally free} we will construct a sequence of maps 
\[
H^0(\Delta, \mathcal{H}^n_{\Delta}(\mathcal{E})) \cong H^n_{\Delta}(X\times_S X, \mc{E}\boxtimes \mc{E}^*) \rightarrow H^n(X\times_S X, \mc{E}\boxtimes \mc{E}^*), 
\] 
and we will produce an element $\eta(1)$ by explicitly writing down a global section of $\mathcal{H}^n_{\Delta}(\mc{E})$ on $\Delta$.  In Section \ref{sec: compatibility} we will confirm that this element produces a copairing compatible with Serre duality.  In Section \ref{sec: two-term}, we will extend our constructions and results of the previous two sections to two-term complexes, proving Theorem \ref{thm: copairing}.  Finally in Section \ref{sec: duality diff}, we will give the full argument that Theorem \ref{thm: copairing} implies Lemma \ref{lemma: key}.

\subsection{Acknowledgements} 
We would like to thank Johan de Jong, who proposed this project to us and spent many hours explaining it to us and answering our questions about it.  
We would also like to thank all the 2023 Stacks Project Workshop organisers for bringing us together to this workshop where this work began. 
We were supported by the grants NSF DMS-2309115 and DMS-1840234 at the workshop.  
Additionally, CJ was supported by NSF grant DGE-203619. CK was supported by the NSF Graduate Research Fellowship under Grant No.\ 2140001. OL would like to thank Jack Hall and Fei Peng for many helpful discussions about perfect complexes and tor-dimension, and is furthermore extremely grateful to Fei Peng for the opportunity to participate in the 2023 Stacks Project Workshop.
SM would like to thank Alexander Efimov, Tony Pantev and Peter Scholze for helpful discussions and patience, and also Andres Fernandez Herrero for useful suggestions.
SS was partially supported by NSF grant DMS-2053473 through Kiran Kedlaya, and thanks Maxwell Johnson, Brendan Murphy and Nathan Wenger for discussions about various aspects of this paper.

\section{Defining the dual differential}
\label{sec: dual diff}
\subsection{Setup and motivation} 
Let $\theta\colon X\rightarrow S$ be a smooth morphism of schemes of relative dimension $n$. 
 Let $D\colon \mathcal{E}\rightarrow\mathcal{F}$ be a differential operator {of finite order} between two finite rank locally free $\O_X$-modules.  We define $\mathcal{E}^*\coloneqq \mathcal{E}^\vee\otimes \omega_{X/S}$, where $\omega_{X/S}\coloneqq \Omega^n_{X/S}$. 
 We wish to define a dual differential operator $D^*\colon\mathcal{F}^*\rightarrow \mathcal{E}^*$ such that $(D_1\circ D_2)^* = D_2^*\circ D_1^*$ and $(D_1^*)^*=D_1$.  Furthermore, for $D = d^0\colon\mathcal{O}_X \to \Omega^1_{X/S}$, we should have $D^* = -d^{n-1}\colon\Omega^{n-1}_{X/S} \to \omega_{X/S}$. 

 The existence of such a dual differential operator is motivated by the formal adjoint in the context of D-modules, which we now briefly recall (see e.g. \cite[Section 1.2]{Hotta}).  
 Given a smooth complex variety $X$ of dimension $n$, the sheaf $\Omega^n_{X}$ has a natural structure of a right D-module.
 This structure comes from the action of a vector field $\theta \in \Theta_X$ on $\omega\in \Omega^n_X$ given by $(\omega)\theta = -(\Lie \theta)\omega$.
 Then a differential operator between locally free sheaves $\theta\colon E\rightarrow F$ naturally gives a dual differential operator $F^\vee\otimes \Omega^n_X\rightarrow E^\vee\otimes \Omega^n_X$.  Locally, given an appropriate trivialisation and in rank one, if the differential operator is of the form $P(x, \partial) = \sum_{\alpha} a_{\alpha}(x)\partial^{\alpha}$, then its formal adjoint is given by 
 \[
\sum_{\alpha}(-\partial)^{\alpha}a_{\alpha}(x).
 \]
To generalise this definition to a differential operator relative to a morphism of schemes, de Jong indicated two methods in the blogpost \cite{blog}.  We will work out the second approach in detail here, which amounts to defining it locally and checking that it extends to a global definition.  This imitates the situation of D-modules which corresponds to the case $S=\Spec \mathbb{C}$; the main difference to be aware of is simply that in characteristic $p$, the differential operators can take a slightly different form.

\subsection{Local description}
\label{subsection: local description}
We begin in the local scenario where the base $S=\Spec A$ is affine.  Let $R$ be a smooth $A$-algebra with the structure morphism $\theta \colon \Spec R \to \Spec A$, and let $D \colon R^n\rightarrow R^m$ be an $A$-linear differential operator. 
By \cite[\href{https://stacks.math.columbia.edu/tag/039P}{Tag 039P}]{stacks-project}, locally $\theta$ can be factored as $\Spec R\xrightarrow{\pi}\Spec A[x_1, \ldots, x_n]\rightarrow \Spec A$ where $\pi$ is \'etale.
Explicitly, this factorisation is given by taking an open covering on which $\Omega^1_{X/S}$ is trivial and using $n$ linearly independent sections to give a map into $\mathbb{A}^n_S$.

By \cite[Theorem 16.11.2]{EGA4}, a differential operator $D$ is locally given by an $m\times n$ matrix with entries of the form 
 \[
\sum_{\alpha}f_\alpha \Big( \frac{1}{\alpha!}\cdot \frac{\partial}{\partial x_i^\alpha}\Big)_{\alpha}, 
 \]
 where $f_\alpha\in R$.
 Here $\alpha$ is a tuple of nonnegative integers, $x^\alpha := x_1 ^ {\alpha_1} \cdots x_n ^{\alpha_n}$, and $\alpha!$ denotes the product $\alpha_1! \cdots \alpha_n!$.
 For convenience we will first define the dual in the case that $\rank \mathcal{E}=\rank\mathcal{F} = 1$; locally this is equivalent to considering the case of differential operators $R\rightarrow R$. \\ 

Define $f^* = f$ and $(\frac{\partial}{\partial x_i})^* = -\frac{\partial}{\partial x_i}$, after making the appropriate trivialisation $\omega_{X/S}\cong \O_X$ defined by $g dx_1\wedge dx_2\wedge\cdots dx_n\mapsto g$ for $g\in R$.  Using this trivialisation, we extend this definition by the formula 

\begin{equation}
\label{eq: dual diff}
\left(\sum_{\alpha}f_\alpha \Big(\frac{1}{\alpha!}\cdot \frac{\partial}{\partial x^\alpha}\Big)_{\alpha}\right)^* 
= \sum_{\alpha} (-1)^{|\alpha|}\frac{1}{\alpha!}
\frac{\partial }{\partial x^\alpha}(f_\alpha \cdot \blank). 
\end{equation}

\begin{lemma}
    The dual of $d^0\colon\O_X\rightarrow \Omega^1_{X/S}$ is equal to $-d^{n-1}\colon \Omega^{n-1}_{X/S}\rightarrow \Omega^n_{X/S}$.
\end{lemma}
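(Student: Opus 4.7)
The plan is to verify the equality locally in étale coordinates, appealing to the formula \eqref{eq: dual diff} which defines the dual, and whose independence of choices will be established elsewhere in the paper. First, I would choose étale local coordinates $x_1,\ldots,x_n$ and trivialise $\omega_{X/S}$ by $dx_1\wedge \cdots \wedge dx_n$. In the basis $\{dx_i\}$ for $\Omega^1_{X/S}$, the differential $d^0\colon \O_X \to \Omega^1_{X/S}$ is represented by the column of first-order operators $(\partial/\partial x_i)_i$.

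Applying \eqref{eq: dual diff} componentwise and using $(\partial/\partial x_i)^\ast = -\partial/\partial x_i$, the dual $(d^0)^\ast$ becomes, in the dual basis $\{dx_i^\vee\}$ and after trivialising $\omega_{X/S}$, the map $\bigoplus_i \O_X \to \O_X$ sending $(g_1,\ldots,g_n)\mapsto -\sum_i \partial g_i/\partial x_i$.

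I would then make the isomorphism $(\Omega^1_{X/S})^\ast\cong \Omega^{n-1}_{X/S}$ explicit via the perfect wedge pairing $\Omega^1_{X/S}\otimes_{\O_X}\Omega^{n-1}_{X/S}\to \omega_{X/S}$. Under this isomorphism, together with the trivialisation of $\omega_{X/S}$, the element $dx_i^\vee$ corresponds to $(-1)^{i-1}\,dx_1\wedge\cdots\widehat{dx_i}\cdots\wedge dx_n$. A short computation using $dx_i\wedge dx_1\wedge\cdots\widehat{dx_i}\cdots\wedge dx_n = (-1)^{i-1}dx_1\wedge\cdots\wedge dx_n$ then shows that $d^{n-1}$ reads, in these coordinates, as $(g_1,\ldots,g_n)\mapsto \sum_i \partial g_i/\partial x_i$. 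Comparing with the previous paragraph yields $(d^0)^\ast = -d^{n-1}$.

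The only real obstacle is sign bookkeeping. Two independent sources of $(-1)^{i-1}$ signs appear — one from the identification $(\Omega^1_{X/S})^\ast\cong\Omega^{n-1}_{X/S}$, and another from wedging $dx_i$ past $i-1$ other one-forms when computing $d^{n-1}$ — and they must cancel exactly, so that the overall factor of $-1$ comes cleanly from the dual of $\partial/\partial x_i$. Since both signs arise from the same combinatorial operation, namely the position of $dx_i$ within $dx_1\wedge\cdots\wedge dx_n$, the cancellation is automatic and the lemma follows.
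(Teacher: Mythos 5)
Your proposal is correct and follows essentially the same route as the paper: both arguments work locally in \'etale coordinates, identify $d^0$ with the column $\left[\partial/\partial x_1,\dots,\partial/\partial x_n\right]^T$, use the alternating-sign identification $(\Omega^1_{X/S})^*\cong \Omega^{n-1}_{X/S}$ sending $dx_i^\vee\otimes\omega$ to $(-1)^{i-1}dx_1\wedge\cdots\widehat{dx_i}\cdots\wedge dx_n$, and observe that the two $(-1)^{i-1}$ signs cancel so that the residual minus sign comes from $(\partial/\partial x_i)^*=-\partial/\partial x_i$. Your explicit verification of the sign cancellation is a slightly more detailed rendering of the same computation.
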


\begin{proof}
We do this locally, so assume $X\rightarrow S$ is of the form $\Spec R\xrightarrow{\pi}\Spec A[x_1, \ldots, x_n]\rightarrow \Spec A$ where $\pi$ is \'etale, as before.  
Indeed, locally we have $df = \sum_{i=1}^n \frac{\partial f}{\partial x_i}dx_i$, so identifying $\Omega^1_{X/S}$ with $Rdx_1\oplus \cdots \oplus Rdx_n$, we have
$d^0 = \left[ \frac{\partial }{\partial x_1}, \dots, \frac{\partial }{\partial x_n} \right]^T$.  

Then $(d^0)^*\colon (\Omega^{1}_{X/S})^\vee\otimes \Omega_{X/S}^n \rightarrow \O_X^\vee\otimes  \Omega^n_{X/S}$ is a map $\Omega_{X/S}^{n-1}\rightarrow \Omega_{X/S}^n$.
After making the natural identification 
\[(\Omega^{1}_{X/S})^\vee\otimes \Omega_{X/S}^n \cong R(dx_2 \wedge \cdots \wedge dx_n) \oplus  \cdots \oplus R((-1)^{n-1} dx_1\wedge \cdots\wedge dx_{n-1})\]
with alternating signs, we get that $(d^0)^*$ is defined by $\left[ -\frac{\partial }{\partial x_1}, \dots, -\frac{\partial }{\partial x_n} \right]$,
while $d^{n-1}$ is given by $\sum_{i=1}^n \frac{\partial}{\partial x_i}$ in this basis.
We thus have $(d^0)^* = - d^{n-1}$ as desired.
\end{proof}

\begin{proposition}
    Let $D_1, D_2 \colon R\rightarrow R$ be two finite order differential operators.  Then 
    
    (a) $(D_1\circ D_2)^* = D_2^*\circ D_1^*$.

    (b) $(D_1^*)^* = D_1$. 
\end{proposition}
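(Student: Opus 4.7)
Both identities are local assertions determined by the formula (\ref{eq: dual diff}), so we may work throughout in the setting of Section \ref{subsection: local description} and write every finite-order operator uniquely in normal form $\sum_\alpha f_\alpha \cdot \tfrac{1}{\alpha!}\partial^\alpha$. Writing $D^{(\alpha)} := \tfrac{1}{\alpha!}\partial^\alpha$ for brevity, the dual $D \mapsto D^*$ is visibly $A$-linear, and each side of (a), (b) is $A$-(bi)linear in its inputs. Hence it suffices to verify (a) for monomials $D_1 = f\, D^{(\alpha_1)}$, $D_2 = g\, D^{(\alpha_2)}$ and (b) for $D = f\, D^{(\alpha)}$. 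The main tool throughout is the divided-power Leibniz rule
\[
D^{(\alpha)}(fg) \;=\; \sum_{\beta \le \alpha} D^{(\alpha - \beta)}(f) \cdot D^{(\beta)}(g),
\]
together with the composition identity $D^{(\alpha)} \circ D^{(\beta)} = \binom{\alpha+\beta}{\alpha} D^{(\alpha+\beta)}$ on $R$; both are valid in arbitrary characteristic.

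For (b), I would first apply (\ref{eq: dual diff}) to obtain $D^*(h) = (-1)^{|\alpha|}D^{(\alpha)}(fh)$, then expand via the divided-power Leibniz rule to put $D^*$ into normal form with coefficient $(-1)^{|\alpha|} D^{(\alpha-\beta)}(f)$ in front of $D^{(\beta)}$. A second application of (\ref{eq: dual diff}), followed by another round of Leibniz, produces $(D^*)^*$ as a sum in which the coefficient of $D^{(\gamma)}$ is a signed sum indexed by the interval $\gamma \le \beta \le \alpha$. Applied coordinate by coordinate, the alternating identity $\sum_k (-1)^k \binom{n}{k} = 0$ for $n>0$ forces this inner sum to collapse to $\delta_{\alpha,\gamma}\cdot f$, yielding $(D^*)^* = f\, D^{(\alpha)} = D$.

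For (a), I would compute both $(D_1 D_2)^*$ and $D_2^* D_1^*$ as explicit sums of $R$-multiples of divided powers $D^{(\gamma)}$. On one side, one uses the divided-power Leibniz rule to bring $D_1 \circ D_2$ into normal form before applying (\ref{eq: dual diff}); on the other side, one composes the duals of $D_1$ and $D_2$ (each computed as in the proof of (b)) and renormalises. Both expansions reduce to the same double sum over multi-indices, with matching signs and divided-power binomial weights, once one invokes the composition and Leibniz identities stated above.

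The principal obstacle is combinatorial bookkeeping: controlling the commutation rule $D^{(\alpha)} \circ f = \sum_{\beta \le \alpha} D^{(\alpha - \beta)}(f) \cdot D^{(\beta)}$ alongside the composition identity, and organising the resulting multi-index sums so that the two sides of (a) visibly agree. A more conceptual alternative would be to induct on order: verify (a) directly from (\ref{eq: dual diff}) when $D_1, D_2$ each lie in a small generating set (multiplication by $f \in R$, basic derivations $\partial_i$, and in positive characteristic also the divided powers $D^{(\alpha)}$), and propagate to all pairs by bilinearity together with the already-established rule $(D_1 D_2)^* = D_2^* D_1^*$ on shorter products. Both strategies ultimately rest on the same Leibniz identity.
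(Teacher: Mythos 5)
Your argument is correct, but it takes a genuinely different route from the paper. The paper proves (a) by induction on $|\alpha|$: it peels off one first-order derivative at a time using the commutation relation $\frac{\partial}{\partial x_i}\circ g = \frac{\partial g}{\partial x_i} + g\frac{\partial}{\partial x_i}$, handles the base case $|\alpha|=0$ by a one-line computation, and then deduces (b) formally from (a) via $(f\partial^\alpha)^{**} = (-1)^{|\alpha|}(\partial^\alpha\circ f)^* = (-1)^{|\alpha|}f^*\circ(\partial^\alpha)^* = f\partial^\alpha$. You instead work directly in the divided-power normal form and prove (b) independently of (a), by expanding $(D^*)^*$ twice with the Hasse--Leibniz rule and collapsing the inner sum $\sum_{\gamma\le\beta\le\alpha}(-1)^{|\alpha|+|\beta|}\binom{\alpha-\gamma}{\alpha-\beta}$ to $\delta_{\alpha,\gamma}$; that computation is complete and correct. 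Your approach has a real advantage in positive characteristic: since the divided powers $\tfrac{1}{\alpha!}\partial^\alpha$ are not compositions of first-order operators when $p\mid\alpha!$, an argument phrased entirely in terms of the generators $f$ and $\partial/\partial x_i$ does not obviously reach them, whereas your expansion treats all of $\Gamma_\ast$ uniformly. The cost is combinatorial: for (a), your claim that $(D_1D_2)^*$ and $D_2^*D_1^*$ ``reduce to the same double sum'' is asserted rather than exhibited. After putting $D_1\circ D_2$ into normal form one side becomes
\[
\sum_{\gamma\le\alpha}\;\sum_{\mu\le\gamma+\beta}(-1)^{|\gamma|+|\beta|}\binom{\gamma+\beta}{\gamma}\,D^{(\gamma+\beta-\mu)}\bigl(f\,D^{(\alpha-\gamma)}(g)\bigr)\,D^{(\mu)},
\]
while the other becomes
\[
(-1)^{|\alpha|+|\beta|}\sum_{\sigma\le\alpha}\;\sum_{\tau\le\beta}\binom{\sigma+\tau}{\tau}\,D^{(\beta-\tau)}\bigl(g\,D^{(\alpha-\sigma)}(f)\bigr)\,D^{(\sigma+\tau)},
\]
and matching these requires a further round of Leibniz expansion plus a Vandermonde-type cancellation on the binomial weights --- this is where essentially all of the work in (a) lives, and it is noticeably heavier than the paper's one-step induction. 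Your fallback suggestion (verify (a) on the generating set including divided powers and propagate by the anti-homomorphism property) is the cleaner way to close this and is in spirit the paper's argument, made characteristic-$p$-safe; I would recommend writing (a) that way and keeping your direct divided-power computation for (b).
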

\begin{proof}
    (a) 
    It suffices to show that
    $\left( f\frac{\partial}{\partial x_i^\alpha}g\frac{\partial}{\partial x_i^\beta} \right)^* 
    = (g\frac{\partial}{\partial x_i^\beta})^*(f\frac{\partial}{\partial x_i^\alpha})^*$.
    We prove it by induction on $|\alpha|$, with the base case $|\alpha|=0$.
    We note that when $D_1 = f$ and $D_2 = g \frac{\partial}{\partial x^\beta}$, then 
    \[ 
        \left( f\circ g\frac{\partial}{\partial x^\beta} \right)^*
        = \left( f g \circ \frac{\partial}{\partial x^\beta} \right)^*
        = (-1)^{|\beta|} \frac{\partial}{\partial x^\beta}(fg \cdot \blank)
        = \left(g \frac{\partial}{\partial x^\beta}\right)^* \circ f^* 
    . \]
    For the step of induction, given $\alpha$ and $\beta$ with $\alpha_i > 0$ for some $i$, we denote by $\alpha'$ the tuple $(\alpha_1, \dots , \alpha_i - 1 , \dots, \alpha_n)$, 
    and by $\beta'$ the tuple $(\beta_1, \dots , \beta_i + 1 , \dots, \beta_n)$.
    \begin{align*}
        \text{LHS}
        &= \left( f \dd{x^{\alpha'}} \dd{x_i} g \dd{x^\beta} \right)^*
        = \left( f \dd{x^{\alpha'}} 
        \left( \ddd{g}{x_i} + g \dd{x_i} \right)
        \dd{x^\beta} \right)^*
        \\ &= \left( f \dd{x^{\alpha'}} \circ \ddd{g}{x_i}
        \dd{x^\beta} \right)^*
        + \left( f \dd{x^{\alpha'}} \circ g \dd{x^{\beta'}} \right)^*
        \\ &=
        \left( \ddd{g}{x_i} \dd{x^\beta} \right)^* \circ
        \left( f \dd{x^{\alpha'}} \right)^*
        + \left( g \dd{x^{\beta'}} \right)^* \circ \left( f \dd{x^{\alpha'}} \right)^*
        \text{, by induction hypothesis}
        \\ &=
        \left( \ddd{g}{x_i} \dd{x^\beta} + g \dd{x_i} \dd{x^\beta} \right)^*
        \circ \left( f \dd{x^{\alpha'}} \right)^*
        \\ &=
        (-1)^{|\beta|}\dd{x^\beta} \circ \left( \ddd{g}{x_i}  - \dd{x_i} \circ g \right)
        \circ (-1)^{|\alpha'|} \dd{x^{\alpha'}}  \circ f
        \\ &=
        (-1)^{|\beta|}\dd{x^\beta} \circ g \circ \left( - \dd{x_i} \right)
        \circ (-1)^{|\alpha'|} \dd{x^{\alpha'}}  \circ f
        \\ &=
        (-1)^{|\beta|}\dd{x^\beta} \circ g
        \circ (-1)^{|\alpha|} \dd{x^{\alpha}} \circ f
        \\ &= \text{RHS, by definition.} 
    \end{align*}

    (b) Applying part (a) to the term corresponding to each fixed $\alpha$, we get
    \begin{align*}
    \left( f \dd{x^\alpha} \right)^{**}
    &= (-1)^{|\alpha|} \left( \dd{x^\alpha} \circ f \right)^{*}
    \\ &= (-1)^{|\alpha|} f^* \circ \left( \dd{x^\alpha} \right)^*
    \text{, by part (a)}
    \\ &= f \dd{x^\alpha} ,
    \end{align*}
    and the result follows.  
\end{proof}

The same method allows us to define the dual differential locally for locally free sheaves of finite rank by doing so coordinate-wise.  The next goal is to show that these local definitions do indeed globalise.
  
\subsection{Global definition}
To give a global definition of the dual differential in which $S$ is an arbitrary scheme, we must ensure that the local definitions described in the previous subsection patch together to uniquely define a dual differential.  
Given a smooth morphism $f\colon X\rightarrow S$ (where $S$ is no longer assumed to be affine), let $\gamma\colon \mc{E}\rightarrow \mc{F}$ a differential operator between locally free sheaves of finite rank on $X$.  
Let us first consider how the dual differential is locally defined in the case $\mc{E}=\mc{F} = \O_X$.  
First, we restrict to an affine open covering $\{U_{\alpha}\}$ of $X$ with $\Omega^1_{X/S}$ locally free over $\{U_{\alpha}\}$ and the image of each $U_{\alpha}$ contained in an open affine subscheme of $S$.  Then we choose a trivialisation that gives a factorisation $U_\alpha\xrightarrow{\pi_{\alpha}} \mb{A}^n_S\rightarrow S$ with $\pi_{\alpha}$ \'etale, and finally we apply the formula given by Equation \ref{eq: dual diff} on each $U_{\alpha}$. 
If we show that these are compatible, then in fact this leads to a definition of the dual differential $\gamma^*$ for any $\mc{E}$ and $\mc{F}$ locally free of finite rank.  Indeed, we can further refine the open cover to one where $\mc{E}$ and $\mc{F}$ are locally free.  Then the compatibilities on each matrix component of the dual differential on the refined cover yield a well-defined definition of $\gamma^*$. 

\begin{proposition}
    Applying the rule described in Equation \ref{eq: dual diff} to a differential operator $D\colon \O_X\rightarrow \O_X$ gives a compatible system of dual differentials between different charts, and thus this method extends to give a well-defined dual differential for locally free sheaves of finite rank.
\end{proposition}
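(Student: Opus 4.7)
The plan is to give a coordinate-free description of $D^*$ and verify that Equation \ref{eq: dual diff} computes it in any local chart; since the intrinsic construction is independent of coordinates, compatibility across charts is then automatic.

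First I would recall that the sheaf $\mathcal{D}_{X/S}$ of relative differential operators of finite order (in the sense of Grothendieck) is intrinsically defined, and that the natural map $P\mapsto (h\mapsto P\cdot h)$ identifies its sections with $\O_S$-linear differential operators $\O_X\to \O_X$. Thus any differential operator $D\colon \O_X\to \O_X$ corresponds to a unique section $P_D\in \mathcal{D}_{X/S}$. The dualising sheaf $\omega_{X/S}$ carries a canonical right $\mathcal{D}_{X/S}$-module structure, determined on relative vector fields $\theta$ by $\omega\cdot\theta = -\mathcal{L}_\theta(\omega)$ and extended to the divided-power operators $\partial^{[\alpha]}\coloneqq \partial^\alpha/\alpha!$ via the standard PD-differential operator formalism; in all characteristics this structure is intrinsic. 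I would then define the intrinsic dual by
\[
D^{*,\mathrm{int}}\colon \omega_{X/S}\to \omega_{X/S}, \qquad \omega\mapsto \omega\cdot P_D.
\]

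The substance of the argument is then the local verification that $D^{*,\mathrm{int}}$ coincides with Equation \ref{eq: dual diff}. In an étale chart $\pi\colon U\to \mathbb{A}^n_S$, after the trivialisation $\omega_{X/S}|_U\cong \O_U$ via $dx_1\wedge\cdots\wedge dx_n$, for $P = f\cdot \partial^{[\alpha]}\in \mathcal{D}_{X/S}$ acting on $\omega = g\cdot dx_1\wedge\cdots\wedge dx_n$ one computes
\[
\omega\cdot P \;=\; (\omega\cdot f)\cdot \partial^{[\alpha]} \;=\; (fg\cdot dx_1\wedge\cdots\wedge dx_n)\cdot \partial^{[\alpha]} \;=\; (-1)^{|\alpha|}\tfrac{1}{\alpha!}\partial^\alpha(fg)\cdot dx_1\wedge\cdots\wedge dx_n,
\]
matching Equation \ref{eq: dual diff} term by term. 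Since the local formula computes the intrinsic $D^{*,\mathrm{int}}$, which was defined without reference to the chart, the two local formulas on any overlap necessarily agree. The extension to locally free $\mathcal{E}, \mathcal{F}$ of finite rank is then componentwise: after refining the cover so that $\mathcal{E}$ and $\mathcal{F}$ are also free, apply the rank-one case to each matrix entry; the intrinsic global construction (which equivalently may be phrased via the right $\mathcal{D}_{X/S}$-action on $\mathcal{E}^\vee \otimes \omega_{X/S}$) ensures the resulting matrices glue.

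The main subtlety I would expect is the characteristic-$p$ case, where $\mathcal{D}_{X/S}$ is not generated by vector fields, so the right action of $\partial^{[\alpha]}$ cannot simply be read off by iterating the Lie-derivative formula and one must invoke the PD-differential operator formalism. An alternative, more hands-on route that avoids this machinery is to verify directly via the chain rule that Equation \ref{eq: dual diff} transforms correctly under a change of étale coordinates $(x_1,\ldots,x_n)\leadsto (y_1,\ldots,y_n)$; this reduces to tracking how $\partial^{[\alpha]}$ and the volume form $dx_1\wedge\cdots\wedge dx_n$ transform in concert, and is ultimately a Faà di Bruno--type calculation.
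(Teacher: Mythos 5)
Your strategy --- exhibit an intrinsic object that the local formula computes, and let chart-independence follow for free --- is sound in principle, and it is in fact the philosophy of the paper's own \emph{subsequent} subsection, where a coordinate-free dual is built from the infinitesimal neighbourhoods $\mathcal{P}^n$ of the diagonal and tensor-Hom adjunction (\cref{construction: coordinate-free dual}). However, as written your main argument is circular in exactly the case the paper cares about. Over a field of characteristic $0$ the right $\mathcal{D}_X$-module structure on $\omega_X$ is determined by the Lie-derivative formula on vector fields, because functions and vector fields generate $\mathcal{D}_X$; there your proof is complete and is a legitimate alternative to the paper's. But the paper works with a smooth morphism of arbitrary schemes, where Grothendieck's ring of differential operators contains the divided-power operators $\partial^{[\alpha]} = \partial^\alpha/\alpha!$, which are \emph{not} generated by first-order operators in characteristic $p$. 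The assertion that $\omega_{X/S}$ carries a canonical, chart-independent right module structure over this full ring, with local formula $\omega\cdot(f\partial^{[\alpha]}) = (-1)^{|\alpha|}\partial^{[\alpha]}(f\,\cdot\,)\,\omega$, is precisely the content of the proposition: it is the statement that the transposition anti-involution glues. Invoking ``the PD-differential operator formalism'' at this point names the desired structure rather than constructing it.

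The paper closes this gap by brute force: on an overlap it takes two \'etale coordinate systems, relates $\partial/\partial x_1$ to the $\partial/\partial y_k$ by the chain rule, and verifies by direct computation (using that the relevant Jacobian cofactor is annihilated by $\sum_k\partial/\partial y_k$) that the two candidate duals agree; it then handles zeroth-order operators and extends to compositions via the anti-involution property. This is exactly your proposed ``alternative, more hands-on route,'' which you correctly identify but do not carry out. (As an aside, the paper's own closing step --- that functions and the $\partial/\partial x_i$ generate all differential operators --- is also problematic in characteristic $p$ for the same reason, so a complete argument must either treat the $\partial^{[\alpha]}$ directly in the chart comparison, or genuinely run the $\mathcal{P}^n$/adjunction construction and check that it recovers Equation \ref{eq: dual diff}.) To turn your write-up into a proof, either carry out the Fa\`a di Bruno--type verification of how $\partial^{[\alpha]}$ and the volume form transform in concert, or replace the appeal to a ``canonical right $\mathcal{D}_{X/S}$-action'' by the adjunction construction and prove the comparison with the local formula.
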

\begin{proof}
As described above, the second statement follows from the first.
To show compatibility, it suffices to check that for any two $U_{\alpha}, U_{\beta}$, the restrictions of $D^*|_{U_{\alpha}}$ and $D^*|_{U_\beta}$ to $U_{\alpha}\cap U_{\beta}$ coincide.
For this, it suffices to check this on any open cover of $U_{\alpha}\cap U_{\beta}$ itself.
We may choose one consisting of affine $U = \Spec R\subset U_{\alpha}\cap U_{\beta}$ where $U$ maps into some open affine $S' = \Spec A$.
Then the two factorisations $U\xrightarrow{}\mb{A}^n_S \rightarrow S$ are determined by two distinct trivialisations of $\Omega^1_{U/S}$, which differ by an element of $\GL(n, A)$.  

Let $dx_1\wedge \cdots dx_n$ and $dy_1\wedge \cdots dy_n$ be the two trivialisations, with $J = \det [\frac{\partial x_i}{\partial y_j}]$ being the Jacobian.  Let us first check the result for $D=\frac{\partial}{\partial x_1}$.  We have 
\[D_x^*(f dx_1\wedge\cdots \wedge dx_n) = -\frac{\partial f}{\partial x_1} dx_1\wedge\cdots \wedge dx_n \]
Since $D_y = \sum_{k=1}^n\frac{\partial y_k}{\partial x_1}\frac{\partial}{\partial y_k}$, we have $D_y^* = \sum_k -\frac{\partial}{\partial y_k}\frac{\partial y_k}{\partial x_1}$.  Thus
\begin{align*}
   D_y^*(f dx_1\wedge\cdots \wedge dx_n) &= D_y^*(f J dy_1\wedge \cdots \wedge dy_n) \\
   &= - \sum_k \frac{\partial}{\partial y_k} \left(\frac{\partial y_k}{\partial x_1} f J\right) dy_1\wedge \cdots \wedge dy_n \\ 
   &= - \left(\sum_k \frac{\partial f}{\partial y_k} \left(\frac{\partial y_k}{\partial x_1} J\right) + f \frac{\partial}{\partial y_k} \left(\frac{\partial y_k}{\partial x_1} J\right) \right) dy_1\wedge \cdots \wedge dy_n \\
   &= -\frac{\partial f}{\partial x_1} dx_1\wedge\cdots \wedge dx_n - f\sum_k \frac{\partial}{\partial y_k} \left(\frac{\partial y_k}{\partial x_1} J\right) dy_1\wedge \cdots \wedge dy_n \\ 
    &= -\frac{\partial f}{\partial x_1} dx_1\wedge\cdots \wedge dx_n 
 \end{align*}
as desired, where the last equality holds because $\frac{\partial y_k}{\partial x_1} J$ is an element of $A$ 

Furthermore, it is clear that the result holds when $D$ is an element of $R$.  Finally, since $(\cdot)^*_x$ and $(\cdot)^*_y$ are both anti-involutions, if the result holds for two differential operators, then it holds for their composition.  Since elements of $R$ and differential operators of the form $\frac{\partial}{\partial x_i}$ generate all differential operators, we are done.  
\end{proof}

\subsection{Coordinate-free definition of the dual differential}

We now briefly explain how our definition of dual in coordinates arises from the theory of duality outlined in \cite{blog}.
We first recall the EGA definition of a differential operator, for which we fix the following notation: $\theta \colon X \to S$ is morphism of schemes, $X^{(n)}$ denotes the $n$th infinitesimal neighborhood of the diagonal of $X \times_S X$, and we have the following commutative diagram.
\[\begin{tikzcd}
	& {X^{(n)}} \\
	& {X \times_S X} \\
	X && X
	\arrow["{h_n}"', from=1-2, to=2-2]
	\arrow["{p_n}"', curve={height=12pt}, from=1-2, to=3-1]
	\arrow["{q_n}", curve={height=-12pt}, from=1-2, to=3-3]
	\arrow["p"', from=2-2, to=3-1]
	\arrow["q", from=2-2, to=3-3]
\end{tikzcd}\]

The underlying topological space of $X^{(n)}$ is the same as that of $X$, so denoting the structure sheaf of $X^{(n)}$ by $\cP^n$, we can write $X^{(n)} = (X , \cP^n)$.
Then the projections $p_n$ and $q_n$ induce the morphisms $p_n^\#$ and $q_n^\# \colon \cO_X \to \cP^n$, which
endow $\cP^n$ with two structures of an $\cO_X$-module.

\begin{definition}[{\cite[Def. 16.8.1]{EGA4}}]
    Given two $\cO_X$-modules $\cE$ and $\cF$, a \emph{differential operator} is an $\cO_S$-linear map $D \colon \cE \to \cF$ that can be factored as $D = u \circ (q_n^\# \otimes 1_\cE)$ for some $\cO_X$-linear map $u \colon \cP^n \otimes_{q_n, \cO_X} \cE \to \cF$.
    Here the $\cO_X$-linearity is considered with respect to the module structure on $\cP^n$ given by $p_n^\#$, while the tensor product is taken with respect to one given by $q_n^\#$.
\end{definition}

\begin{construction}
\label{construction: coordinate-free dual}
    Given a smooth morphism $\theta \colon X \to S$ and a differential operator $D \colon \cE \to \cF$ corresponding to $u \colon \cP^n \otimes_{q_n, \cO_X} \cE \to \cF$, we have a canonical identification $\cF \cong \cF^{**}$, hence by tensor-Hom adjunction, $u$ corresponds to the morphism $\cF^* \otimes_{\cO_X, p_n} \cP^n \otimes_{q_n, \cO_X} \cE \to \omega_{X/S}$.
    By another tensor-Hom adjunction, the latter corresponds to the morphism denoted 
    \[
        u^* \colon \cF^* \otimes_{\cO_X, p_n} \cP^n \to \cE^*
    , \]
    which is $\cO_X$-linear with respect to the module structure on $\cP^n$ given by $q_n^\#$.
\end{construction}

\begin{remark}
    One can check that in the setting of \cref{construction: coordinate-free dual}, the right adjoint to $Rp_{n*}$ is given by $p_n^! = \localRHom_{p_n, \cO_X} ( \cP^n , \blank )$, and similarly for $q_n$.
    It then follows that \cref{construction: coordinate-free dual} gives rise to the same notion of dual differential operators as described in \cite{blog}.
\end{remark}

\begin{lemma}
    Let $S = \Spec A$ and $X = \Spec R$ for $R = A[x_1, \dots, x_n]$, and fix a differential operator $D \colon R \to R$ corresponding to some $u \colon \cP^n \otimes_{q_n, R} R \to R$.
    Then $D^*$ as defined in \cref{subsection: local description} is the differential operator corresponding to the morphism $u^*$ as defined in \cref{construction: coordinate-free dual},
    i.e. $D^* = u^* \circ (1_\cF \otimes p_n^\#)$.
\end{lemma}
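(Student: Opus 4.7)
The plan is to verify the identity by an explicit computation in local coordinates, unwinding Construction \ref{construction: coordinate-free dual} and comparing directly with the formula of Subsection \ref{subsection: local description}.

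First, I would translate the data into coordinates. Using the canonical isomorphism $\cP^n \otimes_{q_n, R} R \simeq \cP^n$, $u$ becomes a left $R$-linear map (with respect to $p_n^\#$) $u \colon \cP^n \to R$, and is therefore determined by the constants $a_\alpha := u(\xi^\alpha) \in R$, where $\xi_i = 1 \otimes x_i - x_i \otimes 1$ and $\{\xi^\alpha\}_{|\alpha| \le n}$ is the $p_n$-basis. Combining the defining relation $D = u \circ (q_n^\# \otimes 1_\cE)$ with the Taylor expansion
\[
q_n^\#(e) = \sum_\alpha \frac{1}{\alpha!} (\partial^\alpha e)\, \xi^\alpha
\]
recovers $D = \sum_\alpha a_\alpha \cdot \frac{1}{\alpha!} \partial^\alpha$, in the form considered in Subsection \ref{subsection: local description}.

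Next, I would unwind the two tensor-Hom adjunctions in Construction \ref{construction: coordinate-free dual} to extract an explicit coordinate formula for $u^*$. After trivialising $\omega_{X/S}$ by $dx_1 \wedge \dots \wedge dx_n$ so that $\cE^* \simeq R$ and $\cF^* \simeq R$, and using that $u^*$ is $\cO_X$-linear with respect to the $q_n$-action on $\cP^n$, the map is determined by the constants $b_\beta := u^*(1 \otimes \xi^\beta) \in R$. Tracking the composite $\cF^* \otimes_{p_n} \cP^n \otimes_{q_n} \cE \to \omega_{X/S}$ through the evaluation pairings $\cF \otimes_R \cF^* \to \omega_{X/S}$ and $\cE \otimes_R \cE^* \to \omega_{X/S}$, one finds that the adjunction forces
\[
b_\beta = \sum_\gamma \frac{(-1)^{|\gamma|}}{\gamma!} \partial^\gamma a_{\beta+\gamma}.
\]

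Finally, I would evaluate $u^* \circ (1 \otimes p_n^\#)$ on $\phi \in \cF^*$. The composite sends $\phi$ to $u^*(\phi \otimes 1)$, which under the identification $\cF^* \otimes_{p_n} \cP^n \simeq \cP^n$ becomes $u^*(p_n^\#(\phi))$. Inverting the Taylor expansion via the binomial identity $\sum_{\beta \le \delta}(-1)^{|\beta|}\binom{\delta}{\beta} = \delta_{\delta,0}$ gives
\[
p_n^\#(\phi) = \sum_\beta \frac{(-1)^{|\beta|}}{\beta!}(\partial^\beta \phi) \cdot_{q_n} \xi^\beta,
\]
so by $q_n$-linearity of $u^*$,
\[
u^*(p_n^\#(\phi)) = \sum_\beta \frac{(-1)^{|\beta|}}{\beta!} (\partial^\beta \phi)\, b_\beta.
\]
Substituting the formula for $b_\beta$, reindexing by $\alpha = \beta + \gamma$, and recognising the inner sum as a Leibniz expansion recovers $\sum_\alpha \frac{(-1)^{|\alpha|}}{\alpha!} \partial^\alpha(a_\alpha \phi) = D^*(\phi)$, as defined in Equation (\ref{eq: dual diff}).

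The main obstacle is extracting the explicit formula for $b_\beta$ from Construction \ref{construction: coordinate-free dual}: this requires careful bookkeeping of the two distinct $\cO_X$-module structures $p_n^\#$ and $q_n^\#$ on $\cP^n$ through both adjunctions, and of how the identification $\cF \simeq \cF^{**}$ transports a "differentiation" on one side to its formal adjoint on the other. Once that formula is in hand, the remaining combinatorial matching with Equation (\ref{eq: dual diff}) is a routine application of the Leibniz rule.
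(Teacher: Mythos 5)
Your strategy---a direct coordinate computation for an arbitrary finite-order operator---is genuinely different from the paper's proof, which simply declares that it suffices to check the single generator $D = \dd{x_1}$ and then verifies $\left(\dd{x_1}\right)^* = -\dd{x_1}$ by a two-line computation in $\cP^1$. Your endgame is sound: the inverse Taylor expansion of $p_n^\#(\phi)$, the reindexing $\alpha = \beta+\gamma$, and the Leibniz identity $\frac{1}{\alpha!}\partial^\alpha(a_\alpha\phi) = \sum_{\beta+\gamma=\alpha}\frac{1}{\beta!\,\gamma!}(\partial^\gamma a_\alpha)(\partial^\beta\phi)$ do combine to recover $D^*$ exactly as in Equation \eqref{eq: dual diff}, and your claimed formula for $b_\beta$ is the correct one (it reproduces, for instance, $(x_1\dd{x_1})^* = -\dd{x_1}\circ x_1$).

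There is, however, a genuine gap precisely at the step you flag as ``the main obstacle'': the identity $b_\beta = \sum_\gamma\frac{(-1)^{|\gamma|}}{\gamma!}\partial^\gamma a_{\beta+\gamma}$ is asserted, not derived, and deriving it from \cref{construction: coordinate-free dual} is where the entire content of the lemma lives. It is not a routine unwinding: the naive pointwise adjoint identity $\langle u^*(\phi\otimes\xi), e\rangle = \langle\phi, u(\xi\otimes e)\rangle$ does not by itself define an element of $\cE^* = \localHom(\cE,\omega_{X/S})$, because $e\mapsto\langle\phi, u(\xi\otimes e)\rangle$ fails to be $\cO_X$-linear in $e$ (take $u$ to be projection onto the $\xi_1$-coefficient and $\xi = 1$: the map is $e\mapsto\phi\cdot\partial e/\partial x_1$). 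So you cannot read off $b_\beta$ by evaluating the trilinear form on basis elements; you must re-expand $\phi\otimes\xi^\beta$ in the $q_n$-basis of $\cP^n$, using that $\cP^n$ is finite free over both module structures, and that re-expansion is exactly where the corrections $\frac{(-1)^{|\gamma|}}{\gamma!}\partial^\gamma$ enter. As written, your argument only establishes that \emph{if} $u^*$ has the coefficients you claim, \emph{then} $u^*\circ(1_\cF\otimes p_n^\#) = D^*$. For comparison, the paper sidesteps all of this by reducing to the constant-coefficient generator $\dd{x_1}$, for which $u^*$ really is ``the same matrix as $u$''; that reduction is short but leans on an unstated compatibility of the coordinate-free dual with composition. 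If you do fill in the derivation of $b_\beta$, your argument would arguably be more self-contained than the paper's, since it treats non-constant coefficients and the divided-power operators $\frac{1}{(p^k)!}\partial^{p^k}$ in characteristic $p$ uniformly.
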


\begin{proof}
    It's enough to verify the claim for $D = \dd{x_1}$.
    Then $n=1$, and if $I = (1 \otimes x_i - x_i \otimes 1)_{i}$ denotes the ideal of the diagonal, then
    \[ \cP^1 = (R \otimes_A R) / I^2 \cong R \cdot 1 \oplus \bigoplus_{i=1}^{n} R \cdot (1 \otimes x_i - x_i \otimes 1).
    \]
    Then one can check that the corresponding $u \colon \cP^1 \otimes_{q_n, R} R \cong \cP^1 \to R$ is defined as projection on the component $R \cdot (1 \otimes x_1 - x_1 \otimes 1)$, e.g.
    \[
        u \circ (q_n^\# \otimes 1_\cE) (x_1) = u(1 \otimes x_1) = u( x_1 \cdot 1 + 1 \cdot (1 \otimes x_1 - x_1 \otimes 1)) = 1.
    \]
    Under identification $\cP^1 \otimes_{q_n, R} R \cong \cP^1 \cong R \otimes_{p_n, R} \cP^1$, the morphism $u^*$ is identified with $u$.
    Finally, a direct calculation shows that $\left(\dd{x_1} \right)^* = - \dd{x_1}$, e.g.
    \[
        u^* \circ (1_\cF \otimes p_n^\#) (x_1) = u^* (x_1 \otimes 1) = u^* ( x_1 \cdot 1 + (-1) \cdot (1 \otimes x_1 - x_1 \otimes 1)) = -1 .
        \qedhere
    \]
\end{proof}

\section{Filtered K\"unneth formula}
\label{sec: kunneth}

%
\subsection{Setup}
\label{subsection: setup of filtered Kuenneth}
Given a morphism of schemes $\theta \colon X\rightarrow S$, we want to study objects that generalise the de Rham complex, namely complexes of quasi-coherent sheaves on $X$ whose differentials are $S$-differential operators.
Moreover, we would like to consider filtrations on such objects.
The theory of filtered derived categories
we present now is based on the treatment in Illusie's thesis \cite[Chapter V]{Illusie} and \cite{stacks-project}*{05RX}.
These sources mainly treat the case of the filtered derived category of an abelian category, while the category of coherent $\cO_X$-modules with differential operators as morphisms is not abelian.
In order to sidestep this issue, we first define, for a pair of abelian categories $\cA$ and $\cS$ with an additive functor $\cA \to \cS$, the homotopy category of what we call \emph{$\cS$-complexes}, namely $\ZZ$-indexed tuples of objects in $\cA$ whose images in $\cS$ form a complex.
Since we would like to require $\cS$ to be abelian, we apply this construction to $\cA = \QCoh X$ and $\cS = \Mod (\theta^{-1} \cO_S)$, then take the full subcategory of those $\cS$-complexes whose differentials are $S$-differential operators.
The homological algebra that ensues will be done in this category and its localisation with respect to filtered quasi-isomorphisms.

To set the stage, we consider the following cartesian square 
\begin{equation}
\label{commdiag: setup}
\begin{tikzcd}
	& {X \times_S Y} \\
	X && Y \\
	& S
	\arrow["p"', from=1-2, to=2-1]
	\arrow["q", from=1-2, to=2-3]
	\arrow["\theta"', from=2-1, to=3-2]
	\arrow["\eta", from=2-3, to=3-2]
	\arrow["f"{description}, from=1-2, to=3-2]
\end{tikzcd}
\end{equation}

and assume that $X$ and $Y$ are quasi-compact with affine diagonal and
$S = \Spec A$ is affine. 
We further assume that $L$ (resp. $M$) is a bounded $S$-differential complex
in the filtered derived category, consisting of $S$-flat $\O_X$- (resp. $\O_Y$-) modules (we will define these terms precisely).    

The goal of this section is, first, to make sense of the formula
\[
R\theta_*(L) \Lotimes_{\O_S} R\eta_*(M) \to Rf_* (\Tot (L \boxtimes M)),
\]
and second, to prove that it is an isomorphism under certain assumptions.

In the sequel, we will apply this to the case $X=Y$ and
$L = M = (\cE \xrightarrow{D} \cF)$.  

\subsection{Filtered derived categories}

\begin{definition}
    For an abelian category $\A$, we will define the \emph{filtered derived category $\DF(\cA)$ with uniformly bounded filtrations} following the same process as in \cite{stacks-project}*{\href{https://stacks.math.columbia.edu/tag/05RX}{05RX}}.
    \begin{itemize}
        \item Let $\operatorname{C}(\A)$ be the category of complexes of objects in $\cA$.
        \item $\Fil^f(\A)$ is the category of finite filtered objects in $\cA$, i.e. objects with finite filtrations (see \cite{stacks-project}*{\href{https://stacks.math.columbia.edu/tag/05RY}{05RY}}).
        \item Let $\CF(\cA)$ be the full subcategory of $\operatorname{C}(\Fil^f(\A))$ consisting of complexes $L$ for which
        the filtration is uniformly bounded, i.e. there exists $N > 0$ such that $F^{-N} L = L$ and $F^N(L) = 0$.
        \item 
        We denote by $\KF(\cA)$ the homotopy category of $\CF(\cA)$.
        \item Subcategory $\FAc(\cA)$ and multiplicative system $\FQis(\cA)$ are defined analogously to \cite{stacks-project}*{\href{https://stacks.math.columbia.edu/tag/05RZ}{05RZ}, \href{https://stacks.math.columbia.edu/tag/05S1}{05S1}}.
        In particular, a morphism $\varphi$ is a \textit{filtered quasi-isomorphism} if $\operatorname{gr}(\varphi)$ is a quasi-isomorphism, and we write 
        $\varphi \in \FQis(\cA)$.
        \item We then define 
        $\DF(\cA) = \KF(\cA) / \FAc(\cA) =
        \left( \FQis(\cA) \right)^{-1} \KF(\cA)$.
    \end{itemize}
\end{definition}

\begin{remark}
    An adaptation of the standard argument (e.g., \cite{stacks-project}*{\href{https://stacks.math.columbia.edu/tag/05RW}{05RW}}) shows that $\DF(\cA)$ is naturally identified with a full subcategory of $\operatorname{DF}(\cA)$ (as defined in \cite[{\href{https://stacks.math.columbia.edu/tag/05S2}{05S2}}]{stacks-project}).
\end{remark}

\subsection{Filtered \texorpdfstring{$\cS$}{S}-complexes}


We will now formalise the setting described in \cref{subsection: setup of filtered Kuenneth}.
Let $\cA$ and $\cS$ be two abelian categories, and let $\sigma \colon \cA \to \cS$ be an additive functor.
We want to make sense of what it means to be a complex of objects in $\cA$ whose differentials are in $\cS$.
We recall that 
$\Fil^f(\cA)$ is the category of \emph{finite filtered objects} in $\cA$, i.e. objects with finite filtrations (see \cite{stacks-project}*{\href{https://stacks.math.columbia.edu/tag/05RY}{05RY}}).

\begin{definition}
    We define $\CF(\cA, \cS)$ to be the category whose objects are tuples 
    \[
        (L^i, d^i \colon \sigma (L^i) \to \sigma (L^{i+1}))_{i \in \ZZ}
        \text{, where}
    \]
    \begin{itemize}
        \item $L^i \in \Fil^f (\cA)$,
        \item $(\sigma(L) , d)$ is a complex of objects in $\Fil^f (\cS)$, and
        \item the filtration of $L$ is uniformly bounded.
    \end{itemize}
    A morphism $\varphi \colon (L,d_L) \to (M, d_M)$ is given by a tuple $\varphi \colon L^i \to M^i$ of morphisms in $\Fil^f (\cA)$ such that $\sigma(\varphi)$ defines a morphism of complexes over $\Fil^f (\cS)$.
    We call $\CF (\cA , \cS)$ the \emph{category of filtered $\cS$-complexes over $\cA$ with uniformly bounded filtrations}.
\end{definition}

\begin{remark}
    One can similarly define the \emph{category of $\cS$-complexes over $\cA$} without filtrations, and the subsequent lemmas will still hold for formal reasons.
    Since we won't use it in the sequel, we omit the obvious definition and statements for the sake of brevity.
\end{remark}

\begin{definition}
    Given two $\cS$-complexes $L, M \in \CF(\cA, \cS)$ and a morphism $\varphi \colon L \to M$, we say that $\varphi$ is \emph{null-homotopic} if there exists a collection of morphisms 
    $h = \left(h^i \colon L^i \to M^{i-1} \right)$ in $\cA$ such that $d_M \circ \sigma (h) + \sigma (h) \circ d_L = \sigma (\varphi)$ in $\cS$.
    We denote $\KF (\cA, \cS)$ the quotient of $\CF (\cA, \cS)$ by null-homotopic maps, and call it the \emph{homotopy category of filtered $\cS$-complexes over $\cA$ with uniformly bounded filtrations}.
\end{definition}

\begin{definition}[{cf. \cite{stacks-project}*{\href{https://stacks.math.columbia.edu/tag/05RZ}{05RZ}}}]
    An $\cS$-complex $L \in \KF (\cA , \cS)$ is called \emph{filtered acyclic} if the complex $\gr(\sigma(L))$ is acyclic.
    The full subcategory of such will be denoted $\FAc (\cA , \cS)$.
    A morphism of $\cS$-complexes $\varphi \colon L \to M$ is called a \emph{filtered quasi-isomorphism} if $\gr(\sigma(\varphi))$ is a quasi-isomorphism.
    The collection of all such morphisms will be denoted $\FQis(\cA, \cS)$.
\end{definition}

Similarly to \cite{stacks-project}*{\href{https://stacks.math.columbia.edu/tag/014E}{014E}}, we define the \emph{cone} $C(\varphi)$ of a morphism $\varphi \colon L \to M$ as a pair $(M \oplus L[1], d_{C(\varphi)})$, where
$d_{C(\varphi)} = \left[ \begin{smallmatrix}
d_M & \sigma (\varphi) \\ 0 & -d_L
\end{smallmatrix} \right]$.

\begin{lemma}
\label{lemma: KF is triangulated}
    Let $\cA$ and $\cS$ be two abelian categories, and let $\sigma \colon \cA \to \cS$ be an additive functor.
    Then $\KF(\cA, \cS)$ with the collection of distinguished triangles 
    \[
        \left\{ L \xrightarrow{\varphi} M 
        \xrightarrow{\left[ \begin{smallmatrix} 1 \\ 0 \end{smallmatrix} \right]}
        C(\varphi)
        \xrightarrow{\left[ \begin{smallmatrix} 0 & 1 \end{smallmatrix} \right]} L[1] \right\},
    \]
    where $\varphi$ ranges over all morphisms of $\cA$,
    forms a triangulated category.
\end{lemma}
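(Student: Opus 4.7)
The plan is to imitate the standard proof that the homotopy category $K(\cB)$ of an abelian category $\cB$ is triangulated (e.g.\ \cite[\href{https://stacks.math.columbia.edu/tag/014S}{Tag 014S}]{stacks-project}), verifying at each step that the constructions survive the passage to $\cS$-complexes over $\cA$.

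First I would check that $\KF(\cA, \cS)$ is additive and that the shift $[1]$ and cone $C(\varphi)$ are well-defined as $\cS$-complexes with uniformly bounded filtration. Biproducts are constructed componentwise in $\Fil^f(\cA)$, and the bound on the filtration is preserved. The shift $L[1]$ re-indexes and flips the sign of the differential. The cone differential
\[
d_{C(\varphi)} = \begin{bmatrix} d_M & \sigma(\varphi) \\ 0 & -d_L \end{bmatrix}
\]
is a well-defined morphism in $\cS$: the off-diagonal entry $\sigma(\varphi)$ is exactly what is permitted for the differential of an $\cS$-complex. One then checks that null-homotopy is a two-sided ideal in $\CF(\cA, \cS)$, so $\Hom_{\KF(\cA,\cS)}(-,-)$ is an abelian group and composition descends.

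Next I would verify TR1--TR4 in turn. TR1 is immediate from the cone construction together with the identity triangle, and closure under isomorphism is a formal check. For TR2, the rotation equivalence between the rotation of $(L, M, C(\varphi))$ and the cone triangle of $M \to C(\varphi)$ is given by the classical formulas, assembled from sign-adjusted summand inclusions and projections. For TR3, given a square commuting in $\KF(\cA, \cS)$, one picks an actual null-homotopy $h$ witnessing the commutativity (with components $h^i$ in $\cA$) and defines the map between cones by the usual formula $\begin{bmatrix} g & \sigma(h) \\ 0 & f[1] \end{bmatrix}$. TR4 then follows by the standard explicit octahedral construction, again built out of summand projections/inclusions and the given data.

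The main potential obstacle is to confirm that the various auxiliary morphisms in these constructions -- the rotation equivalences, the maps between cones, and the components of the octahedral diagram -- are genuinely morphisms in $\CF(\cA, \cS)$, i.e.\ that their components are honest morphisms in $\cA$ (not merely in $\cS$). Inspecting the classical formulas, each component is built from identity maps, summand inclusions and projections, sign flips, and the given morphisms $\varphi, f, g$ and homotopies $h^i$, all of which lie in $\cA$ by hypothesis. The functor $\sigma$ only appears in the differentials of cones and in the homotopy relation itself, where its presence is allowed. Once this compatibility is checked once, the remainder is a direct transcription of the standard argument for the homotopy category of an abelian category.
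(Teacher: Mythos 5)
Your proposal is correct and follows essentially the same route as the paper, which likewise observes that the standard constructions for TR1--TR4 (as in the Stacks Project's proof that the homotopy category is triangulated) carry over because all auxiliary morphisms are built from summand inclusions/projections, the given maps, and homotopy components, none of which involve the differentials, and hence all live in $\cA$. Your version simply spells out this key compatibility check in more detail than the paper does.
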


\begin{proof}
    One observes that the standard construction of necessary homotopies (e.g. \cite{stacks-project}*{\href{https://stacks.math.columbia.edu/tag/014S}{014S}} and references therein) when checking the axioms (TR1--TR4) still works,
    i.e. they do not use components of differentials and hence exist on the level of $\cA$.
\end{proof}

\begin{remark}
\label{remark: FAc is a saturated subcat}
    Analogously to the proof of \cite{stacks-project}*{\href{https://stacks.math.columbia.edu/tag/05S1}{05S1}}, one observes that $H^0 \circ \gr \circ \sigma$ is a homological functor $\KF (\cA, \cS) \to \cS$,
    hence by \cite{stacks-project}*{\href{https://stacks.math.columbia.edu/tag/05RM}{05RM}},
    $\FAc(\cA, \cS)$ is a saturated triangulated subcategory of $\KF(\cA, \cS)$, and the corresponding saturated multiplicative system is $\FQis (\cA , \cS)$.
    One can then define the filtered derived category of $\cS$-complexes; we abstain from doing it, for we need a further refinement to suit our purposes. 
\end{remark}


\subsection{\texorpdfstring{$S$}{S}-differential complexes.}

Given an $S$-scheme $X \xrightarrow{\theta} S$,
we can consider the category $\CF (\QCoh (X), \Mod(\theta^{-1}\cO_S))$ of filtered $\Mod(\theta^{-1}\cO_S)$-complexes with respect to the natural forgetful functor
$\sigma \colon \QCoh (X) \to \Mod(\theta^{-1}\cO_S)$.

\begin{definition}
    For an $S$-scheme $X \xrightarrow{\theta} S$,
    we define a \emph{filtered $S$-differential complex} $L$ 
    as an object of $\CF (\QCoh (X), \Mod(\theta^{-1}\cO _S))$ such that in addition the differentials 
    \[ d^i \colon \sigma(L^i) \to \sigma(L^{i+1}) \]
    come from finite order $S$-differential operators between the $\cO_X$-modules $D^i \colon L^i \to L^{i+1}$, i.e $d^i = \sigma (D^i)$.
    We denote the full subcategory of $S$-differential complexes by $\CFD (X,S)$.
    Using the notion of null-homotopy from $\CF (\QCoh (X), \Mod(\theta^{-1}\cO _S))$, we define the \emph{homotopy category of filtered $S$-differential complexes} $\KFD (X,S)$ as the quotient of $\CFD (X,S)$ by null-homotopic maps.
\end{definition}

Since the forgetful functor $\sigma$ doesn't change the underlying sheaf of sections, we will suppress it from the notation from now on.

The results of \cref{lemma: KF is triangulated} and \cref{remark: FAc is a saturated subcat} hold for $\KFD (X,S)$ as well,
and we denote
\begin{align*}
    \FDAc (X,S) &:= \FAc (\QCoh (X), \Mod(\theta^{-1}\cO_S)) \cap \KFD (X,S) , \\
    \FDQis (X,S) &:= \FDQis (\QCoh (X), \Mod(\theta^{-1}\cO_S)) \cap \KFD (X,S) .
\end{align*}
With this, we are in a position to make the following definition.

\begin{definition}
    The \emph{filtered derived category of $S$-differential complexes with uniformly bounded filtrations} is 
    \[
    \DFD(X,S) := 
    \KFD(X,S) / {\FDAc}(X,S) =
        \left( \FDQis (X,S) \right)^{-1} \KFD (X,S).
    \]
\end{definition}



Since pulling along $p \colon Z \to X$ is an $X$-linear operation, it is not automatic that pulling back an $S$-linear differential operator will be well-defined.
However, the pullback along a projection map $p\colon X\times_S Y\rightarrow X$ will take an $S$-differential complex on $X$ to one on $X\times_S Y$.  In fact, we have the following lemma.   

\begin{lemma}
    With notation as in \cref{commdiag: setup}, 
    given filtered $S$-differential complexes $L$ and $M$ on $X$ and $Y$, respectively, we can define the filtered $S$-differential double complex $L \boxtimes M$ with
    \[
        (L \boxtimes M)^{i,j} = 
        p^* (L^i) \otimes_{\mathcal{O}_{X \times_S Y}}
        q^* (M^j)
    \]
    and differentials 
    \[
        d_L \boxtimes 1_M
        \quad \text{and} \quad
        1_L \boxtimes d_M.
    \]
\end{lemma}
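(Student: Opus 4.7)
The plan is to check three things in sequence: that each term $p^*(L^i)\otimes_{\cO_{X\times_S Y}} q^*(M^j)$ lies in $\Fil^f(\QCoh(X\times_S Y))$ with uniformly bounded filtration, that the proposed horizontal and vertical arrows are genuine $S$-differential operators on $X\times_S Y$, and that the resulting squares commute so that we actually obtain a double complex.

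The first and third checks are essentially formal. The pullbacks $p^*L^i$ and $q^*M^j$ inherit uniformly bounded finite filtrations from $L^i$ and $M^j$, and hence so does their tensor product, via $F^k = \sum_{a+b=k} F^a p^*L^i \otimes F^b q^*M^j$. Once the differentials are in hand, the fact that they differentiate in disjoint sets of directions --- those pulled back from $X$ versus those pulled back from $Y$ --- implies that each square in the double complex commutes on the nose.

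The substantive step is producing $d_L \boxtimes 1_M$ as an $S$-differential operator on $X\times_S Y$. Starting from the EGA datum $u\colon \cP^n_{X/S} \otimes_{q_n, \cO_X} L^i \to L^{i+1}$ representing the $S$-differential operator $d_L$ of order at most $n$, I would first invoke the base-change compatibility of relative principal parts, namely $p^*\cP^n_{X/S} \cong \cP^n_{(X\times_S Y)/Y}$. Pulling $u$ back along $p$ yields an $\cO_{X\times_S Y}$-linear map $\cP^n_{(X\times_S Y)/Y} \otimes p^*L^i \to p^*L^{i+1}$, that is, a $Y$-differential operator on $X\times_S Y$. Since a $Y$-differential operator is in particular an $S$-differential operator --- equivalently, the natural surjection $\cP^n_{(X\times_S Y)/S} \twoheadrightarrow \cP^n_{(X\times_S Y)/Y}$ lets us precompose to obtain an $\cO_{X\times_S Y}$-linear map $\cP^n_{(X\times_S Y)/S} \otimes p^*L^i \to p^*L^{i+1}$ --- we obtain an honest $S$-differential operator $p^*L^i \to p^*L^{i+1}$. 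Tensoring with $1_{q^*M^j}$ preserves this structure, with $q^*M^j$ playing the role of a coefficient module on which no differentiation occurs, and filtration compatibility is inherited from that of $d_L$. The symmetric argument via $q$ produces $1_L \boxtimes d_M$.

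The main obstacle is careful bookkeeping around the two $\cO_X$-module structures on $\cP^n_{X/S}$ (given by $p_n^\#$ and $q_n^\#$) and checking that the base-change identification $p^*\cP^n_{X/S} \cong \cP^n_{(X\times_S Y)/Y}$ is compatible with both structures after pullback. Once these identifications are correctly set up, the remaining verifications --- that tensoring an $S$-differential operator with the identity on a coefficient sheaf preserves the differential-operator property and respects the tensor-product filtration --- reduce to routine checks.
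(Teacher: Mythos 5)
Your proposal is correct in substance, but it takes a genuinely different route from the paper: the paper's entire proof is a citation of \cite{stacks-project}*{\href{https://stacks.math.columbia.edu/tag/0G45}{0G45}}, which constructs $D \boxtimes D'$ affine-locally by induction on the total order, using the characterisation of differential operators via iterated commutators $[D,f]$. You instead argue through the EGA principal-parts definition: base change of principal parts $p^*\cP^n_{X/S} \cong \cP^n_{(X\times_S Y)/Y}$, followed by the surjection $\cP^n_{(X\times_S Y)/S} \twoheadrightarrow \cP^n_{(X\times_S Y)/Y}$. Your approach is more structural and makes transparent \emph{why} the construction works (the operator lives in the $X$-directions and is genuinely $Y$-linear), at the cost of the module-structure bookkeeping you acknowledge; the Stacks Project argument is more elementary and gives the order bound $k+k'$ for the general $D \boxtimes D'$ with essentially no setup.

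One step deserves more care than your phrasing suggests. ``Tensoring with $1_{q^*M^j}$'' is not a generic operation on differential operators: for an arbitrary $\cO_Z$-module $\cC$ and a non-$\cO_Z$-linear map $\tilde D$, the assignment $a \otimes c \mapsto \tilde D(a) \otimes c$ does not descend to $\blank\otimes_{\cO_Z}\cC$, since it need not respect the relation $(fa)\otimes c = a \otimes (fc)$. What saves you is precisely that $q^*M^j$ is pulled back from $Y$, so that
\[
p^*L^i \otimes_{\cO_{X\times_S Y}} q^*M^j \;\cong\; p^*L^i \otimes_{q^{-1}\cO_Y} q^{-1}M^j,
\]
and the pulled-back operator $p^*L^i \to p^*L^{i+1}$, being a $Y$-differential operator, is $q^{-1}\cO_Y$-linear; this is exactly enough for $\tilde D \otimes 1$ to be well defined. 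In particular the well-definedness should be checked \emph{before} you forget down from $Y$-differential operators to $S$-differential operators (or at least using the $Y$-linearity, which persists), and the verification that the resulting map is an $S$-differential operator then proceeds via the principal-parts surjection as you describe. With that adjustment the argument goes through.
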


\begin{proof}
    Apply \cite{stacks-project}*{\href{https://stacks.math.columbia.edu/tag/0G45}{0G45}}.
    \qedhere
\end{proof}

We note that the construction in this lemma is carried out for complexes in $\CFD (X,S)$ and $\KFD (X,S)$, and we do not attempt to pass it to the level of derived categories.

\subsection{Derived functors for filtered derived categories} 
To prove existence of the derived tensor product for a filtered derived category, and to compute it,
we show that for each object, we can find a filtered quasi-isomorphism from a complex which is filtered K-flat (i.e. adapted to computing derived tensor product).

\begin{lemma}
\label{lemma: filtered resolution with surjections}
    Let $\mathcal{A}$ be a cocomplete abelian category with exact direct sums, and let $\cA \to \cS$ be an additive functor that commutes with arbitrary direct sums. 
    Fix a subclass of objects $\mathcal P$ in $\cA$ closed under arbitrary direct sums.
    Assume that every object of $\cA$ functorially admits a surjection from an object of $\mathcal P$.
    Then any filtered complex $L \in \CF(\mathcal A)$ or $L \in \CF(\cA,\cS)$ admits a filtered quasi-isomorphism $P(L) \to L$ such that:
    \begin{itemize}
        \item $P(L) \to L$ is term-wise surjective, and
        \item for every $i, m \in \mathbb Z$, both $F^i(P(L)^m)$ and $\operatorname{gr^i}(P(L)^m)$
        are in $\mathcal P$;
        \item moreover, if the objects in $\cP$ were adapted to computing a certain derived functor commuting with arbitrary direct sums, then $P(L)$, $F^i(P(L))$ and $\operatorname{gr^i}(P(L))$ are also adapted to computing this derived functor.
    \end{itemize}
\end{lemma}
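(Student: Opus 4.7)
My plan is to imitate the standard construction of K-flat/K-projective resolutions, adapted to the filtered setting using the weaker hypothesis of functorial surjections from $\cP$. The construction proceeds first for a single filtered object, and is then extended to complexes by an iteration that produces a Cartan--Eilenberg-type bicomplex.

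First, for a single filtered object $A \in \Fil^f(\cA)$ with bounded decreasing filtration, I apply the hypothesis at each level $j$ to pick a functorial surjection $T(F^j A) \twoheadrightarrow F^j A$ with $T(F^j A) \in \cP$, then define $P^{(0)}(A) := \bigoplus_j T(F^j A)$ equipped with the split filtration $F^k P^{(0)}(A) := \bigoplus_{j \geq k} T(F^j A)$. Closure of $\cP$ under direct sums ensures both $F^k P^{(0)}(A)$ and $\gr^k P^{(0)}(A) = T(F^k A)$ lie in $\cP$, and the natural filtered map $P^{(0)}(A) \to A$ (summing the surjections $T(F^j A) \to F^j A \hookrightarrow A$) is termwise surjective and surjective on each graded piece.

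To extend to a complex $L^\bullet \in \CF(\cA)$ or $\CF(\cA, \cS)$, I apply this construction termwise and use the functoriality of $T$ to propagate the differentials of $L$. The resulting termwise-surjective map $P^{(0)}(L)^\bullet \to L^\bullet$ is not yet a filtered quasi-isomorphism, so I iterate: form the filtered kernel complex $K^{(0)} = \ker(P^{(0)}(L) \to L)$, apply the one-step construction again to produce $P^{(1)}(L) := P^{(0)}(K^{(0)}) \to K^{(0)}$, and continue. The total complex of the resulting bicomplex with the inherited split filtration is the desired $P(L) \to L$: it is termwise surjective by construction; each $F^i P(L)^m$ and $\gr^i P(L)^m$ is a direct sum of objects in $\cP$, hence in $\cP$; and $\gr^i P(L)^\bullet \to \gr^i L^\bullet$ is a $\cP$-resolution of $\gr^i L^\bullet$, hence a quasi-isomorphism. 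The final bullet then follows because any direct sum of objects adapted to a derived functor commuting with arbitrary direct sums is itself adapted.

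The main obstacle is the iterative total-complex step: one must ensure that each kernel complex inherits a compatible uniformly bounded filtration, verify $d^2 = 0$ with correct signs under the staggered indexing, and confirm that the total complex lies in $\CF(\cA)$ (respectively $\CF(\cA, \cS)$) with uniformly bounded filtration. An alternative that avoids iterated kernels altogether is a Godement/bar-style resolution built from the natural transformation $T \Rightarrow \id$, which produces a functorial complex $P(L) \to L$ directly from the simplicial object $T^{\bullet+1}(L)$; this bypasses any non-additivity of $T$ at the cost of somewhat larger direct sums, which remain in $\cP$ by the closure assumption.
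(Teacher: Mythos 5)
Your construction is correct in its main thrust for $\CF(\cA)$, but it takes a genuinely different route from the paper's. The paper first produces, by the standard Cartan--Eilenberg-type argument (the dual of the cited Stacks Project tags), a full functorial resolution $P'(M)\to M$ for each subobject $M=F^i(L)$, and then confronts the problem that the induced maps $P'(F^{i+1}(L))\to P'(F^i(L))$ need not be injective, so they do not directly define a filtration. This is repaired \emph{a posteriori} by a Rees-type mapping-cone construction: $F^i(P(L))$ is taken to be the cone $Q(i)$ of $\operatorname{Rees}(i+1)=\bigoplus_{j\geq i+1}P'(F^j(L))\to \operatorname{Rees}(i)$, so that $Q(i+1)\to Q(i)$ is a direct-summand inclusion and $Q(i)/Q(i+1)$ is the cone of $P'(F^{i+1}(L))\to P'(F^i(L))$, which computes $\gr^i(L)$. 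You instead arrange the filtration correctly from the very first step: the split filtration $F^kP^{(0)}(A)=\bigoplus_{j\geq k}T(F^jA)$ has honest direct-summand inclusions, so the injectivity problem never arises; the price is that you must then iterate kernels and totalize to upgrade a one-step surjection to a quasi-isomorphism. Your claim that $\gr^i$ of the totalization resolves $\gr^i(L)$ is the analogue of the paper's computation of $Q(i)/Q(i+1)$, and it does hold: one needs the observation that $F^{i+1}K^{(0)}=F^iK^{(0)}\cap F^{i+1}P^{(0)}(L)$, so that by the nine lemma $\gr^i(K^{(0)})=\ker(\gr^iP^{(0)}(L)\to\gr^i(L))$, and then the graded rows of the bicomplex are exact by induction. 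Each approach buys something: yours avoids the Rees/cone machinery entirely, while the paper's avoids your convergence discussion for the totalization of a resolution unbounded in the homological direction.

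There is, however, one step you should not leave as stated: defining the differentials of $P^{(0)}(L)$ by ``functoriality of $T$.'' First, $T$ is only assumed to be a functor, not an additive one, so $T(d)\circ T(d)=T(d\circ d)=T(0)$ need not vanish; your Godement/bar alternative repairs this, as does choosing $T$ additive (which is possible in the intended application). Second, and more seriously, for $L\in\CF(\cA,\cS)$ the differentials of $L$ are morphisms $\sigma(L^m)\to\sigma(L^{m+1})$ in $\cS$, not morphisms in $\cA$, so neither $T$ nor its bar construction can be applied to them, and the kernel $K^{(0)}$ of the augmentation is a priori only an object of $\cS$ rather than of $\cA$, which obstructs the next step of your iteration. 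To be fair, the paper's proof is equally terse on how $P'(L)$ acquires $\cS$-differentials compatible with those of $L$ (the cited standard argument resolves kernels and images of the differentials, which for an $\cS$-complex do not live in $\cA$), so this is a gap you share with the paper rather than one you introduce; but because your mechanism for producing the differentials is more explicit, the failure is more visible in your write-up and deserves an explicit fix in the $\cS$-complex case.
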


\begin{proof}
    By a standard argument (e.g., the dual to \cite{stacks-project}*{\href{https://stacks.math.columbia.edu/tag/013K}{013K} and \href{https://stacks.math.columbia.edu/tag/079P}{079P}}), existence of functorial surjections from $\mathcal P$ implies that any complex $L \in C(\A)$ admits a functorial term-wise surjection from a complex of objects from $\mathcal P$, which we denote by $\pi \colon P'(L) \to L$, such that $P'(L)$ is adapted to computing the left derived functor. Further, for each step of filtration, we get its own resolution, and thus, by functoriality of $P'$, we have a commutative diagram of complexes
\[\begin{tikzcd}
	{P'(F^i(L))} && {F^i(L)} \\
	\\
	{P'(L)} && L
	\arrow["{\pi^i}", two heads, from=1-1, to=1-3]
	\arrow["{\pi}", two heads, from=3-1, to=3-3]
	\arrow["{P(\iota^i)}"', from=1-1, to=3-1]
	\arrow["\iota^i", hook, from=1-3, to=3-3]
\end{tikzcd}\]
    where the horizontal arrows are term-wise surjections. Note that 
    $P'(\iota^i) \colon P'(F^i(L)) \to P'(L)$ is not necessarily an injection, so we will modify $P'(L)$ into $P(L)$ in such a way that the $P'(F^i(L))$ induce a desired filtration on $P(L)$. We will adapt the use of the Rees construction as was done in \cite{MR3581172}*{Section 3}.

    To this end, define 
    \[
        \operatorname{Rees}(i) := \bigoplus_{j \geq i} P(F^j(L))
        .
    \]
    Note that there is a natural identification $\operatorname{Rees}(i) \cong P'(F^i(L)) \oplus \operatorname{Rees}({i+1})$. We use it to define the morphism $\alpha \colon \operatorname{Rees}({i+1}) \to \operatorname{Rees}(i)$ as 
    \[
    \alpha :=
    \begin{bmatrix}
        P'(\iota) \\ 1
    \end{bmatrix}
    \oplus 1_{\operatorname{Rees}(i+2)},
    \]
    where
    \[
    \begin{bmatrix}
        P'(\iota) \\ 1
    \end{bmatrix}
    \colon 
    P'(F^{i+1}(L)) \to
    P'(F^{i}(L)) \oplus P'(F^{i+1}(L))
    ,
    \]
    and $\iota$ denotes the inclusion $\iota = \iota^{i+1,1} \colon F^{i+1}(L) \to F^i(L)$.
    Denote by $Q(i)$ the cone of this morphism:
    \[
        \operatorname{Rees}({i+1}) \xrightarrow{\alpha}
        \operatorname{Rees}({i}) \to Q(i) .
    \]
    We note that $Q(i)$ is quasi-isomorphic to $P'(F^i(L))$, because the above triangle is quasi-isomorphic to the direct sum of two distinguished triangles: $P'(F^{i+1}) \to P'(F^{i}) \oplus P'(F^{i+1}) \to P'(F^{i})$ and $\operatorname{Rees}(i+2) \to \operatorname{Rees}(i+2) \to 0$.

    Now we can define the desired complex $P(L)$. Since we are interested in the case of uniformly bounded filtrations, we can choose $N$ such that $F^{-N}(L) = L$ and $F^N(L) = 0$. We then set
    \[
    F^i(P(L)) = 
    \begin{cases}
        Q(-N) & \text{for } i < -N, \\
        Q(i) & \text{for } -N \leq i \leq N,\\
        0 & \text{for } N < i.
    \end{cases}
    \]
    This constructions yields a filtered complex $P(L) : = Q(-N)$, where the inclusions $Q(i+1) \to Q(i)$ are given by inclusions of direct summands. Each $F^i(P(L))$ is quasi-isomorphic to $P'(F^i(L))$ and hence to $F^i(L)$.

\end{proof}

\begin{lemma}
\label{lemma: deriving filtered tensor}
    Let $(S, \mathcal O_S)$ be a ringed space.
    Then there is a derived functor $\otimes^{\mb{L}}_{\mathcal O_S}$ defined on the filtered derived category $DF_b (\mathcal O_S) := DF_b (\operatorname{Mod} (\mathcal O_S))$ with uniformly bounded filtrations, and this functor commutes with $\operatorname{gr}$:
    \[\begin{tikzcd}
    	{DF_b (\mathcal O_S) \times DF_b (\mathcal O_S)} && {DF_b (\mathcal O_S)} \\
    	\\
    	{\displaystyle \left( \bigoplus_{\mathbb Z} D (\mathcal O_S) \right) \times \left( \bigoplus_{\mathbb Z} D (\mathcal O_S) \right)} && {\displaystyle \left( \bigoplus_{\mathbb Z} D (\mathcal O_S) \right)}
    	\arrow["{\otimes^{\mb{L}}_{\mathcal O_S}}", from=1-1, to=1-3]
    	\arrow["{\operatorname{gr}}"', from=1-1, to=3-1]
    	\arrow["{\otimes^{\mb{L}}_{\mathcal O_S}}", from=3-1, to=3-3]
    	\arrow["{\operatorname{gr}}", from=1-3, to=3-3]
    \end{tikzcd}\]
\end{lemma}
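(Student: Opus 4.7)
The plan is to construct $\Lotimes_{\cO_S}$ via filtered resolutions supplied by the preceding lemma and observe that $\gr$-compatibility is built into the convolution-filtration formula for the tensor product.

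First, apply the preceding lemma to $\cA = \cS = \Mod(\cO_S)$ (with $\sigma = \id$) and $\cP$ the class of flat $\cO_S$-modules: functorial free resolutions provide the required surjections, and flats are closed under arbitrary direct sums. This produces, for every $L \in \CF(\cO_S)$, a filtered quasi-isomorphism $P(L) \to L$ whose underlying complex, filtered pieces, and graded pieces are all term-wise flat. Because the filtration is uniformly bounded, $P(L)$ sits in a finite tower of extensions by the $\gr^i P(L)$, so to compute $\Lotimes$ using $P(L)$ it suffices that each graded piece be K-flat; in the unbounded (cohomological) setting one arranges this by enlarging $\cP$ to include K-flat complexes and reapplying the lemma, using the clause on transfer of adaptedness.

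Next, equip $L \otimes_{\cO_S} M$ (with the total differential) with the convolution filtration
\[
F^p(L \otimes_{\cO_S} M) := \sum_{i+j=p} \operatorname{image}\bigl(F^i L \otimes_{\cO_S} F^j M \to L \otimes_{\cO_S} M\bigr),
\]
so that $\gr^p(L \otimes_{\cO_S} M) \cong \bigoplus_{i+j=p} \gr^i L \otimes_{\cO_S} \gr^j M$ and uniform boundedness is preserved. Define
\[
L \Lotimes_{\cO_S} M := P(L) \otimes_{\cO_S} P(M) \in \DF(\cO_S).
\]
For well-definedness on $\DF(\cO_S)$: if $\varphi \colon L \to L'$ is a filtered quasi-isomorphism and $N$ has K-flat $\gr^j N$ for every $j$, then $\gr^p(\varphi \otimes 1_N)$ equals
\[
\bigoplus_{i+j=p} \gr^i L \otimes_{\cO_S} \gr^j N \longrightarrow \bigoplus_{i+j=p} \gr^i L' \otimes_{\cO_S} \gr^j N,
\]
a direct sum of quasi-isomorphisms by K-flatness of each $\gr^j N$. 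Symmetric reasoning in the second variable together with functoriality of $P$ shows that $\Lotimes_{\cO_S}$ descends to a well-defined bifunctor on $\DF(\cO_S) \times \DF(\cO_S)$.

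Commutativity with $\gr$ is then immediate: each $\gr^i P(L) \to \gr^i L$ is a K-flat resolution, so
\[
\gr^p\bigl(P(L) \otimes_{\cO_S} P(M)\bigr) = \bigoplus_{i+j=p} \gr^i P(L) \otimes_{\cO_S} \gr^j P(M) \simeq \bigoplus_{i+j=p} \gr^i L \Lotimes_{\cO_S} \gr^j M,
\]
which is the image of $(\gr L, \gr M)$ under the lower horizontal arrow of the square. The main obstacle is the passage from term-wise flat resolutions produced by the preceding lemma to genuinely K-flat resolutions required for an unbounded derived tensor product; uniform boundedness of the filtration is precisely what lets us reduce K-flatness of $P(L)$ to K-flatness of its finitely many graded pieces, which is why $\DF$ (with bounded filtrations) is the natural setting.
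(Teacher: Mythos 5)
Your proof is correct and follows essentially the same route as the paper: both invoke the preceding filtered resolution lemma to produce a filtered quasi-isomorphism from a complex whose filtration steps and graded pieces are K-flat, define $\otimes^{\mathbf{L}}_{\mathcal{O}_S}$ by tensoring with such a resolution, and deduce commutation with $\operatorname{gr}$ from K-flatness of the graded pieces. The only cosmetic differences are that you resolve both factors where the paper resolves just one, and that you spell out the convolution filtration and the well-definedness check explicitly; note only that your displayed identity $\operatorname{gr}^p(L\otimes M)\cong\bigoplus_{i+j=p}\operatorname{gr}^iL\otimes\operatorname{gr}^jM$ is not automatic for arbitrary filtered complexes but requires flatness of the filtration steps of at least one factor, which does hold in every instance where you use it.
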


\begin{proof}
    We prove both claims by construction. 
    By 
    \cref{lemma: filtered resolution with surjections},
    every complex $L$ with uniformly bounded filtrations admits a filtered surjective quasi-isomorphism $f\colon P(L) \rightarrow L$ from a K-flat filtered complex $P$ such that  $\operatorname{gr}(P)$ is K-flat as well.
    This shows that $\otimes^{\mb{L}}_{\mathcal O_S}$ exists and can be computed by
    \[
        L \otimes^{\mb{L}}_{\mathcal O_S} L' = P(L) \otimes_{\mathcal O_S} L',
    \]
    and since $\operatorname{gr}(P(L))$ is K-flat, we also have
    \[
        \operatorname{gr}(L) \otimes^{\mb{L}}_{\mathcal O_S} \operatorname{gr} (L') 
        = \operatorname{gr}(P(L)) \otimes_{\mathcal O_S} \operatorname{gr} (L')
        = \operatorname{gr}(P(L) \otimes_{\mathcal O_S} L')
        = \operatorname{gr}(L \otimes^{\mb{L}}_{\mathcal O_S} L').
        \qedhere
    \]
  
\end{proof}

\begin{corollary}
    Let $\theta \colon X \to S$ be a morphism of ringed spaces. Then there exists a derived functor
    \[
        L\theta^* \colon DF_b (\mathcal O_S) \to DF_b (\mathcal O_X),
    \]
    and it commutes with $\operatorname{gr}$.
\end{corollary}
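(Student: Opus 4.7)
The plan is to mirror the construction of $\otimes^{\mathbb{L}}_{\mathcal O_S}$ from \cref{lemma: deriving filtered tensor}, exploiting the classical identification $L\theta^*(\blank) = \theta^{-1}(\blank) \otimes^{\mathbb{L}}_{\theta^{-1}\mathcal O_S} \mathcal O_X$. Since $\theta^{-1}$ is exact and commutes with both arbitrary direct sums and with $\gr$, it suffices to produce, for every $L \in DF_b(\mathcal O_S)$, a filtered quasi-isomorphism from a complex whose graded and filtered pieces are K-flat over $\mathcal O_S$, and then to pull back term-wise.

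Concretely, I would apply \cref{lemma: filtered resolution with surjections} to $\mathcal A = \Mod(\mathcal O_S)$ with $\mathcal P$ the class of flat $\mathcal O_S$-modules. Flats are closed under arbitrary direct sums, and every $\mathcal O_S$-module admits a functorial surjection from a free module (for instance, the canonical $\bigoplus_{U,\, s \in \mathcal F(U)} j_!\mathcal O_U \to \mathcal F$). The lemma then supplies, functorially in $L$, a term-wise surjective filtered quasi-isomorphism $P(L) \to L$ with each $F^i P(L)^m$ and $\gr^i P(L)^m$ flat, while $P(L)$ stays uniformly bounded in filtration. Because flats are adapted to computing $L(\blank \otimes_{\mathcal O_S} \mathcal O_X)$ and the tensor product commutes with arbitrary direct sums, the last bullet of \cref{lemma: filtered resolution with surjections} guarantees that $P(L)$, each $F^i P(L)$, and each $\gr^i P(L)$ are K-flat complexes.

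I would then set
\[
L\theta^*(L) := \theta^{-1}(P(L)) \otimes_{\theta^{-1}\mathcal O_S} \mathcal O_X,
\]
endowed with the induced filtration. A standard mapping cone argument shows this sends filtered quasi-isomorphisms between such K-flat filtered resolutions to filtered quasi-isomorphisms, so the assignment descends to a well-defined functor $DF_b(\mathcal O_S) \to DF_b(\mathcal O_X)$. Commutation with $\gr$ is then immediate: using that $\theta^{-1}$ is exact, commutes with $\gr$, and that each $\gr^i P(L)$ is K-flat, one computes
\[
\gr^i L\theta^*(L) = \theta^{-1}(\gr^i P(L)) \otimes_{\theta^{-1}\mathcal O_S} \mathcal O_X = L\theta^*(\gr^i L).
\]
The only obstacle I foresee is bookkeeping rather than conceptual: verifying that the Rees-type construction of \cref{lemma: filtered resolution with surjections} genuinely preserves uniform boundedness of the filtration, and that flatness of each graded summand really does upgrade to K-flatness of the corresponding filtered piece. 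Both points are, however, already built into the hypotheses and conclusions of that lemma, so the proof should close cleanly.
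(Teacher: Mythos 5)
Your proposal is correct and follows essentially the same route as the paper: both reduce to the filtered flat-resolution machinery (Lemma \ref{lemma: filtered resolution with surjections} feeding into Lemma \ref{lemma: deriving filtered tensor}), using that $\theta^{-1}$ is exact and sends $\mathcal O_S$-flat objects to $\theta^{-1}\mathcal O_S$-flat objects, so that $L\theta^*=\theta^{-1}(\blank)\otimes^{\mathbb L}_{\theta^{-1}\mathcal O_S}\mathcal O_X$ is computed on a filtered K-flat resolution and commutes with $\operatorname{gr}$. The paper simply states this more tersely by citing the tensor-product lemma directly, whereas you unpack the same construction.
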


\begin{proof}
    The functor $\theta^{-1} \colon \operatorname{Mod}(\mathcal O_S) \to \operatorname{Mod}(\mathcal O_X)$ is exact and takes $\mathcal O_S$-flat objects to $\theta^{-1}\mathcal O_S$-flat objects. 
    Now we apply \cref{lemma: deriving filtered tensor} to $\theta^{-1}( \blank ) \otimes_{\theta^{-1} \mathcal O_S} \mathcal O_X$, which is the definition of $ \theta^*(\blank)$.
\end{proof}

\begin{lemma}
\label{lemma: deriving filtered pushforward}
    Let $f \colon Z \to S$ be a morphism of ringed spaces. Then there exists a derived functor
    \[
        Rf_* \colon DF_b (\mathcal O_Z) \to DF_b (\mathcal O_S),
    \]
    and it commutes with $\operatorname{gr}$.
\end{lemma}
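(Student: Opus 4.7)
The plan is to mirror the proof of \cref{lemma: deriving filtered tensor}, replacing K-flat resolutions (built from surjections from objects of a class $\mathcal{P}$) with $f_*$-acyclic resolutions (built from injections into objects of a class $\mathcal{I}$). First I would prove a dual of \cref{lemma: filtered resolution with surjections}: for a complete abelian category $\mathcal{A}$ with exact products, and a class $\mathcal{I}$ closed under arbitrary products such that every object admits a functorial injection into an object of $\mathcal{I}$, any filtered complex $L \in \CF(\mathcal{A})$ with uniformly bounded filtration admits a filtered quasi-isomorphism $L \to I(L)$ which is term-wise injective, such that both $F^i(I(L)^m)$ and $\gr^i(I(L)^m)$ lie in $\mathcal{I}$; moreover, if the objects of $\mathcal{I}$ are adapted to a right derived functor commuting with products, then so are $I(L)$, each $F^i(I(L))$, and each $\gr^i(I(L))$.

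The construction proceeds by dualising the Rees-type assembly used in \cref{lemma: filtered resolution with surjections}. One picks functorial term-wise K-injective (or flasque/Godement) resolutions $I'(F^i L)$ of each filtered piece, with compatibility morphisms $I'(F^{i+1} L) \to I'(F^i L)$ coming from functoriality. Then one assembles $I(L)$ via a dual-Rees construction using fiber sequences (the dual of the cones used before), ensuring that the $i$-th filtration piece of $I(L)$ is quasi-isomorphic to $I'(F^i L)$ and the graded pieces compute resolutions of the $\gr^i L$. Uniform boundedness of the filtration is essential, as it makes the dual-Rees assembly finite and eliminates any convergence issues.

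Applied to $\mathcal{A} = \Mod(\mathcal{O}_Z)$ with $\mathcal{I}$ the class of K-injective (or flasque) complexes, we obtain a functorial filtered resolution $L \to I(L)$ whose filtered pieces and graded pieces are term-wise $f_*$-acyclic. We then set $Rf_*(L) := f_*(I(L))$; this lies in $DF_b(\mathcal{O}_S)$ because $f_*$ is additive and hence preserves the uniform bound on the filtration. Since $f_*$ is applied term-wise and $\gr(I(L))$ consists of $f_*$-acyclic objects, we get
\[
\gr(Rf_*(L)) = \gr(f_*(I(L))) = f_*(\gr(I(L))) = Rf_*(\gr(L)),
\]
which is exactly the asserted commutation with $\gr$.

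The main obstacle is establishing the dual filtered resolution lemma, and in particular verifying that after the dual-Rees assembly, the associated graded pieces of $I(L)$ are still in $\mathcal{I}$ and compute the correct derived functor values on the $\gr^i L$. One must reverse both the direction of the indexing (so that smaller filtration index corresponds to a larger object under injection) and the cone construction (replacing cones with fibers), then check that the induced filtration on the assembled complex recovers, up to filtered quasi-isomorphism, the original filtration of $L$. All other pieces of the proof then follow formally by transporting the argument of \cref{lemma: deriving filtered tensor} along this dualisation.
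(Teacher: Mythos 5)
Your proposal is correct and follows essentially the same route as the paper: the paper's proof likewise asserts that the dual of \cref{lemma: filtered resolution with surjections} holds by a dual argument (or via K-injective resolutions, with care taken to preserve uniform boundedness of the filtration), and then transports the argument of \cref{lemma: deriving filtered tensor}. You have simply spelled out the dual-Rees assembly that the paper leaves implicit.
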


\begin{proof}
    The dual to \cref{lemma: filtered resolution with surjections} holds by a dual argument, or alternatively, by a combination of \cite{stacks-project}*{\href{https://stacks.math.columbia.edu/tag/05TW}{05TW},
    \href{https://stacks.math.columbia.edu/tag/079P}{079P},
    \href{https://stacks.math.columbia.edu/tag/070L}{070L}} in the case when the chosen subclass of objects consists of injectives; in the latter argument, some care is needed to control that the resulting filtered K-injective complex has uniformly bounded filtrations.
    Therefore, the argument dual to one in the proof of \cref{lemma: deriving filtered tensor} applies.
\end{proof}

\subsection{The cup product morphism}

\begin{proposition}
    Consider $S$-schemes $X$ and $Y$ as in \cref{commdiag: setup}. We assume that $X$ and $Y$ are quasicompact with affine diagonal, and $L$ (resp. $M$) is a filtered $S$-differential complex on $X$ (resp. $Y$).
    Then there exists a cup product morphism
    \[
        R\theta_* (L) \otimes^{\mb{L}}_{\mathcal O_S} R\eta_* (M) 
        \to
        Rf_* \left( \Tot (L \boxtimes M) \right).
    \]
    If in addition $L$ and $M$ consist of flat objects, have uniformly bounded filtration, $L$ is bounded and $M$ is bounded below, then 
    this morphism induces a quasi-isomorphism on the associated graded objects:
    \[
        \operatorname{gr} \left(  R\theta_* (L) \otimes^{\mb{L}}_{\mathcal O_S} R\eta_* (M) \right)
        \to
        \operatorname{gr} \left( Rf_* \left( \Tot (L \boxtimes M) \right) \right).
    \]
\end{proposition}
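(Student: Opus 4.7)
The construction of the cup product morphism proceeds via adjunction. First, I use the unit maps $L \to Rp_* p^* L$ and $M \to Rq_* q^* M$ in the filtered derived categories of $S$-differential complexes on $X$ and $Y$; since $p$ and $q$ are projections from a fibre product, pulling back an $S$-linear differential operator is well-defined and the induced filtered pullbacks exist by the corollary to Lemma \ref{lemma: deriving filtered tensor}. Applying $R\theta_*$ and $R\eta_*$ respectively (using Lemma \ref{lemma: deriving filtered pushforward}) yields
\[
    R\theta_*(L) \to Rf_* (p^* L)
    \quad \text{and} \quad
    R\eta_*(M) \to Rf_* (q^* M).
\]
Taking the derived tensor product on $S$ and composing with the lax monoidal (external cup product) map $Rf_*(-) \otimes^{\bf L}_{\O_S} Rf_*(-) \to Rf_*((-) \otimes^{\bf L}_{\O_{X \times_S Y}} (-))$ produces the desired morphism, once we identify $p^* L \otimes^{\bf L}_{\O_{X \times_S Y}} q^* M$ with $\Tot(L \boxtimes M)$. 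The flatness hypothesis on $L$ and $M$ ensures the underived tensor product on the right computes the derived one, which handles this identification.

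For the second statement (quasi-isomorphism on associated graded objects), the strategy is to transport everything through $\gr$ and reduce to the classical (unfiltered) Künneth formula. By the compatibility established earlier in the section, $\gr$ commutes with $L\theta^*$, $\otimes^{\bf L}_{\O_S}$, and $Rf_*$ in the filtered setting. Thus applying $\gr$ to the cup product morphism yields the analogous cup product morphism for the associated graded complexes, which decomposes as a direct sum (indexed by filtration degree) of classical cup product maps
\[
    R\theta_*(\gr^i L) \otimes^{\bf L}_{\O_S} R\eta_*(\gr^j M) \to Rf_*(\gr^i L \boxtimes \gr^j M),
\]
after using the identification $\gr^k(L \boxtimes M) \cong \bigoplus_{i+j=k} \gr^i L \boxtimes \gr^j M$ arising from flatness.

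The final step is to invoke the classical Künneth formula for each of these summands. Under the hypotheses that $X$ and $Y$ are quasi-compact with affine diagonal, that the objects involved are $\O_X$- and $\O_Y$-flat, and with $L$ bounded and $M$ bounded below, the cup product map is a quasi-isomorphism; this is the content of \cite{stacks-project}*{\href{https://stacks.math.columbia.edu/tag/0G45}{0G45}} (or can be derived from it by standard arguments for unbounded-below $M$ using the boundedness of filtration and the vanishing of $F^N$). Summing over the grading pieces gives the required filtered quasi-isomorphism.

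\textbf{Main obstacle.} The chief technical subtlety is verifying that the lax monoidal map for $Rf_*$ is well-defined and functorial at the level of the filtered derived category of $S$-differential complexes, rather than just at the level of $\DF(\O_{X\times_S Y})$. Since the differentials in $L \boxtimes M$ are genuine $S$-linear differential operators (being built from $d_L \boxtimes 1$ and $1 \boxtimes d_M$, which are $S$-differential by the preceding lemma), the target of the cup product map lives in $\DFD(S,S)$; one must check that the construction above really produces a morphism in $\DFD$ and not only in $\DF$, which amounts to tracking that every intermediate map commutes with the forgetful functor $\sigma$ and respects the filtrations uniformly. Once this bookkeeping is done, the reduction to the classical Künneth statement on graded pieces is formal.
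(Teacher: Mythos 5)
Your proposal is correct and follows essentially the same route as the paper: the cup product is obtained by transporting the standard relative cup product construction (Stacks 0G4B) to the filtered setting using the lemmas showing that $L\theta^*$, $\otimes^{\mathbf{L}}_{\O_S}$ and $Rf_*$ exist on filtered derived categories and commute with $\gr$, and the quasi-isomorphism on graded pieces is then the classical K\"unneth formula applied degreewise. One small correction: the K\"unneth statement you want to invoke at the end is \cite{stacks-project}*{\href{https://stacks.math.columbia.edu/tag/0FLT}{0FLT}}, not 0G45 (the latter concerns differential operators and base change and is used elsewhere in the section).
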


\begin{proof}
    We have verified that all the operations in the construction of \cite{stacks-project}*{\href{https://stacks.math.columbia.edu/tag/0G4B}{0G4B}} are well-defined on filtered derived categories.
    Note that apply the construction of \cref{lemma: deriving filtered pushforward} to morphisms $(X, \theta^{-1} \cO_S) \to S$, $(Y, \eta^{-1} \cO_S) \to S$, $(X \times_S Y, f^{-1} \cO_S) \to S$.
    
    The second claim follows from \cite{stacks-project}*{\href{https://stacks.math.columbia.edu/tag/0FLT}{0FLT}} in case $S$ is affine, since we checked that taking derived functors commutes with $\operatorname{gr}$.
    If $S$ is not affine, we check the quasi-isomorphism locally over open affine subschemes $S' \subset S$.
\end{proof}

\section{Construction of the copairing map for locally free sheaves} 
\label{sec: copairing locally free}
Let $X$ over $S = \Spec A$ be smooth proper of relative dimension $n$.
Let $\gw_{X/S} = \Omega^n_{X/S}$, let $\mc{E}$ be a locally free sheaf on $X$, and let $\mc{E}^* := \mathscr{H}om_{\mc{O}_X}(\mc{E}, \gw_{X/S})$.  We have the Serre duality pairing
\be
\ep \colon R \Gamma(X, \mc{E}^*)[n] \tten_A R \Gamma(X, \mc{E}) \to A,
\ee
which is a perfect pairing of perfect complexes in $D(A)$. The goal of this section is to explicitly construct a map 
\be
\eta \colon A \to R \Gamma(X, \mc{E}^*)[n] \tten_A R\Gamma(X, \mc{E}),
\ee
which we will show to be a copairing map in the following section.  
First, we note that we have the K\"unneth isomorphism
\be
R\Gamma(X, \mc{E}^*)[n] \tten_A R \Gamma(X, \mc{E}) \cong R \Gamma(X \times_S X, \mc{E} \bt \mc{E}^*[n]),
\ee
and therefore to give $\eta$ is equivalent to giving
\be
\tilde{\eta} \colon A \to H^n(X \times_S X, \mc{E} \bt \mc{E}^*),
\ee
i.e. equivalent to giving an element $\tilde{\eta}(1) \in H^n(X \times_S X, \mc{E} \bt \mc{E}^*)$.  We construct this element as follows.
Let $\Delta \colon X \to X \times_S X$ be the diagonal map, and note that we have a natural map
\be
H^n_{\Delta}(X \times_S X, \mc{E} \bt \mc{E}^*) \to H^n(X \times_S X, \mc{E} \bt \mc{E}^*)
\ee
where $H^n_{\Delta} = R^n H^0_\Delta$ is the cohomology with supports along $\Delta$.  Next, observe that for a sheaf $\mc{F}$ on $X \times_S X$, we have $H^0_{\Delta}(\mc{F}) := H^0(\Delta, \mc{H}^0_{\Delta}(\mc{F}))$, so local cohomology is the composition of global sections with taking the local cohomology sheaf.  
The corresponding Grothendieck spectral sequence reads
\be
E_2^{pq} \colon H^p(\Delta, \mc{H}^q_\Delta(\mc{F})) \implies H_{\Delta}^{p + q}(X \times_S X, \mc{F}).
\ee 

To describe these cohomology sheaves $\mc{H}^q_\Delta(\mc{F})$, we will use the following lemma. 

\begin{lemma}
\label{lem:coh}
Let $i \colon Z \to P$ be a Koszul-regular immersion of codimension $c$, and let $\mc{G}$ be a locally free sheaf on $P$.  Then $\mc{H}^a_Z(\mc{G})$ is 0 unless $a = c$.  Furthermore, the local sections of $\mc{H}^c_Z(\mc{G})$ can be described as the global sections of the cokernel of the final map in the extended alternating \v{C}ech complex associated to $i$ and $\mc{G}$, suitably localised (which will be explicitly described in the proof).
\end{lemma}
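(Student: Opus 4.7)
The plan is to reduce to the affine case via the local nature of the claim, then identify the cohomology sheaves $\mc H^a_Z(\mc G)$ with the cohomology of an extended \v{C}ech-type complex, and finally exploit Koszul-regularity plus local freeness to concentrate this cohomology in degree $c$.

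Since $\mc H^a_Z$ is computed locally on $P$, I would first reduce to the case $P = \Spec R$, with $Z = V(f_1, \dots, f_c)$ cut out by a Koszul-regular sequence $f_1, \dots, f_c \in R$, and $\mc G$ corresponding to a finite locally free (in particular flat) $R$-module $G$. The extended alternating \v{C}ech complex in this setting is
\[
K^\bullet(f_1, \dots, f_c; G) : \; 0 \to G \to \bigoplus_{i} G_{f_i} \to \bigoplus_{i<j} G_{f_i f_j} \to \cdots \to G_{f_1 f_2 \cdots f_c} \to 0,
\]
placed in degrees $0$ through $c$. The first step is to invoke the standard identification $H^a(K^\bullet) \cong H^a_Z(G)$, which sheafifies to a description of $\mc H^a_Z(\mc G)$; this follows from the presentation of $R\Gamma_Z$ as the fibre of $R\Gamma(P, \blank) \to R\Gamma(P \setminus Z, \blank)$ together with the \v{C}ech computation of cohomology on the open cover $\{D(f_i)\}$ of $P \setminus Z$.

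The heart of the argument is the vanishing $H^a(K^\bullet) = 0$ for $a \neq c$. I would present $K^\bullet(f_1, \dots, f_c; G)$ as the total complex of $K^\bullet(f_1; R) \otimes_R \cdots \otimes_R K^\bullet(f_c; R) \otimes_R G$, where each two-term factor $K^\bullet(f_i; R) = [R \to R_{f_i}]$ is a complex of flat $R$-modules. By flatness of $G$, this is quasi-isomorphic to $K^\bullet(f_1, \dots, f_c; R) \otimes_R^{\mathbb L} G$, reducing us to the case $G = R$. I would then induct on $c$: the case $c = 1$ is that $f_1$ is a nonzerodivisor on $R$, which is a consequence of Koszul-regularity; for the inductive step I would use that tensoring with the two-term complex $K^\bullet(f_c; R)$ yields a short exact sequence of complexes whose associated long exact sequence, combined with the inductive vanishing and the preservation of Koszul-regularity after localisation at $f_c$, delivers vanishing in all degrees $< c$.

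Granting this vanishing, the second claim is immediate: $\mc H^c_Z(\mc G)$ is by construction the cokernel of the final differential of $K^\bullet$, namely $\bigoplus_k G_{f_1 \cdots \widehat{f_k} \cdots f_c} \to G_{f_1 f_2 \cdots f_c}$, which is the sheafification over the chosen affine open of the stated description. The main technical obstacle I anticipate is the careful bookkeeping in the inductive vanishing step under Koszul-regularity (as opposed to full regularity), which is the mildest hypothesis sufficient here; this should be mitigated by appealing to standard stacks project lemmas about Koszul complexes and their behaviour under tensoring with flat modules.
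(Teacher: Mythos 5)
Your overall strategy is the same as the paper's: reduce to the affine picture $P=\Spec R$, $Z=V(f_1,\dots,f_c)$, represent $R\mc{H}_Z(\mc{G})$ by the extended alternating \v{C}ech complex, read off the vanishing in degrees $>c$ from the fact that this complex lives in degrees $[0,c]$, and identify $\mc{H}^c_Z(\mc{G})$ with the cokernel of its last differential. Your derivation of the identification from the fibre sequence $R\Gamma_Z(P,\mc{G})\to R\Gamma(P,\mc{G})\to R\Gamma(P\setminus Z,\mc{G})$ together with the \v{C}ech computation on the cover $\{D(f_i)\}$ is a correct unwinding of the Stacks Project tag the paper simply cites for this step, and the reduction from $\mc{G}$ to $\O_P$ via flatness is fine.

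The gap is in your inductive proof of the vanishing in degrees $a<c$, and it is more than bookkeeping. First, your induction requires the truncated sequence $f_1,\dots,f_{c-1}$ to be Koszul-regular (in $R$, and in $R_{f_c}$), but an initial segment of a Koszul-regular sequence is not in general Koszul-regular --- this is one of the standard pathologies separating Koszul-regular from honest regular sequences, so the inductive hypothesis is simply not available under the stated hypotheses. Second, even granting it, the long exact sequence in the critical degree $a=c-1$ only yields $H^{c-1}(C^\bullet(f_1,\dots,f_c;R))\cong\ker\bigl(H^{c-1}(C^\bullet(f_1,\dots,f_{c-1};R))\to H^{c-1}(C^\bullet(f_1,\dots,f_{c-1};R))_{f_c}\bigr)$, i.e.\ the $f_c$-power torsion of the top cohomology of the truncated complex; showing this vanishes is exactly the content your sketch omits (for a genuinely regular sequence it follows because $f_c$ is a nonzerodivisor on each $R/(f_1^e,\dots,f_{c-1}^e)$, but that reasoning is unavailable here). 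The standard repair, and in effect what the tag cited by the paper does, is to avoid the induction entirely: write the extended alternating \v{C}ech complex as the filtered colimit over $e$ of the dual Koszul complexes $K^\bullet(f_1^e,\dots,f_c^e)$, use that powers of a Koszul-regular sequence are again Koszul-regular, and use self-duality of the Koszul complex to get $H^a(K^\bullet(f_1^e,\dots,f_c^e))\cong H_{c-a}(K_\bullet(f_1^e,\dots,f_c^e))=0$ for $a<c$; then pass to the colimit. With that substitution your argument closes and agrees with the paper's.
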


\begin{proof} 
The first half of this lemma is proven in \cite{stacks-project}*{\href{https://stacks.math.columbia.edu/tag/0G7P}{0G7P}}.
We explain the proof here, which also explains the second half of this lemma, 
namely how one can explicitly describe local sections of $\mc{H}_Z^c(\mc{G})$.  

We check these statements locally.
Suppose that $P = \Spec R$, $Z = V(f_1, \dots, f_c)$ with $f_1, \dots, f_c$ a Koszul-regular sequence of $R$.  Let $\mc{G} = \tilde{M}$, where $M$ is a locally free $R$-module.
Let $U_i = P - V(f_i)$, and $U_{i_0, \dots, i_k} := U_{i_0} \cap \dots \cap U_{i_k}$, which are all affine.
Let $j_{i_0, \dots, i_k} \colon U_{i_0, \dots, i_k} \to P$ be the inclusion and $\mc{G}_{i_0, \dots, i_k}$ be the restriction of $\mc{G}$ to $U_{i_0, \dots, i_k}$.  

Consider the sequence
\begin{equation}
    \label{eq: mod} 
    0 \to M \to \bigoplus_i M_{f_i} \to \bigoplus_{i < j} M_{f_i f_j} \to \dots \to M_{f_1 \dots f_c}
\end{equation}
and its associated sheafification 
\begin{equation}
    \label{eq: she}
0 \to \mc{G} \to \bigoplus_{i} (j_i)_* \mc{G}_i \to \dots \to (j_{1, \dots, c})_* \mc{G}_{1, \dots, c}, 
\end{equation}
where $\mc{G}$ is placed in degree 0.  By \cite{stacks-project}*{\href{https://stacks.math.columbia.edu/tag/0G7M}{0G7M}}, the complex (\ref{eq: she}) represents $i_*R\mc{H}_Z(\mc{G})$ in $D_Z(\O_P)$.  
Therefore the local cohomology sheaves vanish in degrees higher than $c$.  
Furthermore, by Koszul-regularity the cohomology is zero in degrees less than $c$ as well, proving the first part of the lemma. 

For the second part of the lemma, let $Q$ be the cokernel of the final map in the complex (\ref{eq: mod}), so that $\tilde{Q}$ is the cokernel of the final map in the complex (\ref{eq: she}).  Then again because the complex (\ref{eq: she}) represents $i_*R\mc{H}_Z(\mc{G})$, we see that the local sections of $\mc{H}^c_Z(\mc{G})$ are given by $\mc{H}^0_Z(\tilde{Q})$, as desired.
\end{proof}

\begin{corollary}
\label{corollary: cohg with support as cohg}
Let $i \colon Z \to P$ be a Koszul-regular immersion of codimension $c$, and let $\mc{G}$ be a locally free sheaf on $P$.  Then
\be
H^a_Z(P, \mc{G}) = H^{a-c}(Z, \mc{H}_Z^c(P, \mc{G})).
\ee
\end{corollary}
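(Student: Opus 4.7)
The plan is to apply the Grothendieck spectral sequence for local cohomology introduced just before \cref{lem:coh}, specialised to the case at hand. Concretely, for the morphism $i \colon Z \to P$ and the sheaf $\mc{G}$ on $P$, we have
\[
E_2^{pq} = H^p(Z, \mc{H}^q_Z(\mc{G})) \implies H^{p+q}_Z(P, \mc{G}).
\]

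The key input is the vanishing established in \cref{lem:coh}: since $i$ is a Koszul-regular immersion of codimension $c$ and $\mc{G}$ is locally free, the local cohomology sheaves $\mc{H}^q_Z(\mc{G})$ vanish for all $q \neq c$. Consequently the $E_2$-page of the spectral sequence is concentrated on the single row $q = c$, so every differential $d_r$ for $r \geq 2$ is automatically zero (its source and target lie in rows whose difference in $q$ is nonzero while one of them must be off the row $q=c$). The spectral sequence therefore degenerates at $E_2$, and there are no extension problems because only one row contributes to each abutment degree.

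Reading off the abutment for $p + q = a$ (necessarily with $q = c$ and $p = a - c$) gives the desired identification
\[
H^a_Z(P, \mc{G}) \;\cong\; E_2^{a-c,\,c} \;=\; H^{a-c}\bigl(Z, \mc{H}^c_Z(\mc{G})\bigr).
\]
There is essentially no obstacle here: the only nontrivial ingredient is the codimension-$c$ concentration of $\mc{H}^\bullet_Z(\mc{G})$, which is exactly what \cref{lem:coh} provides, and the statement about the spectral sequence is standard (being the Grothendieck composition-of-functors spectral sequence for $H^0_Z(\blank) = H^0(Z, \mc{H}^0_Z(\blank))$).
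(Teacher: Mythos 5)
Your proof is correct and is essentially the paper's own argument in a different guise: the paper phrases it as the derived-functor identity $R\Gamma_Z(P,\mc{G}) = R\Gamma(Z, R\mc{H}^0_Z(P,\mc{G})) = R\Gamma(Z, \mc{H}^c_Z(P,\mc{G})[-c])$, of which your collapsed Grothendieck spectral sequence is precisely the cohomological shadow. Both rest on the same key input, the concentration of $\mc{H}^\bullet_Z(\mc{G})$ in degree $c$ from \cref{lem:coh}, so there is nothing to add.
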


\begin{proof}
By \cref{lem:coh}, we have $R\Gamma_Z (P, \cG) = R\Gamma (Z, R \cH^0_Z (P, \cG)) = R\Gamma (Z, \cH^c_Z (P, \cG)[-c])$. Then taking $a$-th cohomology group implies the result.
\end{proof}
Since $X \to S$ is smooth, the diagonal map of $X$ is a Koszul-regular immersion \cite{stacks-project}*{\href{https://stacks.math.columbia.edu/tag/069G}{069G}}.
Thus by \cref{corollary: cohg with support as cohg}, to produce $\tilde{\eta}(1) \in H^n(X \times_S X, \mc{E} \bt \mc{E}^*)$ it suffices to produce a global section of the sheaf $\mc{H}^n_{\Delta}(X \times_S X, \mc{E} \bt \mc{E}^*)$, a task we can do locally in a neighborhood of $\Delta$ in $X \times_S X$ using the proof of Lemma \ref{lem:coh}. Indeed, the lemma implies that to give a local section of $\mc{H}^n_{\Delta}(X \times_S X, \mc{E} \bt \mc{E}^*)$, it suffices to give a section of $\tilde{Q}$, which we recall to be the cokernel of the final map in the complex (\ref{eq: she}).
Work in a local chart $U \subset X$ with coordinates $x_1, \dots, x_n$ giving an \'etale map to $\mb{A}^n_S$.  Let $x_1, \dots, x_n, y_1, \dots, y_n$ denote the corresponding coordinates on $U \times_S U \subset X \times_S X$.  

\begin{lemma}
\label{lem: diag}
The locus $V(x_1-y_1, \dots, x_n - y_n) \subset U \times_S U$ is the disjoint union of $\Delta_U$ and a closed subset of $U \times_S U$. In particular, there exists an open subset of $U \times_S U$ containing $\Delta_U$ on which $V(x_1 - y_1, \dots, x_n - y_n)$ cuts out precisely $\Delta_{U}$.
\end{lemma}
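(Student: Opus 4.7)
The plan is to identify the subscheme $Z := V(x_1-y_1, \dots, x_n-y_n) \subset U \times_S U$ as a fibre product along the étale chart map, and then invoke the standard fact that the diagonal of an étale morphism is clopen.

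First, I would observe that the ideal $(x_1-y_1, \dots, x_n-y_n)$ is precisely the pullback to $U \times_S U$ of the ideal of the diagonal $\Delta_{\mathbb{A}^n_S} \subset \mathbb{A}^n_S \times_S \mathbb{A}^n_S$ along the map $\pi \times_S \pi \colon U \times_S U \to \mathbb{A}^n_S \times_S \mathbb{A}^n_S$. This gives a canonical identification $Z \cong U \times_{\mathbb{A}^n_S} U$, under which the inclusion $\Delta_U \hookrightarrow Z$ is identified with the relative diagonal $\Delta_\pi \colon U \to U \times_{\mathbb{A}^n_S} U$ of the étale chart morphism $\pi \colon U \to \mathbb{A}^n_S$.

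Next, I would invoke two elementary properties of $\pi$. Since $\pi$ is étale, it is unramified, so by \cite{stacks-project}*{\href{https://stacks.math.columbia.edu/tag/02GE}{02GE}} the diagonal $\Delta_\pi$ is an open immersion. On the other hand, $\pi$ is a morphism of affine schemes (or equivalently, $\mathbb{A}^n_S \to S$ is separated and $U \to S$ is affine), hence $\pi$ is separated, so $\Delta_\pi$ is also a closed immersion. Combining the two, the image $\Delta_U \subset Z$ is both open and closed in $Z$, so we may write $Z = \Delta_U \sqcup Z'$ with $Z'$ closed in $Z$.

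Finally, since $Z$ is itself closed in $U \times_S U$ (cut out by the equations $x_i = y_i$) and $Z' \subset Z$ is closed, $Z'$ is closed in $U \times_S U$. Setting $W := (U \times_S U) \setminus Z'$ gives an open subscheme containing $\Delta_U$ on which $Z \cap W = \Delta_U$ scheme-theoretically, as required. The only non-formal input is the clopen-diagonal property of étale morphisms; the rest is bookkeeping with fibre products.
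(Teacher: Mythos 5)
Your proof is correct and follows essentially the same route as the paper's: both identify $V(x_1-y_1,\dots,x_n-y_n)$ with $U\times_{\mathbb{A}^n_S}U$ and invoke \cite{stacks-project}*{\href{https://stacks.math.columbia.edu/tag/02GE}{02GE}} to see that the relative diagonal is an open immersion, so its complement is closed. The only cosmetic difference is the last step: the paper produces the open neighbourhood as $D(g)$ for a function $g$ that is $1$ on $\Delta_U$ and $0$ on the closed complement, whereas you simply remove that closed complement, which is slightly cleaner.
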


\begin{proof}
First, note that $V(x_1 - y_1, \dots, x_n - y_n)$ is just the image of $U \times_{\mb{A}^n_S} U$ under the natural closed embedding $U \times_{\mb{A}^n_S} U \to U \times_S U$, and moreover the diagonal morphism $U \to U \times_S U$ factors through the diagonal $U \to U \times_{\mb{A}^n_S} U$.
Since $U \to \mb{A}^n_S$ is \'etale, the diagonal map $U \to U \times_{\mb{A}^n_S} U$ is an open immersion \cite{stacks-project}*{\href{https://stacks.math.columbia.edu/tag/02GE}{02GE}}.
It follows that the complement of $\Delta_U$ in $U \times_{\mb{A}^n} U$ is a closed subset $V \subset V(x_1 - y_1, \dots, x_n - y_n)$, which was to be proven. 

Letting $g \in \Gamma(U \times_S U, \mc{O})$ be a function which is $1$ along $\Delta_U$ and $0$ along $V$,
we have that $V(x_1 - y_1, \dots, x_n - y_n) \cap D(g)$ is $\Delta_U$, as desired.
\end{proof}

Using this lemma, we may cover the diagonal of $X$ by open subsets $U'\subset \Delta_X$
which are cut out by $x_1 - y_1, \dots, x_n - y_n$, 
where $x_1, \dots, x_n, y_1, \dots, y_n$ are local coordinates yielding an \'etale map to $\mb{A}^n \times_S \mb{A}^n$.
Letting $e_1, \dots, e_r$ be a local basis  of $\mc{E}$, with $e_1^\vee, \dots, e_r^\vee$ denoting the dual basis in  $\mc{E}^\vee$, and letting $ \gw:= dy_1 \wedge \dots \wedge dy_n$ be a trivialisation of $\gw_{X/S}$, we can choose the section
\be
\tau_{\E}|_{U'} := \frac{\sum_{i = 1}^r e_i \bt (e_i^\vee \otimes \gw)}{(x_1 - y_1) \cdots (x_n - y_n)}.
\ee
It remains to show that the $\tau_{\E}|_{U'}$ glue to a global section on $X \times_S X$.
Let $\mathcal{I}$ be the quasicoherent sheaf of ideals of the diagonal $\Delta\colon X\rightarrow X\times_S X$.
We recall that there is a natural map \cite{stacks-project}*{\href{https://stacks.math.columbia.edu/tag/0G7T}{0G7T}}

\be
c \colon \wedge^n (\mathcal{I}/\mathcal{I}^2)^\vee \otimes_{\mc{O}_X} \Delta^*(\mc{E} \bt \mc{E}^*) \to \mc{H}^n_\Delta(\mc{E} \bt \mc{E}^*).
\ee
Note that changing a basis for $\mathcal{I}/\mathcal{I}^2$ alters the local map $\Delta^*(\mc{E} \bt \mc{E}) \to \mc{H}_{\Delta}^n(\mc{E} \bt \mc{E})$ by a factor of a determinant, so the maps will glue if we include the factor of $\wedge^n (\mathcal{I}/\mathcal{I}^2)^\vee$. We have canonical isomorphisms
\be
\wedge^n (\mathcal{I}/\mathcal{I}^2)^\vee \otimes_{\mc{O}_X} \Delta^*(\mc{E} \bt \mc{E}^*) \cong \gw_{X/S}^\vee \otimes\mc{E} \otimes \mc{E}^\vee \otimes \gw_{X/S} \cong \mc{E}nd(\mc{E}),
\ee
and thus can consider the global section
\be
c(\id_{\mc{E}}) \in H^0(X \times_S X, \mc{H}^n_{\Delta}(\mc{E} \bt \mc{E}^*)).
\ee
Over the open subset $U'$ constructed above, we can trace through the above identifications and find that
\be
c(\id_\mc{E})|_{U'} = c((dy_1 \wedge \dots \wedge dy_n)^\vee \otimes (\sum_i e_i \otimes e_i^\vee) \otimes dy_1 \wedge \dots \wedge  dy_n) = \frac{\sum_{i = 1}^r e_i \boxtimes (e_i^\vee \otimes \gw)}{(x_1 - y_1)\dots(x_n - y_n)} = \tau_{U'},
\ee
so indeed $c(\id_\mc{E})$ yields the desired global section, and hence the desired element $\tilde{\eta}(1) \in H^n(X \times_S X, \mc{E} \bt \mc{E}^*)$.

\section{Compatibility with Serre duality} 
\label{sec: compatibility}
\subsection{Setup}
Using the notation from the previous section, recall that Serre duality gives a perfect pairing 
\be
\ep: R \Gamma(X, \mc{E}^*)[n] \tten_A R \Gamma(X, \mc{E}) \to A,
\ee
of perfect complexes in $D(A)$.  Now that we have constructed a map 
\[\eta\colon A\to \rg(X^2, \ebe[n])\simeq \rg(X,\mc{E})\otimes_A^{\mb{L}}\rg(X,\mc{E}^*)[n]\] 
by constructing a distinguished element $\eta(1)=\tau_{\E}$, 
we will show that it gives a coevaluation to the evaluation $\ep$.  That is, we will check that the following composition is the identity:
\begin{equation}
\label{eqn: comp}
    \rg(X,\mc{E})\xrightarrow{\eta\otimes id} \rg(X,\mc{E})\otimes^{\mb{L}}_A\rg(X,\mc{E}^*)[n]\otimes^{\mb{L}}_A \rg(X,\mc{E})\xrightarrow{id\otimes \epsilon}  R\Gamma(X,\mc{E}).
\end{equation}

\subsection{Factoring the composition} 
Computing the composition directly can be tricky; therefore we will factor it in a way that allows us to use the local description of $\eta$.

First we note that there's a map $R\Gamma(X,\mc{E})\xrightarrow{p_{1}^*}\rg(X^3,\E\boxtimes \mc{O}_X\boxtimes \mc{O}_X)$ which is pullback by the projection on the first factor. 
Now recall that $\tau_{\E}\in H^n_{\Delta}(X^2,\ebe)$ which we pullback along the $p_{23}\colon X^3\to X^2$ to get a class $p^{*}_{23}\tau_{\E}\in H^nR\Gamma_{\Delta\times X}(X^3, \O_X\boxtimes \E^*\boxtimes \E)$.  By taking the cup product with this class, we have a sequence of maps 

$$R\Gamma(X,\mc{E})\xrightarrow{p_3^*}R\Gamma(X^3,\mc{O}_X\boxtimes \mc{O}_X\boxtimes \mc{E})\xrightarrow{(-)\cup p_{12}^*\tau} R\Gamma_{\Delta\times X}(X^3,\mc{E}\boxtimes \mc{E}^*[n]\boxtimes \mc{E})\to R\Gamma(X^3,\mc{E}\boxtimes \mc{E}^*[n]\boxtimes \mc{E}).$$

Pulling back along $\id\times \Delta\colon X\times X\rightarrow X^3$, we get a map $R\Gamma(X^3,\mc{E}\boxtimes \mc{E}^*[n]\boxtimes \mc{E})\xrightarrow{(id\times \Delta^*)} R\Gamma(X^2,\mc{E}\boxtimes (\mc{E}^*\otimes \mc{E})[n])$ and via the trace map we get a map 

\begin{equation}
   \label{eqn: trace}
   R\Gamma(X^2,\mc{E}\boxtimes (\mc{E}^*\otimes \mc{E})[n])\xrightarrow{\mathrm{Tr}} R\Gamma(X^2,\mc{E}\boxtimes (\omega_{X/S})[n])\to R\Gamma(X,\mc{E})\otimes^{\mb{L}}_A R\Gamma(X,\omega_{X/S}[n])\xrightarrow{1\otimes \int_X} R\Gamma(X, \mc{E}).
\end{equation}

We claim that the composition of these maps just described is equal to the composition in \eqref{eqn: comp}.  Indeed, this is equivalent to the commutativity of the following diagram. \\ 

\adjustbox{scale=0.75,center}{%
\begin{tikzcd}
	{R\Gamma(X^3, \O_X\boxtimes \O_X\boxtimes \mc{E})} & {R\Gamma_{\Delta\times X}(X^3, \mc{E}\boxtimes \mc{E}^*[n]\boxtimes \mc{E})} & {R\Gamma(X^3, \mc{E}\boxtimes \mc{E}^*[n]\boxtimes \mc{E})} & {R\Gamma(X^2, \mc{E}\boxtimes(\mc{E}^*[n]\otimes\mc{E}))} \\
	{R\Gamma(X, \mc{E})} && {R\Gamma(X, \mc{E})\tten_AR\Gamma(X, \mc{E}^*)[n]\tten_A R\Gamma(X, \mc{E})} & {R\Gamma(X, \mc{E})}
	\arrow["(-)\cup {p_{12}^*\tau_{\E}}", from=1-1, to=1-2]
	\arrow[from=1-2, to=1-3]
	\arrow["{(\id \times \Delta)^*}", from=1-3, to=1-4]
	\arrow["{\eta\otimes \id}"', from=2-1, to=2-3]
	\arrow["{p_3^*}", from=2-1, to=1-1]
	\arrow["{\text{K\"unneth}}"', from=2-3, to=1-3]
	\arrow["{\id\times \epsilon}"', from=2-3, to=2-4]
	\arrow["{\Tr, \text{K\"unneth}, \epsilon}", from=1-4, to=2-4]
\end{tikzcd}
}

Here the rightmost map denoted `$\Tr, \text{K\"unneth}, \epsilon$' is the composition described in \eqref{eqn: trace}. This diagram commutes because each of the two squares commutes by construction.  Thus we need to show that the composition along the top side of the rectangle is the identity. \\ 

Note that the composition from the bottom left corner to the top right corner can be simplified to 
\[
R\Gamma(X, \E)\xrightarrow{p_2^*}R\Gamma(X^2, \O_X\boxtimes\E)\xrightarrow{\cup \tau_{\E}} R\Gamma(X^2, \E\boxtimes(\E^*[n]\otimes \E)).
\]
Furthermore, since $\tau_{\mc{E}}\in H^0(X \times_S X, \mc{H}^n_{\Delta}(\mc{E} \bt \mc{E}^*))$ is supported on the diagonal, these maps also factor through the corresponding local cohomology groups.  Thus checking that the composition \eqref{eqn: trace} is the identity is equivalent to the following result.  
\begin{proposition}
\label{prop: id1}
    The composition
    \begin{align*}
    R\Gamma(X, \mc{E}) & \xrightarrow{p_2^*}R\Gamma(X^2, \O_X\boxtimes\E) \xrightarrow{\cup\tau_{\E}} {R\Gamma_{\Delta}(X^2, \mc{E}\boxtimes (\mc{E}\otimes \mc{E}^*))} \xrightarrow{\Tr} R\Gamma_{\Delta}(X^2, \mc{E}\boxtimes \omega_{X/S}[n]) \\ 
    &\rightarrow R\Gamma(X^2, \mc{E}\boxtimes \omega_{X/S}[n])\xrightarrow{\text{K\"unneth}, \epsilon} R\Gamma(X, \mc{E})
\end{align*} 
is the identity. 
\end{proposition}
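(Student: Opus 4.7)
The plan is to reduce the claim to the case $\mathcal{E} = \mathcal{O}_X$ via a local computation, then invoke the residue-theoretic characterisation of the Serre duality trace. The first step is to establish the global identity
\[ \Tr_{23}(\tau_\mathcal{E} \cup p_2^* s) = p_1^* s \cup \tau_{\mathcal{O}_X} \quad \text{in } H^n_\Delta(X^2, \mathcal{E} \boxtimes \omega_{X/S}[n]), \]
where $\tau_{\mathcal{O}_X} = c(\id_{\mathcal{O}_X})$ is the analogue of $\tau_\mathcal{E}$ for the structure sheaf. Since both sides are defined intrinsically, it suffices to verify this equality in an \'etale chart $U \subset X$ with coordinates $x_1, \dots, x_n$ and a local frame $\{e_i\}$ of $\mathcal{E}$; by Lemma \ref{lem: diag} we may further restrict to a neighbourhood $U'$ of the diagonal on which it is cut out by the $f_k = x_k - y_k$.

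Using the explicit formula $\tau_\mathcal{E}|_{U'} = \frac{\sum_i e_i(x) \boxtimes (e_i^\vee(y) \otimes \omega(y))}{\prod_k (x_k - y_k)}$ with $\omega(y) = dy_1 \wedge \cdots \wedge dy_n$, and writing $s = \sum_j s_j e_j$ locally, the cup product followed by the evaluation $\mathcal{E}^\vee \otimes \mathcal{E} \to \mathcal{O}_X$ on the second factor yields $\frac{\sum_i e_i(x) \otimes s_i(y)\,\omega(y)}{\prod_k(x_k-y_k)}$. The key observation is that in the local cohomology $H^n_\Delta$, realised as the cokernel $Q$ of Lemma \ref{lem:coh}, the substitution $s_i(y) \equiv s_i(x)$ is valid: writing $s_i(y) - s_i(x) = \sum_k (y_k - x_k)\, g_{ik}$, each resulting summand $\frac{g_{ik}\,\omega(y)}{\prod_{\ell \neq k}(x_\ell - y_\ell)}$ lies in the image of $M_{\widehat{f_k}}$ in the \v{C}ech complex and so vanishes in $Q$. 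After this substitution the expression reads $\frac{s(x) \otimes \omega(y)}{\prod_k(x_k - y_k)}$, which is precisely $p_1^* s \cup \tau_{\mathcal{O}_X}$.

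With this identity, the composition reduces to $s \mapsto p_1^* s \cup \tau_{\mathcal{O}_X} \mapsto s \cdot \int_X [\tau_{\mathcal{O}_X}]$, where $[\tau_{\mathcal{O}_X}] \in H^n(X, \omega_{X/S})$ is the image of $\tau_{\mathcal{O}_X}$ under projection to the second K\"unneth factor. It therefore remains to show $\int_X [\tau_{\mathcal{O}_X}] = 1$. This is the standard compatibility between the Serre duality trace and the Grothendieck residue along the diagonal, which characterises the global cohomology class locally represented by $\omega(y)/\prod_k(x_k - y_k)$ as pairing to $1$ under the Serre trace. The main obstacle in the plan is precisely this final normalisation: one must carefully align the conventions in the construction of $c$ (via the identification $\wedge^n(\mathcal{I}/\mathcal{I}^2)^\vee \otimes \Delta^*(\mathcal{O}_X \boxtimes \omega_{X/S}) \cong \mathcal{O}_X$) with those defining the Serre trace map, so that $\int_X [\tau_{\mathcal{O}_X}]$ evaluates to $+1$ rather than a sign-corrected variant. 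The local diagonal-ideal computation in the reduction step is itself elementary once the \v{C}ech description of $H^n_\Delta$ is in hand.
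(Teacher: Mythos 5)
Your proposal follows essentially the same route as the paper's own \v{C}ech-cohomological argument: compute the cup product with $\tau_{\mathcal{E}}$ locally, contract via the trace to reduce the class to $\frac{s \boxtimes \omega}{(x_1-y_1)\cdots(x_n-y_n)}$, and conclude by the compatibility of the diagonal residue class with the Serre duality trace. Your explicit justification of the substitution $s_i(y) \equiv s_i(x)$ modulo the image of the $M_{\widehat{f_k}}$ makes precise a step the paper performs silently, but the overall argument is the same.
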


\subsection{\v Cech check} 
We will now prove Proposition \ref{prop: id1} using \v Cech cohomology.
Let $\mc{U}$ be an open affine covering of $X$.  Because $X$ is separated, we can identify $R\Gamma(X, \mc{E})= C^*(\mc{U}, \mc{E})$ and $R\Gamma(X\times X, \O_X\boxtimes\mc{E})= C^*(\mc{U}\times \mc{U}, \O_X\times \mc{E})$.  After intersecting each $U_i\times U_j$ with the diagonal, we obtain an open affine covering $\mc{U}'$ of $X\times X$. 

We will compute the composition in Proposition \ref{prop: id1} up until the final morphism, given by the top row in the commutative diagram below, using the identifications made in the bottom row. 

\adjustbox{scale=0.9}{
\begin{tikzcd}
	{R\Gamma(X, \mc{E})} & {R\Gamma(X\times X, \O_X\boxtimes\mc{E})} & {R\Gamma_{\Delta}(X^2, \mc{E}\boxtimes (\mc{E}\otimes \mc{E}^*))} & {R\Gamma_{\Delta}(X^2, \mc{E}\boxtimes\omega_{X/S})} \\
	{C^*(\mc{U}, \mc{E})} & {C^*(\mc{U}\times \mc{U}, \O_X\boxtimes \mc{E})} & {C^*(\mc{U}\times \mc{U}, \mc{H}^n_{\Delta}(\mc{E}\boxtimes (\mc{E}\otimes \mc{E}^*)))} & {C^*(\mc{U}', \mc{H}^n_{\Delta}(\mc{E}\boxtimes \omega_{X/S}))} \\
	& {C^*(\mc{U}_{\alpha}\times \mc{U}_{\alpha}, \mc{H}^n_{\Delta}(\mc{E}\boxtimes (\mc{E}\otimes \mc{E}^*)))} & {C^*(\mc{U}', \mc{H}^n_{\Delta}(\mc{E}\boxtimes (\mc{E}\otimes \mc{E}^*)))}
	\arrow["{p_2^*}", from=1-1, to=1-2]
	\arrow["{\cup \tau_E}", from=1-2, to=1-3]
	\arrow["\Tr", from=1-3, to=1-4]
	\arrow["{p_2^*}"', from=2-1, to=2-2]
	\arrow["{\cup \tau_E}"', from=2-2, to=2-3]
	\arrow[from=1-1, to=2-1, equal]
	\arrow[from=1-2, to=2-2, equal]
	\arrow[from=1-3, to=2-3, equal]
	\arrow[from=2-3, to=2-4]
	\arrow["\cong"', from=3-2, to=2-3]
	\arrow["\cong"', from=3-2, to=3-3]
	\arrow[from=2-3, to=3-3, equal]
	\arrow["\Tr"', from=3-3, to=2-4]
	\arrow[from=1-4, to=2-4, equal]
\end{tikzcd}
}

Indeed, recall that in the Grothendieck spectral sequence associated to the composition $R\Gamma(\Delta, R\mc{H}_{\Delta}(\mc{E}\boxtimes (\mc{E}^*\otimes \mc{E})) = R\Gamma_{\Delta}(X\times X, \mc{E}\boxtimes (\mc{E}^*\otimes \mc{E}))$ the only nonzero degree of $\mc{H}^i_{\Delta}(\mc{E}\boxtimes (\mc{E}^*\otimes \mc{E})$ occurs when $i=n$.  Thus $R\Gamma_{\Delta}(X\times X, \mc{E}\boxtimes (\mc{E}^*\otimes \mc{E}))=C^*(\mc{U}', \mc{H}^n_{\Delta}(\mc{E}\boxtimes (\mc{E}\otimes \mc{E}^*)))$. \\ 

Now recall that $\tau_{\mc{E}}\in H^0(X \times_S X, \mc{H}^n_{\Delta}(\mc{E} \bt \mc{E}^*))$.  Thus taking the cup product with it lands in ${C^*(\mc{U}\times \mc{U}, \mc{H}^n_{\Delta}(\mc{E}\boxtimes (\mc{E}\otimes \mc{E}^*)))}$.  Since the sheaf is supported on the diagonal, we may compute its cohomology by restricting to the elements of the open cover $\mc{U}\times \mc{U}$ of the form $\mc{U}_\alpha \times \mc{U}_\alpha$.  Thus the natural inclusion morphism of \v Cech complexes ${C^*(\mc{U}_\alpha\times \mc{U}_\alpha, \mc{H}^n_{\Delta}(\mc{E}\boxtimes (\mc{E}\otimes \mc{E}^*)))}\rightarrow {C^*(\mc{U}\times \mc{U}, \mc{H}^n_{\Delta}(\mc{E}\boxtimes (\mc{E}\otimes \mc{E}^*)))}$ is an isomorphism in the derived category.  Restricting these covers to $\mc{U}'$ gives the identification with ${C^*(\mc{U}', \mc{H}^n_{\Delta}(\mc{E}\boxtimes (\mc{E}\otimes \mc{E}^*)))}$. \\ 

Thus, we wish to explicitly compute the value of the composition of the horisontal maps on an element $s\in C^*(\mc{U}, \mc{E})$ by evaluating along the bottom row.  We have $p_2^*s$ is the element 
\[
1\boxtimes s\in C^*(\mc{U}\times \mc{U}, \O_X\times \mc{E}), \qquad (1\boxtimes s)(\mc{U}_\alpha\times \mc{U}_\beta) = s(\mc{U}_{\beta}), 
\]
where $\alpha$ and $\beta$ are $p$-tuples of $s$ is in degree $p$.  Now we must compute its value when taking the cup product with $\tau_{\mc{E}}$, which on the open subsets $\mc{U}'$ is given by 
\[
\tau_{\mc{E}}|_{\mc{U}'} = \frac{\sum_{i = 1}^r e_i \bt (e_i^\vee \otimes \gw)}{(x_1 - y_1)\dots (x_n - y_n)}  
\]
By the earlier discussion, we can compute the cup product with this element by restricting to the values on $\mc{U}'$.  In doing so, the value the cocycle $s$ takes on $\mc{U}'_{\alpha} = (U_\beta\times U_\gamma)\cap \Delta$ is given by $s(U_\beta\times U_\gamma)|_{\mc{U}'_\alpha}$, which we will simply write as $s(\mc{U}'_{\alpha})$.   We have that $p_2^*s\cup \tau_E$ evaluated on $\mc{U}'_\alpha$ is 

\begin{align*}
(p_2^*s\cup \tau_E)(\mc{U}'_\alpha) 
&= \dfrac{(\sum_{i = 1}^r e_i \bt (e_i^\vee \otimes \gw))\otimes (1\boxtimes s)}{(x_1-y_1)\cdots (x_n-y_n)}(\mc{U}'_\alpha) \\ 
&= \dfrac{(\sum_{i = 1}^r e_i \bt (e_i^\vee \otimes \gw\otimes s ))}{(x_1-y_1)\cdots (x_n-y_n)}(\mc{U}'_\alpha)
\end{align*}

Applying the trace map, we get 

\begin{align*}
(\Tr (p_2^*s\cup \tau_E))(\mc{U}'_\alpha) &= \dfrac{\sum_{i = 1}^r (e_i \bt e_i^\vee(s) \gw)}{(x_1-y_1)\cdots (x_n-y_n)}(\mc{U}'_\alpha) \\ 
&= \dfrac{\sum_{i = 1}^r (e_i^\vee(s) e_i \bt \gw)}{(x_1-y_1)\cdots (x_n-y_n)}(\mc{U}'_\alpha) \\ 
&= \dfrac{s \bt \gw}{(x_1-y_1)\cdots (x_n-y_n)}(\mc{U}'_\alpha). 
\end{align*}
This shows that if we write factor the desired composition in Proposition \ref{prop: id1} as 
\[
R\Gamma(X, \mc{E}) \xrightarrow{g} R\Gamma(X^2, \mc{E}\boxtimes \omega_{X/S}[n])\xrightarrow{\text{K\"unneth}, \epsilon} R\Gamma(X, \mc{E}), 
\]
we have that the section $g(s)$ is locally represented by $\dfrac{s \bt \gw}{(x_1-y_1)\cdots (x_n-y_n)}$ on $\mc{U}'_\alpha$.  We recall that the term $\frac{1\bt \omega}{(x_1-y_1)\cdots (x_n-y_n)}$ represents a section of $R\Gamma(X^2, \O_X\boxtimes \omega_{X/S})$ by giving a \v{C}ech cocycle representing a global section of the local cohomology sheaf, under the isomorphism 
\[
 H^0(X \times_S X, \mc{H}^n_{\Delta}(\O_X \bt \omega_{X/S})) \cong H^n(X \times_S X, \O_X \bt \omega_{X/S}). 
\]
By \cite[\href{https://stacks.math.columbia.edu/tag/0G7Q}{Tag 0G7Q}]{stacks-project}, the section of $\mc{H}^n_{\Delta}(\O_X \bt \omega_{X/S})$ that $\frac{1\bt \omega}{(x_1-y_1)\cdots (x_n-y_n)}$ represents corresponds to $1\bt \omega$ as a section of $H^0(\mc{U}'_\alpha\times_S \mc{U}'_\alpha, \O_X|_{\mc{U}'_\alpha}\boxtimes\omega_{X/S}|_{\mc{U}'_\alpha}[n])$.  Moreover, by the construction of $\omega$ in Section \ref{sec: copairing locally free}, $\omega$ is the restriction to each $\mc{U}'$ of the element sent to 1 by the trace map $\eps: H^n(X, \omega_{X/S})\rightarrow A$.  Thus applying K\"unneth and $\eps$ to $g(s)$ yields $s$, as desired.  This completes the proof of Proposition \ref{prop: id1}.

\section{Extension to two-term complexes}
\label{sec: two-term}
\subsection{Definition of the copairing}
Continuing with the same notation as in the previous sections, let $\E$ and $\F$ be locally free sheaves on $X$.  Set $L = \mc{E} \overset{D}\to \mc{F}$ with $\E$ in degree 0, and $M = L^* = \mc{F}^* \overset{D^*}\to \mc{E}^*$ with $\E^*$ in degree 0.  By the filtered K\"unneth formula, to construct the copairing map $\eta: A \to R\Gamma(X, L) \tten_A R\Gamma(X, M)[n]$, it is enough to give a distinguished element
\be
\tilde{\eta}(1) \in H^n(X \times_S X, L \bt M).
\ee

The following proposition lets us produce this distinguished element in the same way as we did for 1 term complexes before.

\begin{proposition}
\label{prop: hyperrep}
Consider the sequence
\be
 \cdots\rightarrow 0\rightarrow H^n_{\Delta}(\mc{E} \bt \mc{F}^*) \to H^n_{\Delta}(\mc{E} \bt \mc{E}^*) \oplus H^n_{\Delta}(\mc{F} \bt \mc{F}^*) \to H^n_\Delta(\mc{F} \bt \mc{E}^*)\rightarrow 0\rightarrow\cdots 
\ee
where the middle term is placed in degree $n$ and the last map is induced by $D \bt 1 - 1 \bt D^*$, arising from the first spectral sequence of hypercohomology abutting to 
$R\Gamma_\Delta(X \times_S X, L \bt M)$.  Then an element of $H^n_{\Delta}(\mc{E} \bt \mc{E}^*) \oplus H^n_{\Delta}(\mc{F} \bt \mc{F}^*)$ in the kernel of $D \bt 1 - 1 \bt D^*$ naturally defines an element of $H^n(R\Gamma_\Delta(X \times_S X, L \bt M))$.
\end{proposition}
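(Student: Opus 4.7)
My plan is to identify the given three-term sequence with the nonzero row of the $E_1$-page of the hypercohomology spectral sequence
\[
E_1^{p,q} = H^q_{\Delta}\bigl(X \times_S X,\, \Tot(L \boxtimes M)^p\bigr) \Longrightarrow H^{p+q}\bigl(R\Gamma_\Delta(X \times_S X, L \boxtimes M)\bigr),
\]
obtained by applying $R\Gamma_\Delta$ term-wise to the total complex of the double complex $L \boxtimes M$. Concentration of the $E_1$-page in a single row will then force degeneration at $E_2$ and directly yield the natural map from the kernel into $H^n$.

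First I would unwind the double complex. Since $L = (\mc{E} \xrightarrow{D} \mc{F})$ has $\mc{E}$ in degree $0$ and $M = L^* = (\mc{F}^* \xrightarrow{D^*} \mc{E}^*)$ has $\mc{F}^*$ in degree $-1$, the total complex lives in degrees $-1, 0, 1$ with
\[
\Tot^{-1} = \mc{E} \boxtimes \mc{F}^*, \qquad \Tot^0 = \mc{E} \boxtimes \mc{E}^* \oplus \mc{F} \boxtimes \mc{F}^*, \qquad \Tot^1 = \mc{F} \boxtimes \mc{E}^*.
\]
With the Koszul sign convention $(-1)^p$ on the horizontal differential, a direct check shows that $d^0 \colon \Tot^0 \to \Tot^1$ is $(y,z) \mapsto (D \boxtimes 1)(y) - (1 \boxtimes D^*)(z)$, matching the formula in the statement, while $d^{-1} \colon \Tot^{-1} \to \Tot^0$ is $x \mapsto \bigl((1 \boxtimes D^*)(x),\, (D \boxtimes 1)(x)\bigr)$.

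The crucial observation is that each of the four terms $L^p \boxtimes M^q$ is locally free on $X \times_S X$, since $\mc{E}, \mc{F}$ are locally free on $X$ and $\omega_{X/S}$ is a line bundle, so their pullbacks along $p, q \colon X \times_S X \to X$ are locally free and tensor products preserve this. Because the diagonal $\Delta$ is a Koszul-regular immersion of codimension $n$ (\cite[\href{https://stacks.math.columbia.edu/tag/069G}{Tag 069G}]{stacks-project}), \cref{lem:coh} gives $H^q_\Delta((L \boxtimes M)^p) = 0$ for $q \neq n$. Hence the $E_1$-page is concentrated in the single row $q = n$, which reads exactly as the sequence displayed in the statement, with $E_1$-differentials induced by those of the total complex computed above.

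Since only one row is nonzero, the spectral sequence degenerates at $E_2$ and the abutment yields
\[
H^n\bigl(R\Gamma_\Delta(X \times_S X, L \boxtimes M)\bigr) = E_\infty^{0,n} = E_2^{0,n} = \frac{\ker(D \boxtimes 1 - 1 \boxtimes D^*)}{\operatorname{im}(d^{-1})}.
\]
The natural map claimed in the proposition is then the composition of the inclusion $\ker(D \boxtimes 1 - 1 \boxtimes D^*) \hookrightarrow E_1^{0,n}$ with the quotient $E_1^{0,n} \twoheadrightarrow E_\infty^{0,n}$. The only mildly delicate point I foresee is bookkeeping of Koszul signs so that the $E_1$-differential at the middle term matches $D \boxtimes 1 - 1 \boxtimes D^*$ exactly; this is a straightforward verification rather than a genuine obstacle.
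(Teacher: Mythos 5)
Your overall strategy (the first hypercohomology spectral sequence applied to $R\Gamma_\Delta$ of the total complex, combined with the codimension-$n$ Koszul-regularity of the diagonal) is the same as the paper's, and your identification of the total complex and of the $d_1$-differential is correct. However, there is a genuine error at the key step: you claim that $H^q_\Delta\bigl(X\times_S X, (L\boxtimes M)^p\bigr)=0$ for $q\neq n$, citing Lemma~\ref{lem:coh}. That lemma only gives the vanishing of the local cohomology \emph{sheaves} $\mc{H}^q_\Delta(\mc{G})$ for $q\neq n$; the local cohomology \emph{groups} appearing on the $E_1$-page satisfy $H^q_\Delta(X\times_S X,\mc{G})\cong H^{q-n}\bigl(\Delta,\mc{H}^n_\Delta(\mc{G})\bigr)$ by Corollary~\ref{corollary: cohg with support as cohg}, and since $\Delta\cong X$ is proper of relative dimension $n$, these can be nonzero for every $q$ in the range $[n,2n]$. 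So the $E_1$-page is \emph{not} concentrated in the single row $q=n$; it is only concentrated in the region $q\geq n$ (and $-1\leq p\leq 1$). Consequently your asserted degeneration at $E_2$ and the identification $H^n\bigl(R\Gamma_\Delta(X\times_S X, L\boxtimes M)\bigr)=\ker(D\boxtimes 1-1\boxtimes D^*)/\operatorname{im}(d^{-1})$ are false in general: there can be a nonzero $d_2\colon E_2^{-1,n+1}\to E_2^{1,n}$, and $H^n$ of the abutment carries a two-step filtration whose other graded piece is a subquotient of $H^{n+1}_\Delta(\mc{E}\boxtimes\mc{F}^*)$.

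The conclusion you want nonetheless survives, because the proposition only asks for a map \emph{into} $H^n$, not an isomorphism. What you actually need is the weaker, correct vanishing $E_1^{p,q}=0$ for $q<n$ together with the boundedness $E_1^{p,q}=0$ for $p\notin\{-1,0,1\}$. These imply (i) every differential out of position $(0,n)$ on pages $r\geq 2$ lands in a row $q=n-r+1<n$ and hence vanishes, and every incoming differential on pages $r\geq 2$ originates at $p=-r\leq -2$ and hence vanishes, so a class in $\ker(d_1)$ survives to $E_\infty^{0,n}=E_2^{0,n}$; and (ii) $E_\infty^{1,n-1}=0$, so $E_\infty^{0,n}=F^0H^n$ actually \emph{injects} into $H^n$ rather than merely being a subquotient. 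Point (ii) deserves to be said explicitly, since with the stupid filtration $E_\infty^{0,n}=F^0H^n/F^1H^n$ is a priori a quotient of a subobject. This corrected argument is exactly the one the paper gives: it observes only that $H^{a+b}_\Delta$ of each term vanishes for $a+b<n$ (equivalently, the $E_1$-terms vanish below row $n$) and that all higher differentials out of $(0,n)$ "go downwards," without ever claiming single-row concentration.
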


\begin{proof}
Let $\mc{G}$ denote one of the locally free sheaves $\mc{E}\boxtimes \mc{E}^*, \mc{E}\boxtimes \mc{F}^*, \mc{F}\boxtimes\E^*, \F\boxtimes \F^*$. 
Note that the $E_2$-spectral sequence $E_2^{a, b} = H^a(X\times X, \mc{H}^b_{\Delta}(\mc{G}))\Rightarrow H^{a+b}_{\Delta}(X\times X, \mc{G})$ degenerates with only nonzero term when $b=n$, by Lemma \ref{lem:coh}.  In particular, $H^{a+b}_{\Delta}(X\times X, \mc{G})$ is only nonzero for $a+b\ge n$.  

Next, observe that
\be
L \bt M \cong \mc{E} \bt \mc{F}^* \to (\mc{E} \bt \mc{E}^*) \oplus (\mc{F} \bt \mc{F}^*) \overset{D \bt 1 - 1 \bt D^*}\to \mc{F} \bt \mc{E}^*
\ee
where the middle term is in degree 0.
Then the first spectral sequence of hypercohomology for $R\Gamma_\Delta(X \times_S X, L \bt M)$ gives an $E_1$-spectral sequence consisting of terms of the form $E_1^{a, b} = H^{b}_{\Delta}(X\times X, \mc{G})$, which are only nonzero for $b\ge n$.  Thus any element of $H^n_{\Delta}(\mc{E} \bt \mc{E}^*) \oplus H^n_{\Delta}(\mc{F} \bt \mc{F}^*)$ in the kernel of $D \bt 1 - 1 \bt D^*$ is also in the kernel of all differentials from that position on subsequent pages of the spectral sequence, as such differentials go downwards.  This proves the desired result. 
\end{proof}

An element of $H^n(R\Gamma_{\Delta}(X\times_S X, L\boxtimes M))$ gives an element of $H^n(X\times_S X, L\boxtimes M)$ by the natural map between them.  
Thus to define a distinguished element $\tilde{\eta}(1) \in H^n(X \times_S X, L \bt M)$, it suffices to give elements of $H^n_\Delta(\mc{E} \bt \mc{E}^*)$ and $H^n_{\Delta}(\mc{F} \bt \mc{F}^*)$ which are killed by $D \bt 1 - 1 \bt D^*$.  
For these, we will take the elements $\tau_{\mc{E}}$ and $\tau_{\mc{F}}$ constructed in Section \ref{sec: copairing locally free} and let $\tilde{\eta}(1)$ correspond to $\tau_{\mc{E}} + \tau_{\mc{F}}$. 

\begin{construction}
    [Construction for two-term complexes] Define the copairing map \be
\eta: A \to R\Gamma(X, L) \tten_A R\Gamma(X, M).
\ee
by setting $
\tilde{\eta}(1) \in H^n(X \times_S X, L \bt M)$ to be the image of $\tau_{\E}+\tau_{\F}\in H^n_{\Delta}(X\times_S X, L\boxtimes M)$ in $H^n(X\times_S X, L\boxtimes M)$.
\end{construction} 

To show that this construction is well-defined, we must check that $\tau_{\E}+\tau_{\mc{F}}$ is indeed a cocycle in the representation of $R\Gamma_\Delta(X \times_S X, L \bt M)$ given by Proposition \ref{prop: hyperrep}.  We recall that $\tau_{\E}$ and $\tau_{\F}$ are defined in the discussion following Lemma \ref{lem: diag}, which we briefly review now.  In this definition, we first look at an open covering of $\Delta_X$ by $\Delta_U$ over which $\mc{E}$ and $\mc{F}$ are locally free, giving coordinates $x_1, \ldots, x_n, y-1, \ldots, y_n$.  Then restrict to a smaller open covering by $\Delta_{U'}$ over which $V(x_1-y_1, \ldots x_n-y_n) = \Delta_U$.  

\subsection{Proof of well-definedness}
Because \'etale morphisms are locally standard \'etale, we may assume that $U$ is standard \'etale over $S=\Spec A$, which will help in our explicit calculation.   

\begin{proposition}
\label{proposition: equality}
    Work in a local chart $U \subset X$ over which $\mc{E}$ and $\mc{F}$ are free with coordinates $x_1, \dots, x_n$ giving a standard \'etale map to $\mathbb{A}^n_S$.  Let $x_1, \dots, x_n, y_1, \dots, y_n$ denote the corresponding coordinates on $U \times_S U \subset X \times_S X$. Letting $e_1, \dots, e_r$ be a local basis of $\mathcal{E}$ and $f_1,\dots f_t$ be a local basis of $\mathcal{F}$ and $ \omega:= dy_1 \wedge \dots \wedge dy_n$ be a trivialisation of $\omega_{X/S}$.  As in Lemma \ref{lem: diag}, let $g\in \Gamma(U\times_S U, \O)$ be a function such that $U'=D(g)$ contains $\Delta_U$ and $V(x_1-y_1, \ldots, x_n-y_n) = \Delta_U$.  We have the elements $\tau_{\E}, \tau_{\F}$ of $H^n_\Delta(\mc{E} \bt \mc{E}^*)$ and $H^n_{\Delta}(\mc{F} \bt \mc{F}^*)$ respectively defined locally by
\begin{align*}
\tau_{\mathcal{E}}|_{U'} &= \frac{\sum_{i = 1}^r e_i \boxtimes (e_i^\vee \otimes \omega)}{(x_1 - y_1)\dots (x_n - y_n)} \in \Gamma(U', \mc{H}^n_{\Delta}(\E\boxtimes \E^*)) \\
\tau_{\mathcal{F}}|_{U'} &= \frac{\sum_{i = 1}^t f_i \boxtimes (f_i^\vee \otimes \omega)}{(x_1 - y_1)\dots (x_n - y_n)} \in \Gamma(U', \mc{H}^n_{\Delta}(\F\boxtimes \F^*)) .
\end{align*}
Then $(D \boxtimes 1) (\tau_{\mathcal{E}}) = (1 \boxtimes D^*)(\tau_{\mathcal{F}})$ in $H^n_{\Delta}(X \times_S X, \mathcal{F}^* \boxtimes \mathcal{E})$.
\end{proposition}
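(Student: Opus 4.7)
The plan is to verify the identity locally on each open $U'$ of the cover, reducing by matrix expansion to the rank-one case and then using multiplicativity of $(-)^*$ (from \cref{sec: dual diff}) to break $D$ into elementary building blocks.

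First, I would reduce to the rank-one case $\mathcal{E} = \mathcal{F} = \mathcal{O}_U$. Writing $D$ as a matrix $(D_{ji})$ of differential operators $\mathcal{O}_U \to \mathcal{O}_U$ with respect to $\{e_i\}$ and $\{f_j\}$, and using that $D^*$ has the transposed matrix of dualised entries $((D_{ji})^*)_{i,j}$, a direct expansion shows that both $(D \boxtimes 1)(\tau_\mathcal{E})|_{U'}$ and $(1 \boxtimes D^*)(\tau_\mathcal{F})|_{U'}$ decompose as $\sum_{i,j} c_{ij} \cdot f_j \boxtimes (e_i^\vee \otimes \omega)$, with the coefficients $c_{ij}$ obtained by letting $D_{ji}$ act in $x$-variables and $(D_{ji})^*$ act in $y$-variables, respectively, applied to the function $\tau_0 := 1/\prod_k(x_k - y_k)$. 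So it suffices to prove, for each differential operator $E \colon \mathcal{O}_U \to \mathcal{O}_U$, the identity $E_x(\tau_0) \equiv E^*_y(\tau_0)$ in $\mathcal{H}^n_\Delta(\mathcal{O}_{U'})$, where $E^*_y$ denotes the dual operator acting with $y$-derivatives via the formula in \cref{sec: dual diff}.

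Next, I verify this identity on two families of building blocks. For $E = h(x)$ (multiplication), $E^*_y = h(y)$, and the difference $(h(x) - h(y))\tau_0$ vanishes in $\mathcal{H}^n_\Delta$: since $h(x) - h(y)$ lies in the diagonal ideal, it can be written as $\sum_k (x_k - y_k) g_k(x,y)$, so the difference becomes $\sum_k g_k/\prod_{l \neq k}(x_l - y_l)$, each summand missing one factor from the denominator and hence trivial by \cref{lem:coh}. For a divided-power derivative $E = \tfrac{1}{\alpha!}\partial^\alpha_x$, with dual $E^*_y = (-1)^{|\alpha|}\tfrac{1}{\alpha!}\partial^\alpha_y$, a direct calculation yields
\[
E_x(\tau_0) = \frac{(-1)^{|\alpha|}}{\prod_k (x_k - y_k)^{1+\alpha_k}} = E^*_y(\tau_0),
\]
on the nose, reflecting the coordinatewise symmetry $\tfrac{1}{\alpha_k!}\partial^{\alpha_k}_{x_k}((x_k - y_k)^{-1}) = (-1)^{\alpha_k} \tfrac{1}{\alpha_k!}\partial^{\alpha_k}_{y_k}((x_k - y_k)^{-1})$.

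To extend to arbitrary $E$, observe that if $E_x(\tau_0) \equiv E^*_y(\tau_0)$ holds (modulo trivial classes) for $E_1$ and $E_2$, then it also holds for $E = E_1 \circ E_2$:
\[
E_x(\tau_0) = (E_1)_x(E_2)_x(\tau_0) \equiv (E_1)_x(E_2)^*_y(\tau_0) = (E_2)^*_y(E_1)_x(\tau_0) \equiv (E_2)^*_y (E_1)^*_y(\tau_0) = E^*_y(\tau_0),
\]
using the hypotheses on $E_1, E_2$, the commutativity of $x$- and $y$-differential operators, $(E_1 E_2)^* = E_2^* E_1^*$ from \cref{sec: dual diff}, and the fact that differential operators in either set of variables preserve the subspace of trivial classes in $\mathcal{H}^n_\Delta$; indeed, that subspace consists of sections regular in at least one $x_k - y_k$, a property preserved by differentiation and multiplication. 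Since every $S$-differential operator locally decomposes as a finite sum of compositions of multiplications by functions and divided-power derivatives, this concludes the proof. The main obstacle lies in the first step: while each individual computation is routine, carefully tracking the combinatorics of the matrix expansion to isolate the coefficient of $f_j \boxtimes (e_i^\vee \otimes \omega)$ requires bookkeeping, whereas the subsequent building-block verifications and composition argument are more mechanical.
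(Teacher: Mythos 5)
Your proposal is correct and follows essentially the same route as the paper's proof: reduce to the rank-one case, verify the identity $E_x(\tau_0)\equiv E^*_y(\tau_0)$ on generators (zeroth-order operators, where one uses that the diagonal ideal on $U'$ is exactly $(x_1-y_1,\dots,x_n-y_n)$, and divided-power derivatives, where the identity holds on the nose), and then propagate to compositions using the commutativity of $x$- and $y$-operators together with the fact that differential operators preserve the localisations defining the trivial classes. The only cosmetic difference is that you treat all divided powers $\tfrac{1}{\alpha!}\partial^{\alpha}$ uniformly, whereas the paper checks $\partial/\partial x_i$ and $\tfrac{1}{(p^k)!}\partial^{p^k}/\partial x_i^{p^k}$ separately; both choices generate the same algebra of operators.
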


Before we proceed with the proof, we will make explicit what we need to prove in terms of algebra.  Since $U=\Spec R$ is standard \'etale over $\mb{A}^n_S$, where $S=\Spec A$, by definition we can write $R = (A[x_1, \ldots, x_n][\alpha]/f_x)_{h_x}$ where $f_x, h_x\in A[x_1, \ldots, x_n][\alpha]$, $f_x(\alpha)$ is monic, and $f_x'(\alpha)$ is invertible in $R$.  Then $U' = \Spec (A[x_1, \ldots, x_n][\alpha]/f_x)_{gh_x}$ 

It suffices to prove the case where $\mc{E}$ and $\mc{F}$ are both rank 1. 
 Given an $A$-linear differential operator $D$ on $R$, let $D_x$ and $D_y$ be the $A$-linear differential operators on $R\otimes_A R$ defined by $D_x(r\otimes r') = D(r)\otimes r'$ and $D_y(r\otimes r') = r\otimes D(r')$.  
We wish to check that $(D\boxtimes 1)(\tau_{\E}|_{U'}) - (1\boxtimes D^*)(\tau_{\F}|_{U'}) = 0$.  
The explicit representative we are given for this element is in terms of the extended alternating \v{C}ech complex.  
Therefore, if we let $\xi = ((x_1 - y_1) ... (x_n - y_n))^{-1}\in (R\otimes_A R)_{g(x_1-y_1)\cdots (x_n-y_n)}$, then it suffices to show that 
\[
D_x(\xi) - D_y^*(\xi) \in \im\left( \bigoplus_{i=1}^n (R\otimes_A R)_{g(x_1-y_1)\cdots \widehat{(x_i-y_i)}\cdots (x_n-y_n)}\rightarrow (R\otimes_A R)_{g(x_1-y_1)\cdots (x_n-y_n)} \right). 
\]

\begin{proof}
[Proof of Proposition \ref{proposition: equality}] 
The proof proceeds in two steps.  First we show the result holds for a class of finite order differential operators which generate all finite order differential operators as an algebra.  Then we show that if the result holds for two differential operators $D_1$ and $D_2$, then it holds for the composition $D = D_1D_2$.  

Recall that a finite order differential operator can be written as a sum of terms of the form $r\Big( \frac{1}{\beta!}\cdot \frac{\partial}{\partial x_i^\beta}\Big)$, with $r\in R$.  Thus for the first step it suffices to prove the case of $D_x=\frac{\partial}{\partial x_i}$, the case $D_x = \frac{1}{(p^k)!}\frac{\partial}{\partial x_1^{p^k}}$ (this case is only relevant in characteristic $p$), and finally, the case of 0th order differential operators given by some element $p_x(\alpha)\in R$ where $p$ is a one-variable polynomial with coefficients in $A[x_1, \ldots, x_n]_{gh_x}$. 

For the first step, if $D_x = \frac{\partial}{\partial x_i}$ then $D_y^* = -\frac{\partial}{\partial y_i}$, and 
\[
\frac{\partial}{\partial x_i}\left(\frac{1}{(x_1-y_1)\cdots (x_n-y_n)}\right) - \left(-\frac{\partial}{\partial y_i}\left(\frac{1}{(x_1-y_1)\cdots (x_n-y_n)}\right)\right) = 0.
\]

Second, for $D_x = \frac{1}{(p^k)!}\frac{\partial}{\partial x_i^{p^k}}$, we have $D_y = -\frac{1}{(p^k)!}\frac{\partial}{\partial y_i^{p^k}}$ if $p$ is odd and $D_y = \frac{1}{(p^k)!}\frac{\partial}{\partial y_i^{p^k}}$ if $p$ is 2.  In either case, since the action of $D_x$ and $D_y^*$ on $\xi$ is the same as one would get by formally taking derivatives as usual, a simple computation shows that $D_x(\xi)-D_y^*(\xi)=0$.

 In the 0th order case, we have $D_x(\xi) - D_y(\xi) = (p_x(\alpha)-p_y(\beta))\xi$, where $p_x$ is a single-variable polynomials with coefficients in $A[x_1, \ldots, x_n]_{h_x}$ and $p_y$ is the same except with $x_i$ replaced with $y_i$ and $h_x$ replaced with $h_y$.  The fact that $V(x_1-y_1, \ldots, x_n-y_n)=\Delta_U$ implies that $\alpha-\beta\in (x_1-y_1, \ldots, x_n-y_n)$.  Then setting $x_i=y_i$ and $\alpha = \beta$, we obtain $p_x(\alpha)-p_y(\beta)=0$; this means that $p_x(\alpha)-p_y(\beta)\in (x_1-y_1, \ldots, x_n-y_n)\subset (R\otimes_A R)_g$, and the desired result follows.

Now for the second part, we will show that if the result holds for a pair of differential operators $D_1, D_2$, then it holds for their composition $D = D_1D_2$.  We use the following two facts.  
\begin{enumerate}
    \item Given any differential operator $T\colon R\otimes_A R \rightarrow R\otimes_A R$, any element $h\in R\otimes_A R$ induces a differential operator $T\colon (R\otimes_A R)_h \rightarrow (R\otimes_A R)_h$. (This follows from \cite[\href{https://stacks.math.columbia.edu/tag/0G44}{Tag 0G44}]{stacks-project}.) 

    \item For any pair of $A$-linear differential operators $D', D''$ on $R$ we
have that $D'_x$ and $D''_y$ commute.
\end{enumerate}

We know that

\[D_{1, x}( \xi ) = D_{1, y}^*( \xi ) + \sum E_{1, i}, \quad D_{2, x}( \xi ) = D_{2, y}^*( \xi ) + \sum E_{2, i}\]

where $E_{1, i}, E_{2, i}\in (R\otimes_A R)_{g(x_1-y_1)\cdots \widehat{(x_i-y_i)}\cdots (x_n-y_n)}$.

Now we use the second fact to compute the following.

\begin{align*}
  D_x( \xi )
= D_{1, x}( D_{2, x}( \xi ) )
&= D_{1, x}( D_{2, y}^*(\xi) + \sum E_{2, i} ) \\
&= D_{1, x}( D_{2, y}^*(\xi) ) + \sum D_{1, x}( E_{2, i} ) \\
&= D_{2, y}^*( D_{1, x}( \xi ) ) + \sum D_{1, x}( E_{2, i} ) \\
&= D_{2, y}^*( D_{1, y}^*( \xi ) + \sum E_{1, i} ) + \sum D_{1, x}( E_{2, i} ) \\ 
&= D_{2, y}^*( D_{1, y}^*( \xi ) ) + D_{2, y}^*( \sum E_{1, i} ) + \sum
D_{1, x}( E_{2, i} ) \\
&= D_y^*( \xi ) + \sum D_{2, y}^*( E_{1, i} ) + \sum D_{1, x}( E_{2, i} ).  
\end{align*}
Thus by the first fact, $D_x(\xi)-D_y(\xi)$ is indeed an element of $\bigoplus_{i=1}^n (R\otimes_A R)_{g(x_1-y_1)\cdots \widehat{(x_i-y_i)}\cdots (x_n-y_n)}$, as desired.  This completes the proof of the proposition.
\end{proof}

\subsection{Recollections on duality in the filtered derived category}
In order to properly describe how the constructed copairing is dual to the one given by Serre duality, we will use the language of filtered derived categories.  We thus recall the basic notions of duality in the filtered derived category. 

Given a ring $A$, the filtered derived category $DF(A)$ arises as the localisation of the category of filtered chain complexes at graded quasi-isomorphisms, i.e., morphisms which induce an isomorphisms at the graded level.  The filtered derived category has an internal Hom functor 
\begin{equation}
    R\underline{\mathrm{Hom}}(-,-)\colon DF(A)\times DF(A)\to DF(A)
\end{equation}
 and a symmetric monoidal filtered tensor product 
\begin{equation}
\label{tensor}
    -\underline{\otimes}^L-\colon DF(A)\to DF(A).
\end{equation}

These functors are adjoints and the following formulas hold:

\begin{equation}\label{global adjunction}
    \mathrm{Hom}_{DF(A)}(M\underline{\otimes}^LN,P)=\mathrm{Hom}_{DF(A)}(M,R\underline{\mathrm{Hom}}(M,P)),
\end{equation}
\begin{equation}\label{local adjunction}
    R\underline{\mathrm{Hom}}(M\underline{\otimes}^LN,P)=R\underline{\mathrm{Hom}}(M,R\underline{\mathrm{Hom}}(M,P)).
\end{equation}

The functor \ref{tensor} makes $(DF(A),\underline{\otimes}^L)$ into a symmetric monoidal category and therefore we can talk about the notion of dualisable objects in $(DF(A),\underline{\otimes}^L)$. 

We recall here that in a symmetric monoidal category $(\mathcal{C},\otimes)$ an object $A$ is dualisable with dual $A^\vee$ if there is an \emph{evaluation} morphism $\epsilon\colon A\otimes A^\vee \to  \mathbf{1}_{\cC}$ and a \emph{coevaluation} morphism $\eta \colon  \mathbf{1}_{\cC}\to  A\otimes A^\vee$ which together satisfy certain compatibility conditions.

\begin{example}
    Let $D(A)$ be the usual derived category then an object $P\in D(A)$ is dualisable precisely when it is perfect i.e. quasi-isomorphic to a complex of finite projective modules over $A$. In fact when $P$ is perfect, then it's dual is given by $R\mathrm{Hom}(-,A)$, the coevaluation map $\eta\colon A\to R\mathrm{Hom}(P,A)\otimes^L P$ is the trace, while the evaluation $R\mathrm{Hom}(P,A)\otimes^L P\to A$ is the usual evaluation map, arising from the counit of the adjunction $$\mathrm{Hom}_{D(A)}(R\mathrm{Hom}(P,A)\otimes^LP,A)=\mathrm{Hom}_{D(A)}(R\mathrm{Hom}(P,A),R\mathrm{Hom}(P,A)).$$ 
\end{example}

In $DF(A)$ we thus want to isolate a notion of perfect complexes which interacts well with the notion of perfectness in $D(A).$ 
Note that dualisability is a diagramatic notion and symmetric monoidal functors send dualisable objects to dualisable objects. The functor $\mathrm{Forget}\colon DF(A)\to D(A)$ is symmetric monoidal and so dualisable objects in $DF(A)$ should remain dualisable after forgetting the filtration. 
Similar considerations with the symmetric monoidal functors $\gr_i\colon DF(A)\to D(A)$ and $\gr\colon DF(A)\to DG(A)\xrightarrow{\mathrm{Forget}} D(A)$ (where $DG(A)$ is the graded derived category) show that if $P\in DF(A)$ is perfect, then $\gr_iP\in D(A)$ should be perfect for each $i$ and that $\gr_iP$ can only be non-zero in finitely many degrees.

From this we obtain the following characterisation of dualisable objects.

\begin{proposition}\label{dualisable characterisation}
    The following are equivalent
    \begin{enumerate}
        \item $P\in DF(A)$ is dualisable in $(DF(A),\underline{\otimes}^L),$
\item $P$ has a finite filtration (i.e. $\gr_iP$ is non-zero for finitely many $i$) and all the $\gr_iP$ are perfect in $D(A)$,
\item $P\in DF(A)$ is dualisable in $(DF(A),\underline{\otimes}^L)$ with dual $R\underline{\mathrm{Hom}}(P,A\{0\}).$
    \end{enumerate}
\end{proposition}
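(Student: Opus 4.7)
The implication (3) $\Rightarrow$ (1) is tautological. The plan is to establish the other two implications by exploiting the formalism of symmetric monoidal categories.

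For (1) $\Rightarrow$ (2), the key observation is that the grading functors $\gr_i \colon DF(A) \to D(A)$ and the total associated graded functor $\gr \colon DF(A) \to DG(A)$ are symmetric monoidal, and symmetric monoidal functors preserve dualisable objects. Applied to $\gr_i$, this shows each $\gr_i P$ is dualisable in $D(A)$, hence perfect by the preceding example. For finiteness of the filtration, I would examine $\gr P \in DG(A)$: the unit of $DG(A)$ is $A$ concentrated in weight zero, and $A$ is a compact object of $D(A)$, so the coevaluation $A \to \gr P \otimes (\gr P)^{\vee}$ factors through a finite direct sum of weight components. The zigzag identity then forces $\gr_k P = 0$ for all weights $k$ outside this finite set, giving the finite filtration condition.

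For (2) $\Rightarrow$ (3), the plan is induction on the number of indices $i$ with $\gr_i P \neq 0$. In the base case $P \simeq M\{i\}$ with $M$ perfect, twisting by $\{-i\}$ reduces to $i = 0$, and one verifies directly that the coevaluation and evaluation for $M$ in $D(A)$ lift to the filtered setting. Using the adjunctions \ref{global adjunction} and \ref{local adjunction} one identifies $R\underline{\mathrm{Hom}}(M\{0\}, A\{0\}) \simeq M^{\vee}\{0\}$ and checks the triangle identities. For the inductive step, the fibre sequence
\[
F^{i+1} P \longrightarrow P \longrightarrow P / F^{i+1} P
\]
exhibits $P$ as an extension of two objects whose filtrations have strictly fewer non-zero graded pieces, both dualisable by induction. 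The full subcategory of dualisable objects in any closed symmetric monoidal triangulated category is thick — in particular closed under taking cones — so $P$ is itself dualisable. Applying $R\underline{\mathrm{Hom}}(-, A\{0\})$ to the fibre sequence produces a cofibre sequence whose outer terms are (by induction) the duals of $F^{i+1} P$ and $P/F^{i+1} P$; the universal property of duals then forces $R\underline{\mathrm{Hom}}(P, A\{0\})$ to be the dual of $P$, yielding (3).

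The main obstacle will be the base case of (2) $\Rightarrow$ (3): one must check that the filtered internal Hom $R\underline{\mathrm{Hom}}(M\{0\}, A\{0\})$ genuinely computes $M^{\vee}\{0\}$ and that the filtered coevaluation and evaluation satisfy the triangle identities on the nose. Once this is in hand, the inductive step is a standard thick-subcategory argument. An alternative that sidesteps the explicit base-case bookkeeping would be to invoke the Rees equivalence between $DF(A)$ and an appropriate subcategory of graded $A[t]$-modules and transport dualisability along that equivalence, but I would prefer the direct argument above since it exhibits the dual explicitly as claimed in (3).
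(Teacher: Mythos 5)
Your argument is correct, and it is worth noting that the paper itself does not actually prove this proposition: it only sketches the implication (1) $\Rightarrow$ (2) in the preceding prose (via the observation that the symmetric monoidal functors $\gr_i$ and $\gr$ preserve dualisable objects) and then states the equivalence. Your (1) $\Rightarrow$ (2) coincides with that sketch, with the useful addition of an actual finiteness argument: compactness of the unit $A$ in $D(A)$ forces the weight-zero component of the coevaluation to factor through a finite subsum of $\bigoplus_i \gr_i P \otimes^{\mathbf{L}} \gr_{-i}(P^\vee)$, and the zigzag identity applied weight by weight then kills $\gr_k P$ for $k$ outside that finite set. This is exactly the right way to make the paper's phrase ``$\gr_i P$ can only be non-zero in finitely many degrees'' into a proof. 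Your (2) $\Rightarrow$ (3) via induction on the number of non-zero graded pieces and thickness of the subcategory of dualisable objects is also sound; the key inputs (the internal Hom and tensor are exact in each variable, so the objects $X$ for which $R\underline{\mathrm{Hom}}(X,A\{0\}) \underline{\otimes}^L Y \to R\underline{\mathrm{Hom}}(X,Y)$ is an isomorphism for all $Y$ form a thick subcategory containing the unit) are standard.

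Two small remarks. First, the base case can be streamlined so that no explicit verification of triangle identities is needed: the functor $D(A) \to DF(A)$, $M \mapsto M\{0\}$, is symmetric monoidal and hence preserves dualisability, and $M\{i\} \simeq M\{0\} \underline{\otimes}^L A\{i\}$ with $A\{i\}$ invertible, so $M\{i\}$ is dualisable whenever $M$ is perfect; the identification of the dual with $R\underline{\mathrm{Hom}}(M\{i\}, A\{0\}) \simeq M^\vee\{-i\}$ is then automatic from the general fact that the dual of any dualisable object in a closed symmetric monoidal category is $R\underline{\mathrm{Hom}}(-, \mathbf{1})$. For the same reason, your final step deducing (3) from the cofibre sequence is unnecessary: once $P$ is known to be dualisable, (3) follows formally. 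Second, you should make explicit that an object of $DF(A)$ with a single non-zero graded piece $\gr_i P$ is isomorphic in $DF(A)$ to $(\gr_i P)\{i\}$; this holds because the natural maps $F^i P \to P$ and $F^i P \to F^i P / F^{i+1} P$ are graded quasi-isomorphisms under the hypothesis, but it is a statement about the localisation that deserves a sentence.
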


\begin{definition}\label{definition of perfect complexes}
    A complex $P\in DF(A)$ is called perfect if it satisfies the equivalent properties in Proposition \ref{dualisable characterisation}.
\end{definition}

\subsection{Compatibility with filtered Serre duality} 
Returning to the specific situation of this paper, using the Hodge filtration, the copairing map 
\[
\eta: A\{0\} \to R\Gamma(X, L) \tten_A R\Gamma(X, M)[n]
\]
is naturally a map in the filtered derived category with $A$ placed in filtered degree 0. 
In the case $M=L^*$, we will establish that in fact this map is a coevaluation in the filtered derived category $DF(A)$, in the sense that there is an evaluation morphism $$\varepsilon\colon R\Gamma(X,L)\tten_A R\Gamma(X,M)\to A\{0\}$$ which along with $\eta$ satisfies the axioms of dualisable objects.  
In other words, it is a coevaluation in the sense of symmetric monoidal categories. 

To do so, we will introduce an intermediate notion of a coevaluation which is in fact equivalent to the desired definition.

\begin{definition}
    Suppose $P,Q\in DF(A)$. Then a map $\eta \colon A\{0\}\to  P\underline{\otimes}^L Q$ is called a graded coevaluation if upon taking $\gr_0$, for each $i\in \ZZ$ the composite map
$$\gr_0(\eta)_i\colon A\to \bigoplus_{i\in \ZZ} \gr_iP\otimes^L \gr_{-i}Q\to    \gr_i P\otimes^L \gr_{-i}Q$$ are the coevaluation maps coming from a duality between $\gr_iP$ and $\gr_{-i}Q$ in $D(A)$ .
\end{definition}

\begin{proposition}
\label{lemma: copairing sym}
    Suppose $P$ and $Q$ are perfect complexes in $DF(A)$ and suppose that $\eta\colon A\{0\}\to P\underline{\otimes}^L_AQ$ is a graded coevaluation, then it is a coevaluation in $(DF(A),\otimes^L)$ in the sense of symmetric monoidal categories.
\end{proposition}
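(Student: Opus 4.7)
The plan is to reduce the symmetric-monoidal coevaluation axioms for $\eta$ to the honest duality of $P$ with itself. Since $P$ is perfect, by \cref{dualisable characterisation} it has a dual $P^\vee := R\underline{\mathrm{Hom}}(P, A\{0\})$ equipped with tautological evaluation $\mathrm{ev}_P$ and coevaluation $\mathrm{coev}_P$ satisfying the zigzag identities. Via the tensor-Hom adjunction \eqref{global adjunction} and the canonical isomorphism $P \underline{\otimes}^L Q \simeq R\underline{\mathrm{Hom}}(P^\vee, Q)$ available for dualisable $P$, the given $\eta \colon A\{0\} \to P \underline{\otimes}^L Q$ corresponds to a morphism $\alpha \colon P^\vee \to Q$ in $DF(A)$ characterised by the factorisation $\eta = (\mathrm{id}_P \underline{\otimes}^L \alpha) \circ \mathrm{coev}_P$. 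Once I show that $\alpha$ is an isomorphism, the evaluation $\varepsilon := \mathrm{ev}_P \circ (\mathrm{id}_P \underline{\otimes}^L \alpha^{-1})$ fits with $\eta$ to satisfy the zigzag identities, because they can be transported along $\alpha$ from the known duality $(\mathrm{coev}_P, \mathrm{ev}_P)$.

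The substantive step is therefore to verify that $\alpha$ is an isomorphism in $DF(A)$, which I plan to check by passing to associated gradeds. Since a morphism in $DF(A)$ is an isomorphism exactly when its image under $\gr$ is an isomorphism in $D(A)$, it suffices to analyse $\gr(\alpha)$. The functor $\gr \colon DF(A) \to DG(A) \to D(A)$ is symmetric monoidal, and on perfect objects it commutes with the formation of duals, so one has a canonical identification of $\gr(P^\vee)$ with the graded dual $\bigoplus_i (\gr_i P)^\vee$ placed in appropriate filtration degrees. Under this identification, $\gr(\alpha)$ decomposes as a family of morphisms $(\gr_i P)^\vee \to \gr_{-i} Q$ which are precisely the adjoints of the components $\gr_0(\eta)_i$ in the defining diagram of a graded coevaluation. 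By hypothesis, each $\gr_0(\eta)_i$ is a genuine coevaluation between the perfect complexes $\gr_i P$ and $\gr_{-i} Q$; since the dual of a perfect complex in $D(A)$ is unique up to canonical isomorphism, this is equivalent to the adjoint being an isomorphism. Hence every component of $\gr(\alpha)$ is an isomorphism, and so is $\alpha$.

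The main obstacle I expect is not conceptual but bookkeeping: one must verify that the identification $\gr\bigl(R\underline{\mathrm{Hom}}(P, A\{0\})\bigr) \simeq \bigoplus_i (\gr_i P)^\vee$ pairs the index $i$ on one side with $-i$ on the other, matching the indexing convention built into the definition of a graded coevaluation. This compatibility is standard for perfect objects with uniformly bounded filtrations; it reduces by induction on the length of the filtration and additivity of the internal Hom to the case where $P$ is concentrated in a single filtration degree, where the internal Hom into $A\{0\}$ manifestly flips the filtration degree.
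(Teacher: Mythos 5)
Your proposal is correct and follows essentially the same route as the paper: use $\eta$ to build a comparison map between one factor and the dual of the other (you form $\alpha\colon P^\vee\to Q$ via adjunction, the paper forms $Q^\vee\to P$ by tensoring $\eta$ with $Q^\vee$ and evaluating), check it is an isomorphism on associated gradeds using the graded-coevaluation hypothesis together with conservativity of $\gr$, and then transport the canonical evaluation along that isomorphism. The difference is only which factor is dualised, so the two arguments are mirror images of one another.
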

\begin{proof}
    Given the map $$\eta\colon A\{0\}\to P\underline{\otimes}^L_AQ$$ note that both $P$ and $Q$ are dualisable, so in fact admit duals given by $P^\vee:=R\underline{\mathrm{Hom}}(P,A\{0\})$ and $Q^\vee:=R\underline{\mathrm{Hom}}(Q,A\{0\})$. Now tensoring the map $\eta$ with $Q^\vee$ we get a map $$Q^\vee\to P\underline{\otimes}^L Q\underline{\otimes}^L Q^\vee\to P.$$
Taking the $\gr_i$ we see that this map is an isomorphism (in fact the identity) on the graded (by the hypothesis that $\eta$ is a graded copairing) and so must be an isomorphism in $DF(A)$. So we conclude that $$P\simeq R\underline{\mathrm{Hom}}(Q,A\{0\})=Q^\vee.$$

Now we observe that there is a canonical evaluation map $$\varepsilon\colon Q\underline{\otimes}^L R\underline{\mathrm{Hom}}(Q,A\{0\})\to A\{0\}$$ and so identifying $Q^\vee$ with $P$ we get a pairing map $$\epsilon\colon P\underline{\otimes}^L Q\to A\{0\} $$ which, by construction, satisfies the duality axioms.
 
\end{proof}

\begin{theorem}
    \label{thm: copairing} The map $\eta$ is a graded coevaluation with respect to the Serre duality pairing, and thus a coevaluation in $(DF(A),\otimes^L)$ in the sense of symmetric monoidal categories.
\end{theorem}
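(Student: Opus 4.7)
The plan is to identify each graded component of $\eta$ explicitly as one of the Serre duality coevaluations $\tau_{\mathcal{E}}$ or $\tau_{\mathcal{F}}$ constructed in Section~\ref{sec: copairing locally free}, whose coevaluation property was established in Section~\ref{sec: compatibility}, and then invoke Proposition~\ref{lemma: copairing sym} to upgrade from a graded coevaluation to a coevaluation in the symmetric monoidal category $(DF(A), \underline{\otimes}^L)$.

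First I would spell out the stupid (Hodge) filtrations on $L$ and $M=L^*$. For $L = [\mathcal{E} \xrightarrow{D} \mathcal{F}]$ with $\mathcal{E}$ in degree $0$, take $F^1 L = \mathcal{F}[-1]$ and $F^0 L = L$, so that $\gr^0 L = \mathcal{E}$ and $\gr^1 L = \mathcal{F}[-1]$; dually, for $M = [\mathcal{F}^* \xrightarrow{D^*} \mathcal{E}^*]$ with $\mathcal{E}^*$ in degree $0$, take $F^0 M = \mathcal{E}^*$ and $F^{-1} M = M$, giving $\gr^{-1} M = \mathcal{F}^*[1]$ and $\gr^0 M = \mathcal{E}^*$. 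Using the filtered K\"unneth formula together with the fact that $Rf_*$ commutes with $\gr$ on $DF_b$ (both proven in Section~\ref{sec: kunneth}), the graded piece at total degree $0$ of $R\Gamma(X,L) \otimes^{\mathbf{L}}_A R\Gamma(X,M)[n]$ decomposes as the direct sum of the $(0,0)$-summand $R\Gamma(X,\mathcal{E}) \otimes^{\mathbf{L}}_A R\Gamma(X,\mathcal{E}^*)[n]$ and the $(1,-1)$-summand $R\Gamma(X,\mathcal{F}) \otimes^{\mathbf{L}}_A R\Gamma(X,\mathcal{F}^*)[n]$, with the shifts cancelling; no other graded degrees produce diagonal components since the filtrations on $L$ and $M$ are supported in $\{0,1\}$ and $\{-1,0\}$ respectively.

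Next I would trace the element $\tilde{\eta}(1) = \tau_{\mathcal{E}} + \tau_{\mathcal{F}}$ through the spectral sequence of Proposition~\ref{prop: hyperrep} and the filtered K\"unneth isomorphism. By construction, $\tau_{\mathcal{E}} \in H^n_\Delta(\mathcal{E} \boxtimes \mathcal{E}^*)$ lies in the summand arising from $\gr^0 L \boxtimes \gr^0 M$, while $\tau_{\mathcal{F}} \in H^n_\Delta(\mathcal{F} \boxtimes \mathcal{F}^*)$ lies in the summand arising from $\gr^1 L \boxtimes \gr^{-1} M$. On each summand the induced component of $\gr^0 \eta$ is, by unravelling the construction of Section~\ref{sec: two-term}, precisely the copairing built in Section~\ref{sec: copairing locally free} for a single locally free sheaf; by the main theorem of Section~\ref{sec: compatibility}, each of these is the Serre duality coevaluation for the corresponding sheaf. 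Since the cohomologies $R\Gamma(X, \mathcal{E})$, $R\Gamma(X, \mathcal{F})$, and their duals are perfect over $A$ by smoothness and properness of $X \to S$, the filtered complexes $R\Gamma(X,L)$ and $R\Gamma(X,M)$ are perfect in the sense of Definition~\ref{definition of perfect complexes}, and Proposition~\ref{lemma: copairing sym} then finishes the argument.

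The main obstacle will be verifying cleanly that, on passage to associated gradeds, $\tilde{\eta}(1)$ really decomposes into the two isolated pieces $\tau_{\mathcal{E}}$ and $\tau_{\mathcal{F}}$ with no cross terms mixing filtration degrees. This amounts to checking compatibility between the Hodge filtration on $L \boxtimes M$, the stratification of local cohomology used in Proposition~\ref{prop: hyperrep}, and the product filtration on $R\Gamma(X,L) \otimes^{\mathbf{L}}_A R\Gamma(X,M)[n]$ delivered by the filtered K\"unneth formula; once these three filtrations are shown to line up, the reduction to the single-sheaf computations of Sections~\ref{sec: copairing locally free} and~\ref{sec: compatibility} is purely formal.
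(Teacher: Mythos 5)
Your proposal is correct and follows essentially the same route as the paper: identify the graded pieces of the Hodge filtrations on $L$ and $L^*$, observe that $\gr_0\eta$ splits into the $(\mathcal{E},\mathcal{E}^*)$ and $(\mathcal{F},\mathcal{F}^*)$ summands which are precisely $\tau_{\mathcal{E}}$ and $\tau_{\mathcal{F}}$ (hence Serre duality coevaluations by the compatibility result of Section~\ref{sec: compatibility}), and then apply Proposition~\ref{lemma: copairing sym}. If anything you are more explicit than the paper about the absence of cross terms in the graded decomposition and about the perfectness hypotheses needed to invoke that proposition; the paper asserts these identifications directly.
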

\begin{proof} 
Note that the functor $\mathrm{gr}$ is a symmetric monoidal functor, in particular it commutes with $\otimes^L$. 
Moreover $\mathrm{gr}_0R\Gamma(X,L)=R\Gamma(X,\E)$, $\mathrm{gr}_0R\Gamma(X,L)=R\Gamma(X,\E^*)$ and $\mathrm{gr}_{1}R\Gamma(X,L)=R\Gamma(X,\F)[-1]$, $\mathrm{gr}_{-1}R\Gamma(X,L)=R\Gamma(X,\F^*)[1]$.
Thus taking $\mathrm{gr}_0$ of the map $\eta$ we get the following morphism: $$A\xrightarrow{\gr_0{\eta}} R\Gamma(X,\E)\otimes_A^L R\Gamma(X,\E^*)\oplus R\Gamma(X,\F)\otimes_A^L R\Gamma(X,\F^*).$$ 
Projecting to the individual summands we obtain the maps $$\gr_0(\eta)_{\E} \colon A\to R\Gamma(X,\E)\otimes_A^L R\Gamma(X,\E^*)$$ and $$\gr_0(\eta)_{\F}\colon A\to R\Gamma(X,\F)\otimes_A^L R\Gamma(X,\F^*).$$

Since we can identify $\gr_0(\eta)_{\E}$ with $\tau_{\E}$ and similarly for $\F$, we see that this map is a filtered copairing compatible with Serre duality.  Finally, by Lemma \ref{lemma: copairing sym}, $\eta$ is a coevaluation in $(DF(A),\otimes^L)$ in the sense of symmetric monoidal categories. 
\end{proof}

\section{Duality of differential operators}
\label{sec: duality diff}
Our goal in this section is to use Theorem \ref{thm: copairing} to prove duality between differentials arising from the long exact sequences arising naturally from the objects in the context of that theorem.  
In particular, when $L = \mc{E}\rightarrow \mc{F}$ is taken to be the two-term complex $\O_X\xrightarrow{d}\Omega^1_{X/S}$, we obtain the desired duality between the two differentials in the Hodge-de Rham spectral sequence.  

\subsection{Dual distinguished triangles}

We begin with a lemma about dual distinguished triangles.

\begin{lemma}\label{dualtriangle}
    Let $P$ be a filtered perfect complex. Let $P^\vee:=R\underline{\mathrm{Hom}}(P,A)$ be the dual. Then the triangles $$\gr_{i+1}P\to \fil_i/\fil_{i+2}P\to \gr_{i}P$$ and 
$$\gr_{-i}P^\vee\to \fil_{-i-1}/\fil_{-i+1}P^\vee\to \gr_{-i-1}P^\vee$$ are dual in the category $D(A).$

\end{lemma}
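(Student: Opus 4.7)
The plan is to recognise that both distinguished triangles arise from two-step subquotients of the filtration, and then show that applying $R\underline{\mathrm{Hom}}(-,A\{0\})$ exchanges them. Concretely, the first triangle is the one associated to the short exact sequence of filtered objects
\[
0 \to \fil_{i+1}P/\fil_{i+2}P \to \fil_i P/\fil_{i+2}P \to \fil_i P/\fil_{i+1}P \to 0,
\]
so after identifying the outer terms with $\gr_{i+1}P$ and $\gr_i P$ respectively we obtain the claimed triangle in $D(A)$. The second triangle arises in the analogous way from $P^\vee$ with indices shifted: it corresponds to $0\to \fil_{-i}P^\vee/\fil_{-i+1}P^\vee \to \fil_{-i-1}P^\vee/\fil_{-i+1}P^\vee \to \fil_{-i-1}P^\vee/\fil_{-i}P^\vee \to 0$.

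The key input is that because $P$ is filtered perfect, $R\underline{\mathrm{Hom}}(-,A\{0\})$ is an exact (contravariant) functor of triangulated categories on the full subcategory of perfect objects, so applying it to the first distinguished triangle produces a distinguished triangle in $DF(A)$ (and hence, after forgetting the filtration, in $D(A)$). I will then match the dualised triangle term-by-term with the second triangle. For the outer terms, I will use the identification $\gr_j(P^\vee)\cong (\gr_{-j}P)^\vee$, which follows from Proposition \ref{dualisable characterisation} together with the convention that the dual filtration on $P^\vee$ is the one for which $\fil_j P^\vee$ annihilates $\fil_{-j+1}P$. This yields $(\gr_i P)^\vee \simeq \gr_{-i}P^\vee$ and $(\gr_{i+1}P)^\vee \simeq \gr_{-i-1}P^\vee$, placing these terms at the correct (reversed) positions in the triangle.

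For the middle term I will observe that $\fil_i P/\fil_{i+2}P$ is a perfect filtered complex with only two nontrivial graded pieces, in filtration degrees $i$ and $i+1$. Under $R\underline{\mathrm{Hom}}(-,A\{0\})$ this becomes a perfect complex whose only nontrivial graded pieces lie in degrees $-i$ and $-i-1$, which is precisely the description of $\fil_{-i-1}P^\vee/\fil_{-i+1}P^\vee$. One verifies the identification directly on the level of $\fil$-subobjects by taking $R\underline{\mathrm{Hom}}(-,A\{0\})$ of the short exact sequence $0 \to \fil_{i+2}P \to \fil_i P \to \fil_i P/\fil_{i+2}P \to 0$ and comparing with the corresponding sequence for $P^\vee$ under the duality between $\fil$ and $\fil$ obtained from the convention above.

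The main obstacle I anticipate is purely bookkeeping: carefully pinning down the conventions for the dual filtration on $P^\vee$ so that the shifts $i \leftrightarrow -i-1$ appear as stated, and ensuring that the natural maps produced by dualising the first triangle coincide (up to sign) with the connecting maps of the second triangle. Once the indexing convention is fixed via the identification $\gr_j(P^\vee)\cong (\gr_{-j}P)^\vee$, the rest of the argument is formal from the fact that $R\underline{\mathrm{Hom}}(-,A\{0\})$ is a triangulated functor on filtered perfect complexes.
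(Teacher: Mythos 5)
Your proposal is correct and follows essentially the same route as the paper: dualise the first triangle using that $R\underline{\mathrm{Hom}}(-,A\{0\})$ is triangulated, then identify the terms via $\gr_j(P^\vee)\cong(\gr_{-j}P)^\vee$ and the corresponding formula for the two-step subquotients. The only difference is that you spell out the verification of the middle-term identification in more detail than the paper, which simply asserts the formula $\fil_{i}/\fil_{i+2}P^\vee=R\mathrm{Hom}(\fil_{-i-1}/\fil_{-i+1}P,A)$.
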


\begin{proof}
 We need to check that apply $R\mathrm{Hom}(-,A)$ to the triangle $$\gr_{i+1}P\to \fil_i/\fil_{i+2}P\to \gr_{i}P$$ gives us the triangle $$\gr_{-i}P^\vee\to \fil_{-i-1}/\fil_{-i+1}P^\vee\to \gr_{-i-1}P^\vee.$$

This follows from the observation that $R\mathrm{Hom}(-,A)$ preserves triangles and the formulas $$\gr_iP^\vee=\gr_iR\underline{\mathrm{Hom}}(P,A\{0\})=R\mathrm{Hom}(\gr_{-i}P,A)$$ and $$\fil_{i}/\fil_{i+2}P^\vee=\fil_{i}/\fil_{i+2}R\underline{\mathrm{Hom}}(P,A\{0\})=R\mathrm{Hom}(\fil_{-i-1}/\fil_{-i+1}P,A).$$ 
\end{proof}

In the situation of the Lemma \ref{dualtriangle}, assume that $P$ has a two step filtration increasing filtration in filtered degrees $0$ and $1$. Then setting $i=0$ we get exact triangles $$\gr_1P\to P\to \gr_0P\xrightarrow{d_P} \gr_1P[1]$$ and the dual triangle 
$$\gr_{-1}P^\vee[-1]\xrightarrow{d_{P^\vee}}\gr_0P^\vee\to P^\vee\to \gr_{-1}P^\vee.$$

We will apply this to the case where $P$ is set to be $R\Gamma(X, \mc{E} \overset{D}\to \mc{F})$ with $\E$ in degree 0; in particular with $\mc{E}=\O_X$ and $\mc{F} = \Omega^1_{X/S}$.   In light of Theorem \ref{thm: copairing}, in this scenario we may identify $P^{\vee}$ with $R\Gamma(X, \mc{F}^* \overset{D^*}\to \mc{E}^*)[n]$ with $\E^*$ where $\mc{E}^*$ is in degree $0$.  Thus we obtain the dual distinguished triangles (after a shift) 

\[
R\Gamma(X, \mc{E}) \rightarrow R\Gamma(X, \mc{F})\rightarrow R\Gamma(X, L)\xrightarrow{+1} 
\]
and 
\[
 R\Gamma(X, \mc{F}^*)[n]\rightarrow R\Gamma(X, \mc{E}^*)[n]\rightarrow R\Gamma(X, L^*)[n] \xrightarrow{+1}. 
\]

\subsection{Proof of Lemma \ref{lemma: key}} 
The long exact sequence associated to the dual triangles above give differentials 
\begin{align*}
H^0(d^0)\colon H^0(X, \mc{E})\to H^0(X, \mc{F}), \qquad H^n(d^n)\colon H^{n}(X, \mc{F}^*)\to H^n(X, \mc{E}^*).
\end{align*} 

Since the triangles are dual, we would like to conclude that $H^0(d^0)$ is the dual of $H^n(d^n)$. 

\begin{lemma}
\label{lemma: tor}
    Assume $f\colon X\rightarrow S$ is a smooth and proper morphism of schemes with relative dimension $n$.  Then for any vector bundle $\E$ on $X$, the pushforward $R\Gamma(X, \E)$ has tor amplitude in $[0, n]$.
\end{lemma}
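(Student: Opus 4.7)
My plan is to apply the derived projection formula, use flatness of $\E$ to underive the tensor product, and then invoke the standard higher direct image vanishing theorem. Throughout I identify $R\Gamma(X,\E)$ with $Rf_*\E \in D(A)$, which is legitimate because in the context where this lemma is applied we have $S = \Spec A$ affine.

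To show that $Rf_*\E$ has tor amplitude in $[0,n]$, I need to verify that for every $A$-module $M$, the complex $Rf_*\E \otimes^{\mathbb{L}}_A M$ has cohomology concentrated in degrees $[0,n]$. First, I would apply the derived projection formula to rewrite
\[
Rf_*\E \otimes^{\mathbb{L}}_A M \;\cong\; Rf_*\bigl(\E \otimes^{\mathbb{L}}_{\O_X} Lf^*M\bigr).
\]
Since $f\colon X\to S$ is smooth, it is in particular flat, so $f^*$ is exact and $Lf^*M = f^*M$ as a genuine quasi-coherent $\O_X$-module. Moreover, $\E$ is a vector bundle, hence locally free and so flat over $\O_X$, which lets me replace the derived tensor product with the underived one:
\[
\E \otimes^{\mathbb{L}}_{\O_X} f^*M \;=\; \E \otimes_{\O_X} f^*M.
\]
Consequently, $Rf_*\E \otimes^{\mathbb{L}}_A M \cong Rf_*(\E \otimes_{\O_X} f^*M)$, which is the derived pushforward of a single quasi-coherent sheaf sitting in degree zero.

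The lower bound on the tor amplitude is then immediate, since $R^i f_* = 0$ for $i<0$. For the upper bound, the key input is the vanishing $R^if_*\F = 0$ for $i > n$ and any quasi-coherent sheaf $\F$ on $X$, valid for a proper morphism whose fibers are of dimension at most $n$ (see, e.g., \cite[\href{https://stacks.math.columbia.edu/tag/02V7}{Tag 02V7}]{stacks-project} or the analogue for quasi-coherent sheaves on Noetherian proper schemes). Applying this to $\F = \E \otimes_{\O_X} f^*M$ gives $R^if_*(\E\otimes_{\O_X}f^*M) = 0$ for $i>n$, and combining with the lower bound completes the proof.

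I do not anticipate any serious obstacle here: the argument is a textbook combination of the projection formula, flatness, and Grothendieck's vanishing theorem. The only point to watch is to confirm that the projection formula is applicable in the required generality (it is, since $\E$ is a perfect, in fact locally free, $\O_X$-module and $f$ is quasi-compact and quasi-separated), and that the vanishing theorem holds for arbitrary quasi-coherent sheaves rather than just coherent ones, for which one reduces to the coherent case by writing an arbitrary quasi-coherent sheaf as a filtered colimit of coherent subsheaves and using that higher direct images commute with filtered colimits under our hypotheses.
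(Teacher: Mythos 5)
Your argument is correct in outline, but it takes a genuinely different route from the paper (and, as it happens, coincides with an approach the authors ultimately abandoned). The paper's proof uses the criterion that a pseudo-coherent complex over $A$ has tor amplitude in $[0,n]$ as soon as $H^i(R\Gamma(X,\E)\otimes^{\mb{L}}_A k(s))=0$ for $i\notin[0,n]$ at every point $s\in S$; by tor-independent base change this becomes $H^i(X_s,\E_s)$ on the $n$-dimensional fibre, where Grothendieck's vanishing theorem applies with no further hypotheses. You instead test against an arbitrary $A$-module $M$, use the projection formula to rewrite $Rf_*\E\otimes^{\mb{L}}_A M\cong Rf_*(\E\otimes_{\O_X}f^*M)$, and then need the relative vanishing $R^if_*\G=0$ for $i>n$ for every quasi-coherent $\G$. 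Two cautions about your version. First, the tag you cite (02V7) is not this vanishing statement, so the key input is currently uncited. Second, and more substantively, that input is stronger than fibrewise Grothendieck vanishing: since $S$ is not assumed Noetherian and $f$ is proper but not necessarily projective, the bound $R^{>n}f_*\G=0$ requires Noetherian approximation of the morphism $f$ itself (spreading out so the fibre-dimension bound persists), not merely writing $\G$ as a filtered colimit of finitely presented subsheaves, followed by the coherent Noetherian case via the theorem on formal functions or Chow's lemma d\'evissage. This is all available in the literature, but it is the heaviest step of your argument and the reason the paper's route is preferable: it trades that machinery for the (also standard) facts that $R\Gamma(X,\E)$ is pseudo-coherent and that tor amplitude of pseudo-coherent complexes can be checked on residue fields, after which only vanishing on a single finite-dimensional fibre is needed.
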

\begin{proof}
    By \cite[Proposition 23.146]{GortzWedhorn}, it suffices to show that for any $s\in S$ with residue field $k(s)$, we have $H^i(R\Gamma(X, \E)\otimes^L k(s)) = 0$ for all $i \notin [0,n]$. By tor-independent base-change \cite[\href{https://stacks.math.columbia.edu/tag/08IB}{Tag 08IB}]{stacks-project}, it follows that \[R\Gamma(X, \E) \otimes^L k(s) = R\Gamma(X_s, \E \otimes^L k(s)), \] where $X_s$ is the ($n$-dimensional) fibre over $s$. But $\E$ is locally free, hence $\E \otimes^L k(s) = \E|_s$. So in conclusion \[H^i(R\Gamma(X, \E)\otimes^L k(s)) = H^i(X_s, \E_s) = 0 \] 
    by Grothendieck's vanishing theorem.
    
\end{proof}

\begin{proposition}
\label{prop: dual noetherian}
    Assume $X\rightarrow S$ is smooth and proper with relative dimension $n$ and that $S=\Spec A$ is an affine scheme.  
    Then for any two-term complex of differential operators between vector bundles $L = \mc{E}\xrightarrow{d} \mc{F}$, the differentials
    $H^0(d^0) \colon H^0(X, \mc{E})\to H^0(X, \mc{F})$ is the dual of $H^n(d^n)\colon H^{n}(X, \mc{F}^*)\to H^n(X, \mc{E}^*)$.
\end{proposition}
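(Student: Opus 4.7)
The strategy is to apply the filtered coevaluation of Theorem~\ref{thm: copairing} to $L = [\mc{E} \xrightarrow{d} \mc{F}]$, pass to dual distinguished triangles via Lemma~\ref{dualtriangle}, and then descend to cohomology using the tor-amplitude bound of Lemma~\ref{lemma: tor}.

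First, view $L$ with its two-step Hodge filtration so that $\gr^0 R\Gamma(X, L) \simeq R\Gamma(X, \mc{E})$ and $\gr^1 R\Gamma(X, L) \simeq R\Gamma(X, \mc{F})[-1]$, and dually $\gr^0 R\Gamma(X, L^*)[n] \simeq R\Gamma(X, \mc{F}^*)[n]$ and $\gr^{-1} R\Gamma(X, L^*)[n] \simeq R\Gamma(X, \mc{E}^*)[n+1]$. Theorem~\ref{thm: copairing} then asserts that the copairing $\eta$ is a coevaluation exhibiting $R\Gamma(X, L^*)[n]$ as the filtered $A$-linear dual of $R\Gamma(X, L)$ in $DF(A)$, with the identifications on graded pieces supplied by ordinary Serre duality. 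Applying Lemma~\ref{dualtriangle} with $i = 0$ then produces the dual distinguished triangles
\[ R\Gamma(X, \mc{E}) \xrightarrow{\delta} R\Gamma(X, \mc{F}) \longrightarrow R\Gamma(X, L) \xrightarrow{+1} \]
and
\[ R\Gamma(X, \mc{F}^*)[n] \xrightarrow{\delta^\vee} R\Gamma(X, \mc{E}^*)[n] \longrightarrow R\Gamma(X, L^*)[n] \xrightarrow{+1} \]
in $D(A)$, where $\delta$ is induced by $d$ and $\delta^\vee$ is its $D(A)$-dual under $R\Hom_A(-, A)$ combined with the Serre duality identifications $R\Gamma(X, \mc{E})^\vee \simeq R\Gamma(X, \mc{E}^*)[n]$ and $R\Gamma(X, \mc{F})^\vee \simeq R\Gamma(X, \mc{F}^*)[n]$.

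Now take cohomology. By Lemma~\ref{lemma: tor}, $R\Gamma(X, \mc{E})$ and $R\Gamma(X, \mc{F})$ live in cohomological degrees $[0, n]$, so $H^0(\delta)$ is the module map $H^0(d^0) \colon H^0(X, \mc{E}) \to H^0(X, \mc{F})$ and $H^0(\delta^\vee)$ is the module map $H^n(d^n) \colon H^n(X, \mc{F}^*) \to H^n(X, \mc{E}^*)$. The identity $\delta^\vee = R\Hom_A(\delta, A)$ is witnessed by commutativity of
\[
\begin{tikzcd}[column sep = large]
R\Gamma(X, \mc{E}) \otimes^{\mb{L}}_A R\Gamma(X, \mc{F}^*)[n] \arrow[r, "\delta \otimes 1"] \arrow[d, "1 \otimes \delta^\vee"'] & R\Gamma(X, \mc{F}) \otimes^{\mb{L}}_A R\Gamma(X, \mc{F}^*)[n] \arrow[d, "\epsilon_{\mc{F}}"] \\
R\Gamma(X, \mc{E}) \otimes^{\mb{L}}_A R\Gamma(X, \mc{E}^*)[n] \arrow[r, "\epsilon_{\mc{E}}"'] & A
\end{tikzcd}
\]
in $D(A)$, where $\epsilon_{\mc{E}}, \epsilon_{\mc{F}}$ are the Serre duality evaluation pairings. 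Passing to $H^0$ and pre-composing with the natural K\"unneth map $H^0(X, \mc{E}) \otimes_A H^n(X, \mc{F}^*) \to H^0(R\Gamma(X, \mc{E}) \otimes^{\mb{L}}_A R\Gamma(X, \mc{F}^*)[n])$ gives the adjunction $\langle H^0(d^0)\alpha, \beta \rangle_{\mc{F}} = \langle \alpha, H^n(d^n)\beta \rangle_{\mc{E}}$ under the Serre duality pairings, which is precisely the desired duality of the two maps.

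The principal obstacle is bookkeeping: I must verify that the connecting morphism $\delta$ produced by Lemma~\ref{dualtriangle} really is induced by $d$ on $H^0$, and that the filtered Serre duality of Theorem~\ref{thm: copairing} restricts to the classical Serre duality on each graded piece. Both reduce to unwinding the explicit construction of $\eta$ in Sections~\ref{sec: copairing locally free} and~\ref{sec: two-term} together with the compatibility verified in Section~\ref{sec: compatibility}.
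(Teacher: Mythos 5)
Your architecture is the paper's: feed $L$ into Theorem \ref{thm: copairing}, extract the dual distinguished triangles via Lemma \ref{dualtriangle}, and pass to cohomology using Lemma \ref{lemma: tor}. Up to the last step the two arguments agree. The gap is in that last step. The commutative square with $\epsilon_{\mc E}$ and $\epsilon_{\mc F}$ does give the adjunction identity $\langle H^0(d^0)\alpha,\beta\rangle_{\mc F}=\langle\alpha,H^n(d^n)\beta\rangle_{\mc E}$, but adjointness for the induced pairings on cohomology is ``precisely the desired duality'' only if those pairings are perfect, i.e.\ only if the derived Serre duality $R\Gamma(X,\mc E)\simeq R\Hom(R\Gamma(X,\mc E^*),A)[-n]$ induces an isomorphism $H^0(X,\mc E)\cong \Hom_A(H^n(X,\mc E^*),A)$ after taking $H^0$. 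Over a general affine base this is not automatic: for an arbitrary complex $C$, $H^{-n}(R\Hom(C,A))=\Ext^{-n}(C,A)$ need not coincide with $\Hom_A(H^n(C),A)$. Without this identification the adjunction formula does not literalize to ``$H^0(d^0)$ is the dual of $H^n(d^n)$,'' and in the application to Corollary \ref{lemma: key noetherian} the vanishing of $d^0$ would not force the vanishing of $d^n$.

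This identification is exactly where the paper does its real work with Lemma \ref{lemma: tor}: the tor-amplitude bound gives a projective resolution $0\to P^0\to\cdots\to P^n\to 0$ of $R\Gamma(X,\mc E^*)$, the right-exact sequence $P^{n-1}\to P^n\to H^n(X,\mc E^*)\to 0$ together with left-exactness of $\Hom_A(-,A)$ yields
\[
\Ext^{-n}(R\Gamma(X,\mc E^*),A)=\ker\bigl(\Hom(P^n,A)\to\Hom(P^{n-1},A)\bigr)=\Hom_A(H^n(X,\mc E^*),A),
\]
functorially, and only then can $H^0$ of the dual map be identified with $\Hom_A(H^n(d^n),A)$. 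You invoke Lemma \ref{lemma: tor} only to pin down $H^0(\delta)$ and $H^0(\delta^\vee)$, which does not require it; its substantive use is the computation above. The fix is short, but it is a missing step rather than the bookkeeping you describe.
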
 

\begin{proof}
We have the following commutative diagram:
		\begin{equation*}
			\begin{tikzcd}[row sep = huge]
				R\Gamma(X, \mathcal{E}) \arrow[r, "\sim"] \arrow[d, "R\Gamma(d)"]& R\Hom(R\Gamma(X, \mc{E}^*), A)[-n]\arrow[d, "R\Gamma(d)'"]\\
				R\Gamma(X, \mc{F}) \arrow[r, "\sim"]\arrow[d] & R\Hom(R\Gamma(X, \mc{F}^*), A)[-n]\arrow[d]\\
				R\Gamma(X, L) \arrow[r, "\sim"] & R\Hom(R\Gamma(X, L^*), A)[-n],
			\end{tikzcd}
		\end{equation*}
		where the top two rightward quasi-isomorphisms are given by Serre duality, and the columns are both triangles. Taking $H^0$, we find: 
		\begin{equation*}
			\begin{tikzcd}[row sep = huge]
				H^0(X, \mathcal{E}) \arrow[r, "\sim"] \arrow[d, "H^0(d^0)"]& \Ext^{-n}(R\Gamma(X, \mc{E}^*), A)\arrow[d, "d'"]\\
				H^0(X, \mc{F}) \arrow[r, "\sim"]\arrow[d] & \Ext^{-n}(R\Gamma(X, \mc{F}^*), A)\arrow[d]\\
				H^0(R\Gamma(X, L)) \arrow[r, "\sim"] & \Ext^{-n}(R\Gamma(X, L^*), A),
			\end{tikzcd}
		\end{equation*}
		We wish to identify $d'$ with \[\Hom(H^n(X, \mc{E}^*), A) \xrightarrow{H^n(d^n)^*} \Hom(H^n(X, \mc{F}^*), A). \]  
		
		Since $X\rightarrow S$ is smooth and proper of dimension $n$, by Lemma \ref{lemma: tor}, $R\Gamma(X, \E)$ has tor amplitude in $[0, n]$. 
  Then by \cite[\href{https://stacks.math.columbia.edu/tag/0654}{Tag 0654}]{stacks-project} 
  we have a projective resolution of $R\Gamma(\E)$  \[R\Gamma(\E) \xrightarrow{\sim} [0\rightarrow P^0 \rightarrow...\rightarrow P^{n-1}\rightarrow P^n\rightarrow 0].  \] 
  Note that we have the exact sequence: 
		\begin{equation}\label{equation:right-exact-sequence}
			P^{n-1}\rightarrow P^n \rightarrow H^n(X, \E)\rightarrow 0.
		\end{equation}
		Now we compute $R\Hom(R\Gamma(\E), A)[-n]$:
		\[R\Hom(R\Gamma(\E), A)[-n] = \Hom(P^\bullet, A)[-n] = [0 \rightarrow \Hom(P^n, A) \rightarrow \Hom(P^{n-1}, A) \rightarrow...]. \] Thus $\Ext^{-n}(R\Gamma(\E), A) = \ker (\Hom(P^n, A) \rightarrow \Hom(P^{n-1}, A))$. 
  But by (\ref{equation:right-exact-sequence}) and left-exactness of $\Hom$, it follows \[\ker (\Hom(P^n, A) \rightarrow \Hom(P^{n-1}, A)) = \Hom(H^n(X, \E), A). \] 
  Applying this to the cases $\E = \Omega^{n-1}_{X/S}$ and $\E = \Omega^n_{X/S}$, we find that 
  \[
  \Ext^{-n}(R\Gamma(X, \E^*), A) =\Hom(H^n(X, \E^*), A) 
  \] 
  and 
  \[
  \Ext^{-n}(R\Gamma(X, \mc{F}^*), A) =\Hom(H^n(X, \mc{F}^*), A). 
  \] A simple diagram chase shows that the maps $d'$ and $H^n(d^n)^*$ are the same. 
    
\end{proof}

\begin{corollary}
(Alternative proof of \cite[\href{https://stacks.math.columbia.edu/tag/0G8J}{Tag 0G8J}]{stacks-project})
\label{lemma: key noetherian}
    Let $S$ be a quasi-compact and quasi-separated scheme and let $f\colon X\rightarrow S$ be a proper smooth morphism of schemes all of whose fibres are nonempty and equidimensional of dimension $n$.  Then the map $d^n\colon R^nf_*\Omega^{n-1}_{X/S}\rightarrow R^nf_*\Omega^n_{X/S}$ is zero.
\end{corollary}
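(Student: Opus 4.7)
The plan is to apply Proposition \ref{prop: dual noetherian} to the two-term complex $L = [\mathcal{O}_X \xrightarrow{d^0} \Omega^1_{X/S}]$, and then exploit the known vanishing of $d^0$ on pushforwards to deduce the vanishing of the dual differential $d^{n-1}$.

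First, I would reduce to the case $S = \Spec A$ affine. Since vanishing of a morphism of quasi-coherent sheaves is local on the target, and since the formation of $R^n f_* \Omega^k_{X/S}$ together with the differential induced by the de Rham differential is compatible with restriction to open affines of $S$, it suffices to prove the statement after pulling back to an affine open cover of $S$. This places us in the setting of Proposition \ref{prop: dual noetherian}.

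Setting $\mathcal{E} = \mathcal{O}_X$ and $\mathcal{F} = \Omega^1_{X/S}$, the definitions from Section \ref{sec: dual diff} give canonical isomorphisms $\mathcal{E}^* \cong \Omega^n_{X/S}$ and $\mathcal{F}^* \cong \Omega^{n-1}_{X/S}$, while the first lemma of Section \ref{sec: dual diff} identifies the dual differential as $(d^0)^* = -d^{n-1}$. Proposition \ref{prop: dual noetherian} then tells us that $H^0(d^0) \colon H^0(X, \mathcal{O}_X) \to H^0(X, \Omega^1_{X/S})$ is Serre-dual to $-H^n(d^{n-1}) \colon H^n(X, \Omega^{n-1}_{X/S}) \to H^n(X, \Omega^n_{X/S})$. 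Since \cite[\href{https://stacks.math.columbia.edu/tag/0G8H}{Tag 0G8H}]{stacks-project} asserts that $d^0 \colon f_* \mathcal{O}_X \to f_* \Omega^1_{X/S}$ vanishes, i.e.\ $H^0(d^0) = 0$, the Serre-dual map $H^n(d^{n-1})$ must vanish as well, which is the desired conclusion.

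By this point essentially all the machinery has been set up, so there is no real obstacle — the proof amounts to plugging a well-chosen two-term complex into Proposition \ref{prop: dual noetherian} and reading off the duality. The only point requiring minor care is the reduction to the affine base, which is routine given that $f$ is proper and smooth; it ensures both that the hypotheses of the proposition are satisfied and that a vanishing statement proved over each affine open of $S$ globalises.
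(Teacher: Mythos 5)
Your proposal follows the same route as the paper's own proof: localise to $S=\Spec A$, feed the two-term complex $\mathcal{O}_X\xrightarrow{d^0}\Omega^1_{X/S}$ into Proposition \ref{prop: dual noetherian}, and invoke the vanishing of $d^0\colon f_*\mathcal{O}_X\to f_*\Omega^1_{X/S}$. There is, however, one step you pass over too quickly. The conclusion of Proposition \ref{prop: dual noetherian} is one-directional: it identifies $H^0(d^0)$ with the $A$-linear dual $\Hom(H^n(d^n),A)\to\Hom(H^n(X,\Omega^{n-1}_{X/S}),A)$ of the top differential, not the top differential with the dual of $H^0(d^0)$. Knowing that the dual of a map of $A$-modules vanishes does not in general force the map itself to vanish --- the identity of $\mathbb{Z}/2$ has zero dual over $\mathbb{Z}$ --- so the phrase ``the Serre-dual map must vanish as well'' is exactly the point that needs justification, and it is where the non-reflexivity of general $A$-modules could bite. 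The paper closes this gap by observing that $H^n(X,\Omega^n_{X/S})$ is free over $A$ (by relative duality it is the dual of the finite \'etale algebra $f_*\mathcal{O}_X$), so linear functionals separate its points, and a map into it all of whose composites with functionals vanish must itself vanish. With that one observation added, your argument is complete and coincides with the paper's.
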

\begin{proof}
    The statement is local on $S$; therefore it suffices to take $S=\Spec A$ affine. 
 Applying Proposition \ref{prop: dual noetherian} to $L=\O_X\rightarrow \Omega^1_{X/S}$, we may identify $(d^n)^*\colon \Hom(H^n(X, \Omega^n_{X/S}), A) \rightarrow \Hom(H^n(X, \Omega^{n-1}_{X/S}), A)$ with $d^0\colon f_*\O_X\rightarrow f_*\Omega^1_{X/S}$, which by \cite[\href{https://stacks.math.columbia.edu/tag/0G8F}{Tag 0G8H}]{stacks-project} is 0.  Since $H^n(X, \Omega^n_{X/S})$ is free over $A$, this implies that $d^n$ itself is 0, as desired.
\end{proof}

\bibliographystyle{alpha}
\bibliography{main}

@article{EGA4,
     author = {Grothendieck, Alexander},
     title = {\'El\'ements de g\'eom\'etrie alg\'ebrique : {IV.} {\'Etude} locale des sch\'emas et des morphismes de sch\'emas, {Quatri\`eme} partie},
     journal = {Publications Math\'ematiques de l'IH\'ES},
     pages = {5--361},
     publisher = {Institut des Hautes \'Etudes Scientifiques},
     volume = {32},
     year = {1967},
     zbl = {0153.22301},
     language = {fr},
     url = {http://archive.numdam.org/item/PMIHES_1967__32__5_0/}
}

@misc{blog,
    author = "{de Jong, Johan}",
    title = {Dual differential operator},
    howpublished = {\url{https://www.math.columbia.edu/~dejong/wordpress/?p=4680}},
    note = {Online; accessed 2 July 2024} ,
    year=2021,
}

@book {Hotta,
    AUTHOR = {Hotta, Ryoshi and Takeuchi, Kiyoshi and Tanisaki, Toshiyuki},
     TITLE = {{$D$}-modules, perverse sheaves, and representation theory},
    SERIES = {Progress in Mathematics},
    VOLUME = {236},
   EDITION = {Japanese},
 PUBLISHER = {Birkh\"{a}user Boston, Inc., Boston, MA},
      YEAR = {2008},
     PAGES = {xii+407},
      ISBN = {978-0-8176-4363-8},
   MRCLASS = {32C38 (14F05 14F10 17B10)},
  MRNUMBER = {2357361},
MRREVIEWER = {Corrado\ Marastoni},
       DOI = {10.1007/978-0-8176-4523-6},
       URL = {https://doi.org/10.1007/978-0-8176-4523-6},
}

@article {Hartshorne,
    AUTHOR = {Hartshorne, Robin},
     TITLE = {On the {D}e {R}ham cohomology of algebraic varieties},
   JOURNAL = {Inst. Hautes \'Etudes Sci. Publ. Math.},
  FJOURNAL = {Institut des Hautes \'Etudes Scientifiques. Publications
              Math\'ematiques},
    NUMBER = {45},
      YEAR = {1975},
     PAGES = {5--99},
      ISSN = {0073-8301,1618-1913},
   MRCLASS = {14F20},
  MRNUMBER = {432647},
MRREVIEWER = {D.\ Lieberman},
       URL = {http://www.numdam.org/item?id=PMIHES_1975__45__5_0},
}

@book{Illusie,
author="Illusie, Luc",
title="Classes De Chern Des Complexes Parfaits",
bookTitle="Complexe Cotangent et D{\'e}formations I",
year="1971",
publisher="Springer Berlin Heidelberg",
address="Berlin, Heidelberg",
pages="263--342",
isbn="978-3-540-37001-7",
doi="10.1007/BFb0059057",
url="https://doi.org/10.1007/BFb0059057"
}

@article {MR3581172,
    AUTHOR = {Schapira, Pierre and Schneiders, Jean-Pierre},
     TITLE = {Derived categories of filtered objects},
   JOURNAL = {Ast\'{e}risque},
  FJOURNAL = {Ast\'{e}risque},
    NUMBER = {383},
      YEAR = {2016},
     PAGES = {103--120},
      ISSN = {0303-1179},
      ISBN = {978-2-85629-844-2},
   MRCLASS = {18D35 (16E35 16W70 18A25 18D10)},
  MRNUMBER = {3581172},
MRREVIEWER = {Jorge Vit\'{o}ria},
}

@misc{clausen, title={Lecture 16: Poincaré duality}, url={https://sites.google.com/view/algebraicderham/home}, journal={Algebraic de Rham cohomology}, author={Clausen, Dustin}, 
note = {Online; accessed 18 September 2024},
year={2021}, month={Jun}}

@misc{stacks-project,
  author       = {{Stacks project authors}},
  title        = {The Stacks project},
  howpublished = {\url{https://stacks.math.columbia.edu}},
  year         = {2023},
}

@book {GortzWedhorn,
    AUTHOR = {G\"ortz, U and Wedhorn, T},
     TITLE = {Algebraic Geometry II: Cohomology of Schemes},
 PUBLISHER = {Spring Spektrum},
      YEAR = {2023}
}
\end{document}